\dmo{\injo}{inj}
\dmo{\sub}{sub}
\dmo{\aut}{aut}
\nc{\edges}{\mathsf{e}}
\nc{\verts}{\mathsf{v}}
\dmo{\DKL}{D_{KL}}
\dmo{\Meas}{M}
\nc{\fun}{f}
\nc{\sphere}{\mathbb{S}}
\dmo{\St}{St}
\nc{\ball}{\mathbb{B}}
\nc{\bla}{\bs{\lambda}}
\nc{\bmu}{\bs{\mu}}
\nc{\bu}{\bs{u}}
\nc{\bv}{\bs{v}}
\nc{\by}{{\bs{y}}}
\nc{\ba}{\bs{a}}
\nc{\bx}{\bs{x}}
\dmo{\RR}{\mathsf{UT}}
\dmo{\LL}{\mathsf{LT}}
\dmo{\Fluct}{\mathsf{Fluct}}
\dmo{\bd}{\mathsf{Max}}
\dmo{\Mat}{M}
\dmo{\Sym}{Sym}
\nc{\Lab}{L}
\nc{\cstar}{\wt{C}} 
\nc{\phigen}{\phi}
\nc{\psigen}{\psi}
\dmo{\fluct}{{fluct}}
\dmo{\Ecomplex}{E_{complex}}
\dmo{\excep}{{excep}}
\dmo{\pexcep}{P_{\excep}}
\nc{\Gnp}{{\bs{G}}}
\nc{\Anp}{{\bs{A}}}
\nc{\BBB}{{\bs{B}}}
\nc{\tAn}{{\wt{\Anp}}}
\nc{\lamR}{ {\underset{\rightarrow}{\lambda}}}
\nc{\ppower}{\kappa}
\nc{\errRL}{{\eps}}
\nc{\errRLs}{{\delta_o}}
\nc{\Verts}{n}
\nc{\Net}{N}
\dmo{\Cyc}{\mathsf{C}}
\dmo{\jay}{{\sf J}}
\dmo{\id}{{\sf I}}
\dmo{\Pol}{{\sf P}}
\nc{\rnk}{k}
\nc{\subL}[2]{{\{ #1 \le #2 \}}}
\nc{\supL}[2]{{\{ #1 \ge #2 \}}}
\nc{\cLc}{\supL h t}
\nc{\cLcd}{\supL h {t-\delta}}
\nc{\cLl}{\{h \le t\}}
\begin{document}

\title[Large deviations of subgraph counts for sparse random graphs]{Large deviations of subgraph counts \\ for sparse Erd\H os--R\'enyi graphs}

\author[N.\ Cook]{Nicholas Cook$^\ddagger$}\thanks{${}^\ddagger$Partially supported by NSF postdoctoral fellowship DMS-1606310}
 \address{$^\ddagger$Department of Statistics, Stanford University, Stanford, CA 94305}\email{nickcook@stanford.edu}
\author[A.\ Dembo]{Amir Dembo$^{\mathsection}$}\thanks{$
{}^{\mathsection}$Partially supported by NSF grant DMS-1613091}
\address{$^{\mathsection}$Department of Mathematics, Stanford University, 
Stanford, CA 94305}\email{adembo@stanford.edu}

\date{\today}

\begin{abstract}
For any fixed simple graph $H=(V,E)$ 
and any fixed $u>0$, we establish the leading order of the exponential rate function for the probability that the number of copies of $H$ in the Erd\H{o}s--R\'enyi graph $G(\Verts,p)$ exceeds its expectation by a factor $1+u$, assuming $\Verts^{-\kappa(H)}\ll p\ll1$, with $\kappa(H) = 1/(2\Delta)$, 
where $\Delta\ge 1$ is the maximum degree of $H$.
This improves on a previous result of Chatterjee and the second author, who obtained $\kappa(H)=c/(\Delta|E|)$ for a constant $c>0$. Moreover, for the case of cycle counts we can take $\kappa$ as large as $1/2$.
We additionally obtain the sharp upper tail for Schatten norms of the 
adjacency matrix, as well as the sharp lower tail for counts of graphs 
for which Sidorenko's conjecture holds.
As a key step, we establish quantitative versions of Szemer\'edi's regularity lemma and the counting lemma, suitable for the analysis of random graphs in the large deviations regime.
\end{abstract}

\subjclass[2010]{60F10, 05C80, 60C05, 60B20}
\keywords{Graph homomorphism, covering number, regularity lemma, outlier eigenvalues}

\maketitle

\let\oldtocsubsection=\tocsubsection
\renewcommand{\tocsubsection}[2]{\hspace*{.0cm}\oldtocsubsection{#1}{#2}}
\let\oldtocsubsubsection=\tocsubsubsection
\renewcommand{\tocsubsubsection}[2]{\hspace*{1.8cm}\oldtocsubsubsection{#1}{#2}}

\setcounter{tocdepth}{2}

\section{Introduction}
\label{sec:intro}

\subsection{The infamous upper tail for homomorphism counts}

Given a graph $H=(V,E)$, 
the associated \emph{homomorphism counting function} on graphs $G$ over vertex set $[\Verts]$ is given by
\begin{equation}	\label{def:homG}
\hom(H,G) := \sum_{\varphi:V\to [\Verts]} \prod_{\{u,v\}\in E} A_G(\varphi(u),\varphi(v)),
\end{equation}
where $A_G$ denotes the $\Verts \times \Verts$ adjacency matrix for $G$.
That is, $\hom(H,G)$ counts the number of edge-preserving maps from $V$ to $[n]$. 
When $H=\Cyc_\ell$, the cycle on $\ell\ge3$ vertices, we have
\[
\hom(\Cyc_\ell,G) = \Tr A_G^\ell. 
\] 
There are standard relations between homomorphism counts $\hom(H,G)$ and subgraph counts $\sub(H,G)$ (see \cite[Chapter 5]{Lovasz-book}). As an example, we have $\hom(\Cyc_3,G) = 6\sub(\Cyc_3,G)$. While the relationship is more complicated for general $H$, in the regime of sparsity we consider, up to a negligible error, they are related by an easily computed combinatorial factor. 
We thus focus hereafter 
on the (more convenient) homomorphism counts.

For $\Verts$ large and $p\in(0,1)$ possibly depending on $\Verts$, let $\Gnp\sim G(\Verts,p)$ be the Erd\H{o}s--R\'enyi random graph on vertex set $[\Verts]$.
The ``infamous upper tail problem" \cite{JaRu02} is to determine the asymptotic exponential rate function for the probability that $\hom(H,\Gnp)$ exceeds its expectation by a constant factor, that is to estimate
\begin{equation}	\label{def:Rate}
\RR_{\Verts,p}(H,u) := -\log \pr\big(\,\hom(H,\Gnp)\ge (1+u) \Verts^{\verts(H)} p^{
\edges(H)} \,\big), \quad 
 u>0 \,,
\end{equation}
where here and in the sequel $\verts(H):=|V|$, $\edges(H) :=|E| \ge 1$, with $H$ simple.

In the dense regime with $p\in (0,1)$ fixed independent of $\Verts$, Chatterjee and Varadhan established a large deviations principle for $G(\Verts,p)$, viewed as a sequence of measures on the infinite-dimensional space of \emph{graphons} with the cut metric topology \cite{ChVa11}. Since homomorphism counting functions are continuous under this topology, they could consequently
establish $\lim_{\Verts\to \infty} \frac{1}{\Verts^2} \RR_{\Verts,p}(H,u)$ as the solution to an optimization problem over graphons, which was subsequently 
analyzed in the special case $H=\Cyc_3$ by 
Lubetzky and Zhao in \cite{LuZh12}. 

Graphon theory provides a topological reformulation of the classic \emph{regularity method} in extremal graph theory, which rests on two key facts: Szemer\'edi's regularity lemma, which is related to the compactness of graphon space, and the counting lemma, which asserts the continuity of the homomorphism counting functions.

Here we are concerned with the 
sparse regime, with $p=\Verts^{-c}$ for some constant $c\in (0,1)$, which falls outside the purview of graphon theory.
Indeed, as is well known, the regularity lemma is useless in this regime of sparsity. 
In \cite[Section 11]{Chatterjee:survey} (cf.\ Open Problem 5 there), Chatterjee 
asks for a version of the regularity method suitable for the study of large deviations for sparse random graphs. 
A number of extensions of graph limit theory have been developed in recent years to accommodate sparse graphs (with edge density $o(1)$ but growing average degree); see for instance \cite{BCCZ-LP1,BCCZ-Lp2,Frenkel,Szegedy15:sparse,Janson16:sigma-finite,BCCH,NeOs18,BaSz18}. However, it seems that none of these is suitable for studying large deviations of homomorphism counts.

In the present work we develop quantitative versions of the regularity and counting lemmas that are specially tailored for applications to large deviations (see Section \ref{sec:regularity}).
In particular, we use them to obtain sharp asymptotics for the upper tail \eqref{def:Rate} for $p=\Verts^{-c}$ with $c\in (0,\kappa(H))$ for a suitable constant $\kappa(H)>0$ depending only on $H$. For the case that $H$ is a cycle, our methods yield $\kappa(H)$ which is in some sense optimal. 
The results in Section \ref{sec:regularity} are of independent interest and potentially useful for other questions in graph theory.
It is also possible that some features of our approach could be useful for problems outside graph theory, such as large deviations for the number of arithmetic progressions in sparse random sets, which have been studied in \cite{ChDe14,BGSZ16}.

The upper tail problem \eqref{def:Rate} has seen considerable activity in the past few years. Before stating our results, we review what is already known (for additional background see \cite{Chatterjee:survey}), starting with the triangle homomorphism counting function $\hom(\Cyc_3,\cdot)$. 
In this case, one easily computes 
\[
\e \hom(\Cyc_3,\Gnp) = \Verts(\Verts-1)(\Verts-2) p^3 = (1+o(1))\Verts^3p^3.
\]
(Unless stated otherwise, all asymptotic notation is with respect to the limit $\Verts\to \infty$; see Section \ref{sec:notation} for our notational conventions.)
A moment's thought yields upper bounds 
on $\RR(\Cyc_3,\cdot)$
which turn out to be asymptotically tight (at least for some range of $p$). 
Indeed, one way to create on the order of $\Verts^3p^3$ extra triangles is via the event
\begin{equation}	\label{Clique}
\text{ Clique($a$): } \qquad \text{ Vertices $1,\dots, \lf a\Verts p \rf$ form a clique,}
\end{equation}
for fixed $a>0$. The probability of this event is 
\[
\pr(\text{Clique}(a)) = p^{{\lf a\Verts p \rf \choose 2}} \ge \exp \Big\{ - \frac12a^2\Verts^2p^2\log(1/p)  \Big\} \,.
\]
On this event the clique contributes $\sim (a\Verts p)^3$ extra triangle homomorphisms (assuming $\Verts p\to \infty$ and $p=o(1)$).
Thus, taking $a=u^{1/3}$, and intersecting with the high-probability 
(and independent) event that the complement of the clique contains $(1+o(1))\Verts^3p^3$ triangles, we have
\[
\RR_{\Verts,p}(\Cyc_3,u) \le  (1+o(1))\frac{u^{2/3}}2 \Verts^2 p^2\log (1/p).
\]
We get another upper bound on $\RR_{\Verts,p}(\Cyc_3,u)$ by considering the event
\begin{equation}	\label{Hub}
\text{Hub($b$): } \qquad \text{ Vertices $1,\dots, \lf b\Verts p^2 \rf$ are connected to all other vertices}
\end{equation}
for fixed $b>0$ (assuming $p\gg \Verts^{-1/2}$).
On this event, every edge in the complement of the hub $[\lf b\Verts p^2 \rf]$ forms a triangle with every vertex in the hub, giving $\sim 3b\Verts^3p^3$ extra triangle homomorphisms (if $p=o(1)$). Taking $b= u/3$, we obtain
$
\RR_{\Verts,p}(\Cyc_3,u) \le  (1+o(1)) \frac{u}3 \Verts^2 p^2\log (1/p).
$
Thus we have
\begin{equation}	\label{R3.upper}
\RR_{\Verts,p}(\Cyc_3,u) \le (1+o(1))\min\bigg\{ \frac{u^{2/3}}{2}, \frac{u}{3}\bigg\} \Verts^2p^2\log(1/p).
\end{equation}
There is a third natural event to consider, that $\Gnp$ has on the order of $\Verts^2p$ extra edges distributed uniformly across the graph. 
Indeed, this event turns out dominate the tail event for triangle counts in
much of the dense regime (with $p$ fixed) \cite{ChVa11, LuZh12}.
However, a short computation reveals that in the sparse regime $p\to 0$ 
this event can not compete 
with the events Clique and Hub
(though, as seen in Section \ref{sec:lowertails}, it does give the 
leading order contribution for the \emph{lower} tails for certain functions).

Lower bounds on $\RR_{\Verts,p}(\Cyc_3,t)$ (that is, upper bounds on upper tail for triangle counts in $\Gnp$) have a long history in the literature. 
Using the machinery of polynomial concentration, Kim and Vu showed \cite{KiVu04}
\[
\RR_{\Verts,p}(\Cyc_3,u) \,\gs_u\, \Verts^2p^2
\]
for all $p\ge \Verts^{-1}\log\Verts$ and $u>0$, which matches the upper bound \eqref{R3.upper} up to the factor $\log (1/p)$; analogous results for general sub-graphs $H$ were obtained in \cite{JOR04}.
The missing logarithm was found in work of Chatterjee \cite{Chatterjee12-triangles} and DeMarco and Kahn \cite{DeKa12:triangles}, who showed
\[
\RR_{\Verts,p}(\Cyc_3,u) \asymp_u \Verts^2 p^2 \log(1/p)
\]
for all $p\ge \Verts^{-1}\log\Verts$ and $u>0$.

The focus then shifted to the asymptotic dependence of $\RR(\Cyc_3,\cdot)$ 
on $u$, i.e.\ to find a formula for $c_3(u)$ such that for any fixed $u>0$, 
\[
\RR_{\Verts,p}(\Cyc_3,u)\sim c_3(u) \Verts^2p^2\log (1/p) \,.
\]
A breakthrough was made in \cite{ChDe14}, which introduced a general \emph{nonlinear large deviations} framework, and as an application showed that 
\begin{equation}	\label{main-goal}
\RR_{\Verts,p}(H,u) \sim \upphi_{\Verts,p}(H,u),\qquad \Verts^{-\kappa(H)}\ll p\ll1,
\end{equation}
for some constant $\kappa(H)>0$ depending only on $H$, where $\upphi_{\Verts,p}(H,u)$ 
is the solution of the variational problem  \eqref{def:phi}.
While the asymptotic \eqref{main-goal} is expected to hold with $\kappa(H)=1/\Delta$, where $\Delta=\Delta(H)$ is the maximum degree of $H$
(see \cite[Open Problem 4]{Chatterjee:survey}), 
the proof of \cite{ChDe14} gave only $\kappa(H)=c/(\Delta|E|)$
for some absolute constant $c>0$.
Even for $H=C_3$ they only got $\kappa(\Cyc_3)=1/42+\eps$.
The latter was improved to $\kappa(\Cyc_3)=1/18+\eps$ by Eldan,
as a consequence of general advances in the theory of nonlinear large deviations \cite{Eldan-NMF}. Combined with the solution in \cite{LuZh14} of 
\eqref{def:phi} for $H=\Cyc_3$, this
gave a matching lower bound for \eqref{R3.upper} in the range
$\Verts^{-1/18+\eps}\le p\ll1$.

In Theorem \ref{thm:hom} below we obtain \eqref{main-goal} with $\kappa(H)=1/(2\Delta)$ (with a wider range for irregular graphs), which drastically improves the previous bound $\kappa(H)=c/(\Delta|E|)$ for general $H$. Moreover, for the case of cycles we obtain the essentially optimal exponent $\kappa(\Cyc_\ell)=1/2+\eps$ for $\ell\ge 4$, and $\kappa(\Cyc_3)=1/3$; see Corollary \ref{cor:cycles.ut}.
Our general approach also gives bounds for the \emph{lower tail} 
for counts of cycles and of graphs having the ``Sidorenko" property, as well as the Schatten norms of the random adjacency matrix $A_\Gnp$ 
(see \eqref{def:Schatten}).

\subsection{Results for homomorphism counts}

Our main result shows that $\RR_{n,p}(H, u)$ of \eqref{def:Rate} is asymptotically given by the solution of a certain entropic variational problem, which we now formulate. As we also treat other functionals besides homomorphism counts, we begin with the general setup.
For $d\in \N$,  $x\in [0,1]^d$ and $p \in [0,1]$ denote
\begin{align}\label{def:Ipx}
I_p(x) 
& := \sum_{i=1}^d \Big[ x_i \log\frac{x_i}p + (1-x_i) \log \frac{1-x_i}{1-p} \Big] \,,
\end{align}
which is the Kullback--Leibler divergence $\DKL(\mu_x\|\mu_p)$ between the 
(product) Bernoulli measures with centers of mass $x=(x_i)$ 
and $p=(p,\dots,p)$ (we make the convention $0\log 0:=0$).
Set for any $\cE\subseteq \R^d$,
\begin{align}\label{def:IpT}
I_p(\cE) & := \inf\{ I_p(x): x \in \cE\cap [0,1]^d\}\,.
\end{align}
For $h:[0,1]^d\to \R$ and $t\in \R$, the upper- and lower-tail entropic variational problems 
are denoted 
\begin{align}\label{def:phigen}
\phigen_p(h,t)  := I_p(\cLc) &= \inf\big\{ \, I_p(x): x\in [0,1]^d,\, h(x) \ge t\, \big\},\\
\psigen_p(h,t)  := I_p( \cLl) &= \inf\big\{ \, I_p(x): x\in [0,1]^d,\, h(x) \le t\, \big\}.
\label{def:psigen}
\end{align}

Given a graph $H=(V,E)$, the homomorphism counting function of \eqref{def:homG} 
extends to symmetric $\Verts\times\Verts$ matrices $X$ as follows:
\begin{equation}	\label{def:homX}
\hom(H,X) := \sum_{\varphi: V\to [\Verts]} \prod_{e=\{v,w\}\in E} X_{\varphi(v)\varphi(w)}.
\end{equation}
(When $E=\emptyset$ we take the empty product to be 1, so that $\hom(H, X)= \Verts^{\verts(H)}$ in this case.)
We denote by $\cX_\Verts$ the set of all symmetric $\Verts\times\Verts$ matrices with entries in $[0,1]$ and zeros on the diagonal, and by $\cA_\Verts\subset\cX_\Verts$ the set of adjacency matrices for graphs on $[\Verts]$ vertices. 
We will usually (but not always) deal with the objects \eqref{def:phigen}--\eqref{def:psigen} 
with $h=\Verts^{-\verts(H)} p^{-\edges(H)}\hom(H,\cdot)$ for some fixed $H$, where we take $d={\Verts\choose2}$ and identify $[0,1]^d \cong \cX_\Verts$.
In this case we have
\begin{equation}	\label{def:IpX}
I_p(X) = \sum_{1\le i<j\le \Verts} I_p(x_{ij})\,,\qquad X=(x_{ij})\in \cX_\Verts\,,
\end{equation}
and the
variational problems are denoted
\begin{align}
 \upphi_{\Verts,p}(H,u)  &:= \inf \Big\{\, I_p(X): X\in \cX_\Verts, \; \hom(H, X)\ge (1+u) \Verts^{\verts(H)}p^{\edges(H)} \,\Big\}	\,,\quad u>0, \label{def:phi}\\	
\uppsi_{\Verts,p}(H,u) &:= \inf \Big\{\, I_p(X): X\in \cX_\Verts, \; \hom(H, X)\le (1-u) \Verts^{\verts(H)}p^{\edges(H)} \,\Big\}\,, \quad 0<u\le1.	\label{def:psi}
\end{align}

Our results here and in Subsections 
\ref{sec:Schatten1}--\ref{sec:lowertails}
establish the above 
quantities as the large deviation rate
for the upper and lower tails, respectively, of
the associated homomorphism counts in $\Gnp$,
with the corresponding expressions for
Schatten norms of the adjacency matrix. 

Previous works \cite{ChDe14, Eldan-NMF} have shown 
that \eqref{main-goal} holds
with $\ppower(H)=c/(\Delta|E|)$ for some constant $c>0$.
For our main result, we show the above holds with $\ppower(H)$ as small as  
$1/(2\Delta)$, bringing $\ppower(H)$ within a factor two
of the conjectured threshold $1/\Delta$. In fact, our exponent is expressed in terms of the following, 
generally smaller, quantity:
\begin{equation}\label{def:Deltas}
\Delta_\star(H) := 
\frac{1}{2} \max_{\{v_1,v_2\} \in E}
\big\{ \deg_H(v_1) + \deg_H(v_2)
\big\}  \ge 1 \,.
\end{equation}
Note that $\Delta(H)+1\le 2\Delta_\star(H)\le 2\Delta(H)$, where the first bound is tight (for instance) when $H$ is a star on $\Delta+1$ vertices and the second is tight when $H$ is $\Delta$-regular.
\begin{theorem}[Main result]
\label{thm:hom}
For any fixed, non-empty, simple graph $H$ and $u>0$, if $p=p(n)$ satisfies
\begin{equation}	\label{prange.hom}
\Verts^{-1} \log\Verts \ll p^{2\Delta_\star (H)}\ll1,
\end{equation}
with $\Delta_\star(H)$ as in \eqref{def:Deltas}, then 
\begin{equation}	\label{UT.hom}
\RR_{\Verts,p}(H,u) = (1+o(1)) \upphi_{\Verts,p}(H,u +o(1)).
\end{equation}
\end{theorem}
Our next result gives asymptotics for $\RR_{\Verts,p}(H,u)$ in a wider range of $p$ for the case of cycles $H=\Cyc_\ell$, reaching the essentially optimal range $\ppower(\Cyc_\ell) = 1/2+\epsilon$ when $\ell\ge 4$. 
We additionally obtain asymptotics for the lower tail event; analogously to \eqref{def:Rate} we denote the lower tail functions
\begin{align}	\label{def:LL}
\LL_{\Verts,p}(H,u) &:= -\log\pr\big( \hom(H,\Gnp)\le (1-u) \Verts^{|V|} p^{|E|}\big)\,.
\end{align}
Theorem \ref{thm:cycles.quant} provides a quantitative version of the following result.
\begin{theorem}[Large deviations for cycles counts, qualitative version]
\label{thm:cycles}
Fix an integer $\ell > 2$.
If $0<p\ll1$ satisfies
\begin{equation}	\label{prange.cyc.upper}
p \gg \max\bigg(\Verts^{\frac{2}{\ell}-1},\frac{ (\log\Verts)^{\frac{\ell}{2\ell-4} }}{\sqrt{\Verts}}\bigg)\,,
\end{equation}
then for any fixed $u>0$,
\begin{align}\label{eq:upper-tail.cyc}
\RR_{\Verts,p}(\Cyc_\ell,u)\ge \upphi_{\Verts,p}\big(\Cyc_\ell,  u-o(1)\big) +o(\Verts^2p^2\log(1/p)) .  
\end{align}
If
\begin{equation}	\label{prange.cyc.lower}
\left( \frac{\log \Verts}\Verts\right)^{\frac{\ell-2}{2\ell-2}} \ll p\ll1,
\end{equation}
then for any fixed $0<u\le1$, 
\begin{align}\label{eq:lower-tail.cyc}
\LL_{\Verts,p}(\Cyc_\ell,u) \ge \uppsi_{\Verts,p}\big(\Cyc_\ell, u - o(1)\big) +o(\Verts^2p) .
\end{align}
\end{theorem}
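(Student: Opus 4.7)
The plan is to apply the nonlinear large deviations framework via a covering argument, specialized to cycles by exploiting that $\hom_{C_\ell}(X) = \Tr X^\ell$ is a spectral functional. For the upper tail, the guiding heuristic is a quantitative spectral regularity lemma: on the event $\hom_{C_\ell}(A_\GNP) \ge t N^\ell p^\ell$, the matrix $A_\GNP$ must carry a bounded number of outlier eigenvalues above the typical bulk spectral scale $\sqrt{Np}$ of $A_\GNP - pJ$, while the remaining bulk contribution concentrates. The constraint $p \gg N^{2/\ell-1}$ makes this dichotomy operational: above this scale, the $\ell$th power of an order-$Np$ outlier dominates the bulk contribution $N(Np)^{\ell/2}$, so a bounded number of outliers suffices to realize the deviation. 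The extra polylog factor in \eqref{prange.cyc.upper} ensures that $\|A_\GNP - pJ\|_{\mathrm{op}} = O(\sqrt{Np})$ holds with sufficiently small failure probability.

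Concretely, I would fix small parameters $\delta,\eta \to 0$ and proceed as follows. First, partition the upper-tail event by the rank $r$ of the spectral projector $\Pi$ of $A_\GNP$ onto eigenvalues exceeding a threshold $K\sqrt{Np}$; outside a high-probability event governed by bulk spectral-norm concentration, one can take $r = O_{t,\ell,\delta}(1)$. Second, build a net $\mathcal{N}$ of symmetric rank-at-most-$r$ matrices with entries in $[-1,1]$ quantized at scale $\delta$, of cardinality $\log|\mathcal{N}| = O(rN\log(1/\delta))$; this is $o(N^2 p^2 \log(1/p))$ precisely when $p \gg N^{-1/2}$, which is enforced by \eqref{prange.cyc.upper}. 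Third, for each $Y \in \mathcal{N}$, consider the event that the outlier part $\Pi A_\GNP \Pi$ is within $\delta$ of $Y$ in operator norm. Conditional on this, the entries of $A_\GNP$ are close in $I_p$-entropy to an independent-Bernoulli product with tilted means $X_Y \in \cX_N$ defined by a mean-field fixed-point associated with $Y$, and a Chernoff / Donsker--Varadhan estimate bounds the conditional probability by $\exp(-I_p(X_Y) + o(N^2 p^2 \log(1/p)))$. Since $X_Y$ realizes $\hom_{C_\ell}(X_Y) \ge (t - o(1))N^\ell p^\ell$, one has $I_p(X_Y) \ge \upphi(C_\ell,N,p,t-o(1))$, and a union bound over $\mathcal{N}$ delivers \eqref{eq:upper-tail.cyc}.

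The lower tail is structurally different: a small value of $\Tr A_\GNP^\ell$ forces the entire spectrum to be depressed, and no finite-rank outlier reduction is available. Instead, I would cover the sub-level set $\{X \in \cX_N: \hom_{C_\ell}(X) \le (t + o(1)) N^\ell p^\ell\}$ directly by an entrywise quantization of $[0,1]^{\binom{N}{2}}$ on a constant-spacing grid, of log-cardinality $O(N^2)$. The Lipschitz bound $|\hom_{C_\ell}(X) - \hom_{C_\ell}(X')| = O(N^\ell p^{\ell-1} \|X - X'\|_\infty)$ determines the admissible grid spacing, and the entropy cost is absorbed into the slack $o(N^2 p)$ precisely when $p \gg ((\log N)/N)^{(\ell-2)/(2\ell-2)}$. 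On each quantization cell a Chernoff estimate gives probability at most $\exp(-I_p(Y))$, and minimizing over quantized $Y$ with $\hom_{C_\ell}(Y) \le (t + o(1)) N^\ell p^\ell$ produces $\exp(-\uppsi(C_\ell,N,p,t+o(1)))$.

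The principal obstacle is the outlier covering step of the upper-tail argument: producing $\mathcal{N}$ with the right entropy/approximation trade-off and showing that approximating the rank-$r$ spectral part of $A_\GNP$ at scale $\delta$ pins down $\hom_{C_\ell}(A_\GNP)$ to within $o(N^\ell p^\ell)$. This needs quantitative spectral stability of the form $|\Tr Y^\ell - \Tr Y'^\ell| \le \ell\, \|Y - Y'\|_{\mathrm{op}} \max(\|Y\|_{\mathrm{op}},\|Y'\|_{\mathrm{op}})^{\ell-1}$ together with sharp control of the bulk part $A_\GNP - \Pi A_\GNP \Pi$, which is where the sparsity bound in \eqref{prange.cyc.upper} enters decisively. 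Identifying the sharp variational minimizer $X_Y$ from the conditional law of $A_\GNP$ given its outlier spectrum, so that the net entropy does not spoil the exponent, is where this paper's spectral notion of complexity improves on the Gaussian-width complexity of \cite{ChDe14,Eldan-NMF} and extends the allowed sparsity range.
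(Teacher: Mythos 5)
Your proposal captures the right high-level heuristic for the upper tail---rank-reduced spectral approximation plus a covering argument---but two of its key steps diverge from what actually works, and the lower-tail argument has a fatal entropy mismatch.

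On the upper tail, you claim that outside a spectral-concentration event one may take the rank $r$ of the outlier projector to be $O_{t,\ell,\delta}(1)$. This is too optimistic. The exceptional set must be cut out at probability $\exp(-\omega(N^2p^2\log(1/p)))$, and at that scale one cannot guarantee $O(1)$ eigenvalues above $K\sqrt{Np}$; there may be many eigenvalues at intermediate scales $\sqrt{Np}\ll|\lambda_j|\ll Np$. The paper's Proposition \ref{prop:tail.refined} is precisely the tool needed here: on a set of probability $1-4N\exp(-cK^2N^2p^2)$ one gets $\sum_{j>R}|\lambda_j|^\ell \lesssim N^{1+\ell/2}p^{\ell/2}+(KNp)^\ell/R^{\ell/2-1}$, and to make the second term $o((Np)^\ell)$ with $K\sim(\log(1/p))^{1/2}$ one must take $R$ polylogarithmic in $N$ (in the proof, $R=(W^4\log N)^{\ell/(\ell-2)}$). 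Bounding the rank by a constant both loses the intermediate outliers and misreports the entropy budget. Separately, your step of passing to a ``mean-field fixed point $X_Y$'' and a Donsker--Varadhan estimate for the conditional law of $A_\GNP$ given its outlier spectrum is not how the paper closes the argument, and it is not clear it could be made rigorous at the claimed sparsity. What the paper actually uses is far more elementary: each covering piece $\cB_\by$ is convex, and Proposition \ref{prop:cvx} gives $\mu_p(\cB_\by)\le\exp(-I_p(\cB_\by))$ with no tilting and no conditional analysis. The whole point of the convex-body covering (equation \eqref{def:By}) is to avoid conditioning; one just unions the convexity bound and uses the fluctuation control (Claim \ref{claim2}) to lower-bound each $I_p(\cB_\by)$ by $\phigen_{h,p}(t-\delta)$.

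On the lower tail, your proposed covering is by entrywise quantization of $[0,1]^{\binom{N}{2}}$ at constant spacing, of log-cardinality $\Theta(N^2)$. But the required error term is $o(N^2p)$, and $N^2\ne o(N^2p)$ for $p\ll 1$; this covering is nowhere near fine enough in the entropy sense. The stated Lipschitz bound $|\hom_{C_\ell}(X)-\hom_{C_\ell}(X')|=O(N^\ell p^{\ell-1}\|X-X'\|_\infty)$ also fails for general $X,X'\in\cX_N$ (whose entries are of order $1$, not $p$): the actual Lipschitz constant in $\|\cdot\|_\infty$ scales like $N^\ell$, not $N^\ell p^{\ell-1}$. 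The paper instead reuses the \emph{same} spectral covering as for the upper tail but excises a larger exceptional event (via Lemma \ref{lem:AHS} / the set $\cE_\HS(K)$ of probability $\exp(-\Theta(K^2N^2p))$), which is permissible because the lower-tail rate $\Theta(N^2p)$ is larger than the upper-tail rate. This tighter entropy budget and the need for a larger $R$ are exactly why the admissible range of $p$ degrades to \eqref{prange.cyc.lower}.
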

\begin{remark}
Asymptotically matching upper bounds in \eqref{eq:upper-tail.cyc} and \eqref{eq:lower-tail.cyc} can be obtained by modification of the tilting argument in \Cref{sec:hom.UB}. Skipping this here, we note that at least for \eqref{eq:upper-tail.cyc} such an upper bound immediately follows from consideration of the clique and hub events and \Cref{thm:BGLZ} below; cf.\ \Cref{cor:cycles.ut}.
\end{remark}
\begin{remark} 
Ignoring the log factors, the exponent $\frac{\ell-2}{2\ell-2} =\frac12 -\frac{1}{2\ell-2} $ of $\Verts$ in \eqref{prange.cyc.lower} asymptotically matches the exponent $1/2$ in \eqref{prange.cyc.upper} as $\ell\to \infty$. 
For the case of even $\ell$, Theorem \ref{thm:Schatten.lower} extends
\eqref{eq:lower-tail.cyc} to hold \emph{for all} $p=p(\Verts)\in (0,1)$.
For the case $\ell=3$, whereas \eqref{prange.cyc.lower} enforces $p\gg ((\log\Verts)/\Verts)^{1/4}$, recent independent work of Kozma and Samotij \cite{KoSa} establishes \eqref{eq:lower-tail.cyc} for $p\gg \Verts^{-1/2}$.
\end{remark}

\begin{remark}	\label{rmk:Augeri}
In the independent work \cite{Augeri18}, posted to arXiv shortly after the first version of this paper, Augeri obtains \eqref{eq:upper-tail.cyc} for all $\ell\ge3$ and $p \gg  (\log\Verts)^2/\sqrt{\Verts}$. Whereas her result is
an outcome of a general advance on large deviations for nonlinear functions on product spaces having the \emph{low-complexity gradient} condition used in \cite{ChDe14, Eldan-NMF}, in the present work 
we avoid this condition. Augeri's
improvement over our result for $\ell=3$ does not stem from the low-complexity gradient approach; rather, she
eliminates the first term in the maximum in \eqref{prange.cyc.upper} by relying on strong concentration of the empirical spectral measure of $\Anp$ around its even, semi-circle, limit. 
Indeed, we can recover
her improvement for $\ell=3$, without appealing 
to a low-complexity gradient,
by replacing our use of the Schatten norm $\|\Anp\|_{S_3}$ to 
control the bulk contribution with
the concentration results from \cite[Proof of Lemma 4.1]{Augeri18}.
\end{remark}

\begin{remark}
As the quantitative version Theorem \ref{thm:cycles.quant} shows, we can allow $\ell=\ell(\Verts)$ to grow at rate $(\log\Verts)^{o(1)}$, and we can further allow $\ell\sim (\log\Verts)^C$ for fixed $C<\infty$ at the expense of increasing the power of the logarithmic corrections in the lower bounds on $p$ by $O(C)$ 
(as can be seen from \eqref{bd:large-ell}, taking $W(\Verts)$ to grow poly-logarithmically).
\end{remark}

Theorems \ref{thm:hom} and \ref{thm:cycles} show the upper and lower tails \eqref{def:Rate} and \eqref{def:LL} are asymptotically given (or bounded) by the respective variational problems \eqref{def:phi} and \eqref{def:psi}.
The following result of \cite{BGLZ16} (extending the earlier work \cite{LuZh14} for the case of cliques), solves the upper tail variational problem \eqref{def:phi} in a wide range of values of $p$.
\begin{theorem}[\!\!\cite{BGLZ16}]
\label{thm:BGLZ}
Fixing a connected graph $H=(V,E)$, let 
$H^\star=H[V^\star]$ denote the induced subgraph on the subset 
$V^\star \subseteq V$ of vertices of maximal degree $\Delta \ge 2$
(so $H^\star=H$ when $H$ is regular). 
For $u>0$ let $\theta_H(u)$ be the unique $\theta>0$ satisfying
$\Pol_{H^\star}(\theta)=1+u$,
for the independence polynomial $\Pol_{H^\star}(x) = 1+ \sum_{k=1}^{|V^\star|} 
a_k x^k$, where $a_k$ counts the
independent sets of size $k$
in $H^\star$. 
For $\Verts^{-1/\Delta}\ll p\ll1$
and any fixed $u> 0$,
\[
\upphi_{\Verts,p}(H,u) = (c_H(u)+o(1))\Verts^2p^\Delta\log(1/p) \,
\]
where
\begin{equation}	\label{def:cH}
c_H(u):=
\begin{cases} 
\min\{ \theta_H(u), \frac12u^{2/|V|}\} & \text{ if $H$ is regular},\\
\theta_H(u)&	\text{ if $H$ is irregular}.
\end{cases}
\end{equation}
\end{theorem}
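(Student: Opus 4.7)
The plan is to establish the variational identity by matching explicit constructions against a structural lower bound, extending to general $H$ the clique-versus-hub dichotomy visible already in \eqref{R3.upper}.

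\textbf{Constructions for the upper bound.} Two families of test matrices suffice. For the \emph{hub} construction, fix $\beta = \theta_H(u)$ and a set $B\subseteq[N]$ of size $\lfloor \beta N p^\Delta\rfloor$, and let $X_B(i,j)=1$ when $\{i,j\}\cap B\neq \emptyset$ and $X_B(i,j)=p$ otherwise. A direct computation gives $I_p(X_B)=(1+o(1))\beta N^2 p^\Delta\log(1/p)$. To compute $\hom_H(X_B)$, group the maps $\varphi$ by $S:=\varphi^{-1}(B)\subseteq V$: writing $d(S)$ for the number of edges of $H$ with at least one endpoint in $S$ and $e(S)$ for the edges inside $S$, the normalized contribution from class $S$ equals $\beta^{|S|}p^{\Delta|S|-d(S)}$. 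Since $\Delta|S|-d(S) = \sum_{v\in S}(\Delta-\deg_H v) + e(S) \ge 0$, only the classes with equality survive as $p\to 0$, namely $S \subseteq V^\star$ independent in $H^\star$. Summing over such $S$ recovers $P_{H^\star}(\beta) = 1+u$. For the \emph{clique} construction (regular $H$ only), a block $C$ of size $\lfloor u^{1/|V|}N p^{\Delta/2}\rfloor$ set to $1$ costs $(1+o(1))\tfrac12 u^{2/|V|} N^2 p^\Delta\log(1/p)$, and maps with $\varphi(V)\subseteq C$ contribute $u N^{|V|}p^{|E|}$ using $2|E|=\Delta|V|$; in the irregular case the same construction loses a positive power of $p$ and is discarded. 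Picking the cheaper option yields $\upphi(H,N,p,1+u)\le (c_H(u)+o(1)) N^2 p^\Delta\log(1/p)$.

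\textbf{Lower bound via regularization.} Given any $X\in\cX_N$ satisfying the homomorphism constraint with $I_p(X)\le K N^2 p^\Delta\log(1/p)$, the aim is $K \ge c_H(u)-o(1)$. Writing $X = pJ+Y$ off-diagonal and expanding $\hom_H(X)$ over subsets $F\subseteq E$ reduces the task to bounding each term $T_F(Y):=\sum_\varphi \prod_{e\in F} Y_{\varphi(e)}$. The strategy is to round $X$ to a hub-plus-clique template in three steps: (i) threshold the entries at dyadic scales and absorb small deviations into a bulk where $X$ is close to $pJ$, handled by standard concentration; (ii) identify a ``hub'' $B$ of rows of atypically large sum, for which the saturation cost $I_p(1)=\log(1/p)$ forces $|B| = O(K N p^\Delta)$; (iii) in the regular case, extract any remaining near-saturated block $C$ satisfying $\binom{|C|}{2}\log(1/p) = O(I_p(X))$, giving $|C| = O(\sqrt{K}\,N p^{\Delta/2})$. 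Generalized H\"older/Finner inequalities, applied term-by-term using the edge-vertex incidence structure of $H$, then show that the cross- and bulk-contributions to each $T_F(Y)$ are negligible.

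\textbf{Combinatorial core and main obstacle.} The same identity underlying the upper bound drives the lower bound: a map $\varphi$ with $\varphi^{-1}(B)=S$ contributes a gain factor of $p^{\Delta|S|-d(S)}$, which vanishes in the limit unless $S\subseteq V^\star$ is independent in $H^\star$. Consequently, $\hom_H(X)/(N^{|V|}p^{|E|})$ reduces asymptotically to $P_{H^\star}(|B|/(Np^\Delta))$ plus a clique contribution $(|C|/(Np^{\Delta/2}))^{|V|}$ in the regular case, and driving this quantity to at least $1+u$ at minimum entropy cost yields exactly the formula $c_H(u)$. The main obstacle is the rigorous execution of the regularization together with the Finner-type bounds for general $H$: isolating hub, clique, and bulk cleanly and suppressing cross-interactions requires a carefully tuned dyadic decomposition together with a precise accounting of how each edge of $H$ distributes its weight across the three regions. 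The irregular case avoids the clique step entirely and reduces to a one-parameter optimization in $|B|/(Np^\Delta)$.
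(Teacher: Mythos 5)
The paper does not prove Theorem~\ref{thm:BGLZ}; it imports it verbatim from \cite{BGLZ16}, so there is no ``paper's own proof'' against which to check you. Judging your outline on its own terms against what \cite{BGLZ16} actually does: your upper bound (the two test matrices, the grouping of $\varphi$ by $S=\varphi^{-1}(B)$, the identity $\Delta|S|-d(S)=\sum_{v\in S}(\Delta-\deg_H v)+e(S)\ge 0$ with equality exactly for independent sets in $H^\star$, the cancellation of $\Delta|V|/2-|E|$ in the regular clique case) is correct and is indeed the construction used there.

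The lower bound, however, is where the entire difficulty of \cite{BGLZ16} lives, and your sketch has real gaps. First, you say small deviations in the bulk are ``handled by standard concentration,'' but $X$ is a fixed matrix in $\cX_N$ subject to an entropy budget, not a random object, so concentration is not the mechanism --- what must be used is that a low-$I_p$ matrix cannot have many entries far above $p$, together with a delicate rounding argument. Second, ``Finner-type bounds applied term-by-term'' is the step that actually carries the proof; in \cite{BGLZ16} this is an intricate induction over the edge set of $H$ together with an adaptive decomposition of $X$ that simultaneously peels off hub rows and a near-complete core, and naming the inequality does not constitute the argument. Third, and most concretely, you never justify why the optimal $(\beta,\kappa)$ lies at a corner: after reducing $\hom_H(X)/(N^{|V|}p^{|E|})$ to something like $P_{H^\star}(\beta)+\kappa^{|V|}$ you must still show that minimizing $\beta+\tfrac12\kappa^2$ over this constraint yields $\min\{\theta_H(u),\tfrac12 u^{2/|V|}\}$ rather than a strict mixture; this corner-optimality is a separate lemma in \cite{BGLZ16} and is not automatic. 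As written the proposal is a correct heuristic road map with the right answer, but the lower-bound portion would not compile into a proof without substantially more work.
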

Combining Theorems \ref{thm:hom} and \ref{thm:BGLZ} yields the asymptotic formula
\begin{equation}	\label{thms.combined}
\frac{\RR_{\Verts,p}(H,u) }{\Verts^2p^{\Delta(H)}\log(1/p)}\longrightarrow c_H(u), \qquad \Verts^{-1}\log \Verts\ll p^{2\Delta_\star(H)}\ll1
\end{equation}
for any fixed, connected graph $H$ with $\Delta(H)\ge2$ and fixed $u>0$. The next corollary details the improved result for the case of cycles.
For $H=\Cyc_\ell$ the functions $c_\ell(u):=c_{\Cyc_\ell}(u)$ from \eqref{def:cH} can be computed using the recursion
\begin{equation}	\label{PCell}
\Pol_{\Cyc_2}(x) = 1+ 2x, \quad \Pol_{\Cyc_3}(x) = 1+3x, \quad \Pol_{\Cyc_\ell}(x) = \Pol_{\Cyc_{\ell-1}}(x) + x
\Pol_{\Cyc_{\ell-2}}(x), \quad \ell \ge 4\,.
\end{equation}
For instance, we have
\[
c_3(u) =\begin{cases} \frac13u & u\le 27/8\\ \frac12 u^{2/3} & u\ge 27/8\end{cases}, 
\qquad 
c_4(u) = \begin{cases} -1+ \sqrt{1+ \frac12u} & u\le 16\\  \frac12\sqrt{u} & u\ge 16.\end{cases}
\]
From Theorems \ref{thm:cycles} and \ref{thm:BGLZ} we have the following.
\begin{cor}[Upper tail for cycle counts]
\label{cor:cycles.ut}
Fix an integer $\ell\ge 3$ and let $0<p\ll1$ be as in \eqref{prange.cyc.upper}.
For any fixed $u>0$, 
\begin{equation}	\label{pcyc.upper}
\RR_{\Verts,p}(\Cyc_\ell,u) = (c_\ell(u)+o(1)) \Verts^2p^2\log(1/p),
\end{equation}
where 
\begin{equation}
c_\ell(u) = \min\bigg\{ \theta_\ell(u), \frac12 u^{2/\ell}\bigg\}
\end{equation}
and $\theta_\ell(u)$ is the unique $\theta>0$
such that
 $\Pol_{\Cyc_\ell}(\theta) = 1+u$ for $\Pol_{\Cyc_\ell}(\cdot)$ of \eqref{PCell}.
\end{cor}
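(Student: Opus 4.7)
The plan is to derive the corollary as a direct combination of the upper bound on the tail probability from Theorem \ref{thm:cycles} with the evaluation of the variational problem $\upphi$ from Theorem \ref{thm:BGLZ}, complemented by a matching lower bound on the probability via explicit planting.

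\textbf{Upper bound on the probability (lower bound on $\RR$).} I would first apply Theorem \ref{thm:cycles} with $t = 1+u$ to obtain
$$\RR(C_\ell, N, p, u) \ge \upphi\bigl(C_\ell, N, p, 1 + u - o(1)\bigr) - o(N^2 p^2 \log(1/p)).$$
Since $C_\ell$ is regular with $\Delta = 2$, and since \eqref{prange.cyc.upper} ensures $p \gg N^{-1/2} = N^{-1/\Delta}$ (directly via the logarithmic term when $\ell \ge 4$, and because $N^{-1/3} \gg N^{-1/2}$ when $\ell = 3$), Theorem \ref{thm:BGLZ} applies with $H^\star = C_\ell$ and $|V| = \ell$, yielding
$$\upphi\bigl(C_\ell, N, p, 1 + u - o(1)\bigr) = \bigl(c_\ell(u - o(1)) + o(1)\bigr) N^2 p^2 \log(1/p).$$
Passing from $u - o(1)$ to $u$ requires continuity of $c_\ell$, which follows from continuity of $\theta_\ell$: by the recursion \eqref{PCell}, the polynomial $P_{C_\ell}$ has nonnegative coefficients with a positive coefficient on $\mu$, hence $\mu \mapsto P_{C_\ell}(\mu) - 1$ is strictly increasing on $[0,\infty)$ and its inverse $\theta_\ell$ is continuous.

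\textbf{Lower bound on the probability (upper bound on $\RR$).} The matching bound comes from planting, following the two competing constructions foreshadowed by \eqref{Clique} and \eqref{Hub} in the triangle case. In the clique branch, planting a clique on $\lfloor u^{1/\ell} Np \rfloor$ vertices yields $\sim u N^\ell p^\ell$ additional cycle homomorphisms and has probability at least $\exp\bigl(-(\tfrac{1}{2} u^{2/\ell} + o(1)) N^2 p^2 \log(1/p)\bigr)$, giving the $\tfrac{1}{2} u^{2/\ell}$ branch of $c_\ell(u)$. In the hub-type branch, a planting corresponding to the independent-set structure of $C_\ell$ (as constructed in \cite{BGLZ16}) yields the $\theta_\ell(u)$ branch. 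The maximum of the two probabilities is a lower bound on $\pr(\hom_{C_\ell}(\GNP) \ge (1+u) N^\ell p^\ell)$, giving the matching bound $(c_\ell(u) + o(1)) N^2 p^2 \log(1/p)$ on $\RR$.

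\textbf{Main obstacle.} There is no genuine obstacle specific to this corollary; the substantive work is encapsulated in the two input theorems. The only technical care needed is the continuity argument for $c_\ell$ together with the verification that \eqref{prange.cyc.upper} entails the sparsity hypothesis $p \gg N^{-1/\Delta}$ of Theorem \ref{thm:BGLZ}, both of which are routine.
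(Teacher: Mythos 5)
Your proposal is correct and follows exactly the same route as the paper: combine the tail-probability upper bound of Theorem \ref{thm:cycles} with the evaluation of $\upphi$ in Theorem \ref{thm:BGLZ} for one direction, and use Clique/Hub plantings (following \cite[Proposition 2.4]{BGLZ16}) for the other. The paper presents these two steps more tersely, but the extra details you spell out — continuity of $c_\ell$ via monotonicity of $P_{C_\ell}$, and the check that \eqref{prange.cyc.upper} forces $p\gg N^{-1/2}=N^{-1/\Delta}$ — are precisely the routine points implicitly used there.
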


\begin{proof}
The lower bound on $\RR(\cdot)$
in \eqref{pcyc.upper} is an immediate consequence of Theorems \ref{thm:cycles} and \ref{thm:BGLZ}. 
The matching upper bound is established similarly to \eqref{R3.upper} by consideration of the events Clique($a$) and Hub($b$) for appropriate $a=a'(\ell)
u^{1/\ell} $ and $b=b'(\ell)u $, essentially following the lines of the proof of \cite[Proposition 2.4]{BGLZ16}.
\end{proof}

We note that for $\Verts^{-1}\ll p\ll \Verts^{-1/2}$ the upper tail no longer has the form of the right hand side of \eqref{pcyc.upper} -- 
see \cite[Section 1.3]{BGLZ16} for further discussion of this.
(In particular, the event Hub in \eqref{Hub} is no longer viable in this regime of sparsity.) The variational problem \eqref{def:phi} was solved in the regime $\Verts^{-2/\Delta}\ll p\ll \Verts^{-1/\Delta}$ for the case of regular graphs in \cite{LuZh14, BGLZ16}.
For $p \ll \Verts^{-1/\Delta}$ and general $H$, even the order of $\RR_{\Verts,p}(H,u)$ up to constants depending only on $u$ has not been completely settled. Indeed, the conjectured dependence on $H,\Verts,p$ from 
\cite{DeKa12:cliques} has recently been refuted in certain cases, see \cite{SiWa18}
and the references therein on the rich history of this problem.

\begin{remark}[Improvements for the case of regular graphs]
\label{rmk:HMS1}
Since the posting of \cite{Augeri18} and the present work to arXiv, subsequent works \cite{HMS19} and 
its refinement in \cite{BaBa20},
have obtained the asymptotic \eqref{thms.combined} for the upper tail of subgraph counts $\sub(H,\Gnp)$ rather than homomorphism counts, for the optimal range 
$\Verts^{-1}(\log\Verts)^{\frac1{\verts(H)-2}} \ll p^{\Delta/2}\ll 1$ 
(excluding 
$\Verts p^\Delta = \Theta(1)$),
in the case that $H$ is a connected \emph{regular} graph with $\Delta \ge 2$ (a different asymptotic 
was obtained in \cite{HMS19} for such $H$ in the \emph{Poisson regime} 
$\Verts^{-1} \ll p^{\Delta/2}\ll \Verts^{-1}(\log\Verts)^{\frac1{\verts(H)-2}}$).
We note that \eqref{thms.combined} still gives the best range of $p$ for general $H$. 

Roughly speaking, the improved sparsity range obtained in \cite{HMS19,BaBa20} comes from an efficient covering of 
the collection of graphs $G$ for which $\hom(H,G) \ge (1+u) \Verts^{\verts(H)} p^{\edges(H)}$ with simple events, which they achieve for regular $H$ by technical arguments involving many non-trivial facts from graph theory. 
By contrast, our approach is closer in spirit to the regularity method, utilizing efficient coverings for \emph{all}  
graphs 
(see \Cref{sec:approach} for further discussion). 
As a result, our results apply to general $H$, and in fact to any functional of $\Gnp$ that is sufficiently continuous (in a quantitative sense) in an appropriate topology  -- in particular we cover non-polynomial functions such as Schatten norms and top eigenvalues of the adjacency matrix -- whereas the approach of \cite{HMS19} appears to be limited to low-degree polynomials of Bernoulli variables.
On the other hand, while we can allow $\Gnp$ to be much sparser than the classic regularity method can handle, our approach generally leads to sub-optimal ranges of $p$.
See Remark \ref{rmk:HMS2} for further comparison of the methods introduced in \cite{HMS19} with our approach.
\end{remark}

\subsection{Results for Schatten (and operator) norms}
\label{sec:Schatten1}

Denote by $\Anp=A_{\Gnp} \in \cA_\Verts$ the (random) adjacency matrix for $\Gnp\sim G(\Verts,p)$ and recall the Schatten norms 
\begin{equation}	\label{def:Schatten}
\|X\|_{S_{\alpha}} = \Big( \sum_{j=1}^\Verts |\lambda_j(X)|^\alpha \Big)^{1/\alpha}, \quad \alpha\in [1,\infty] \,,
\end{equation}
defined in terms of the eigenvalues of $X$. Clearly,
$\hom(\Cyc_{2\ell},X) = \|X\|_{S_{2\ell}}^{2\ell}$, so
Theorem \ref{thm:cycles} gives large deviations bounds for 
the Schatten norms $\|\Anp\|_{S_\ell}$ of even order $\ell \ge 4$.
An inspection of the proof of Theorem \ref{thm:cycles} reveals that 
with slight modifications our argument applies also
to Schatten norms of any order above two, yielding our next result.

\begin{prop}[Large deviations, Schatten norms]
\label{prop:Schatten}
Replacing $(\Verts p)^{-\ell} \hom(\Cyc_\ell, \Gnp)$ 
in \eqref{def:Rate}
and \eqref{def:LL} by $(\Verts p)^{-1} \|\Anp\|_{S_\alpha}$,
the conclusion of Theorem \ref{thm:cycles} holds with $\ell \in \N$ 
changed to 
$\alpha \in (2,\infty)$,
while $\upphi_{\Verts,p}(\Cyc_\ell, s-1)$ and $\uppsi_{\Verts,p}(\Cyc_\ell, 1-s)$
replaced by $\phigen_p(\|\cdot\|_{S_\alpha},\Verts q)$ and 
$\psigen_p(\|\cdot\|_{S_\alpha},  \Verts q)$,
respectively, for $s=q/p$ fixed.
\end{prop}

\begin{remark} Theorem \ref{thm:Schatten.lower} below
dramatically improves the range of $p$ for the
lower tail in Proposition \ref{prop:Schatten}.
As for the upper tail, while $\|\Anp\|_{S_2}
$ 
reduces to tail estimates for the binomial distribution, note that 
$\E \|\Anp\|_{S_2} \asymp \sqrt{\e\|\Anp\|_{S_2}^2} =\sqrt{\Verts(\Verts-1)p} \gg \Verts p$ and the upper tail exponential decay rate is then $\Verts^2 p$ 
(unlike for $\alpha > 2$). It is also easy
to check that 
$\E \|\Anp\|_{S_\alpha} \ggs 
\Verts^{1/\alpha} \sqrt{\Verts p} \gg \Verts p$ whenever
$\Verts^{-1} \ll p \ll \Verts^{2/\alpha-1}$, with  
the upper tail large deviations of $\|\Anp\|_{S_\alpha}$ exhibiting a qualitative 
transition as $p$ crosses $\Verts^{2/\alpha-1+o(1)}$.
\end{remark}

In \Cref{cor:cycles.ut} the matching lower bound for the upper tail of cycle counts is due to the asymptotic solution of the variational problem $\upphi_{\Verts,p}(\Cyc_\ell, u)$ provided by \Cref{thm:BGLZ}. Whereas the analogous result for $\phigen_p(\|\cdot\|_{S_\alpha}, \Verts q)$ is lacking,
we do get such matching bounds for
$\alpha=\infty$, namely for the upper tail of
the Perron--Frobenius eigenvalue $\lambda_1(\Anp)=\|\Anp\|_\op$
and further bound the upper tail decay for
$\lambda_2(\Anp)$ -- the eigenvalue of second-largest modulus.
\begin{prop}
\label{thm:lam12}
For $\Verts^{-1/2}\ll p\le 1/2$ and fixed $s=q/p>1$,
\begin{equation}	\label{PF.tail}
- \log\pr(\|\Anp\|_\op \ge \Verts q) = (1+o(1)) \phigen_p\big(\|\cdot\|_\op\,, (1+o(1))\Verts q\big).
\end{equation}
Moreover, for $\Verts^{-1}\log \Verts\lls p\le 1/2$ and any
$t\gg \sqrt{\Verts}$,
\begin{align}	\label{lam2.tail}
- \log \pr( \lambda_2(\Anp) \ge t ) &\ge 
- \log \pr\big( \| \Anp - p \1\1^\tran \|_\op \ge t \big) 
\nonumber \\ &\quad 
=  (1+o(1))  \phigen_p\big(\|\cdot\, - p \1\1^\tran \|_\op, \,
t+o(t) \big) \,.
\end{align}
\end{prop}

\begin{remark} The upper bounds on the \abbr{lhs} of
\eqref{PF.tail} and \eqref{lam2.tail}, hold  
up to $p \ggs \Verts^{-1}\log\Verts$ and $t \ggs C\sqrt{\Verts p}$, respectively.
By eigenvalue interlacing 
$\| \Anp - p \1\1^\tran \|_\op \ge \lambda_2(\Anp)$, 
trivially yielding the inequality in \eqref{lam2.tail}, where 
one may further replace $p \1 \1^\tran$ by
$\e \Anp = p (\1\1^\tran -\id)$.
\end{remark}

\begin{remark} In \cite{GuHu18}, Guionnet and Husson establish
a large deviations principle for the largest eigenvalue of 
$\Verts$-dimensional Wigner matrices, 
re-scaled by $\sqrt{\Verts}$, 
whose independent, standardized entries have uniformly 
sub-Gaussian \abbr{mgf}-s
(allowing for Rademacher entries). However, such uniform 
sub-Gaussian domination does not apply to
$\Anp-\e \Anp$ when $p=o(1)$. 
Indeed, \cite{GuHu18} concerns deviations of the largest eigenvalue at the scale $\sqrt{\Verts}$ of the bulk spectral distribution, whereas \eqref{lam2.tail} is about larger deviations (we expect
\eqref{lam2.tail} to fail for $t\asymp \sqrt{\Verts p}$).
\end{remark}

Motivated by the posting of this work on arXiv,  
Bhattacharya and Ganguly \cite{BhGa18} compute the asymptotics of the 
right hand sides of \eqref{PF.tail} and \eqref{lam2.tail}, showing that for $\Verts^{-1/2}\ll p\ll 1$, 
\begin{equation}	\label{BhGa1}
\phigen_p\big(\|\cdot\|_\op\,, \Verts q\big) \sim 
\min\Big\{ \frac12, \, 1-\frac{p}{q} \Big\}\Verts^2 q^2\log(1/p), \qquad 
s=q/p>1 \text{ fixed},
\end{equation}
and
\begin{equation}	\label{BhGa2}
\phigen_p\big( \|\cdot\,-p\1\1^\tran\|_\op, \, \Verts q \big) \sim \frac12 \Verts^2 q^2\log(1/p), \qquad  \Verts^{-1/2} \ll q \ls p\,.
\end{equation}
In particular, the probability for deviations of $\|\Anp\|_\op$ is controlled, up to sub-exponential factors, by the events Clique and Hub from \eqref{Clique}, \eqref{Hub} (as reflected by the two expressions in the minimum in \eqref{BhGa1}).
On the other hand, for deviations of $\|\Anp - p\1\1^\tran\|_\op$ only the clique construction contributes to leading order.
Together with Proposition \ref{thm:lam12}, this completely solves the large deviation problem for the norms of $\Anp$ and $\Anp-p\1\1^\tran$ for these regimes of 
$p,q$.

\subsection{Sharp lower tails for homomorphism counts and Schatten norms} \label{sec:lowertails}

In this subsection we consider two families of functions of $\Gnp$ for which we can obtain the sharp lower tail for a wide range of $p$. Moreover, we provide an explicit formula for the tail, which in both cases is asymptotically given by ${\Verts\choose2}I_p(q)$, the relative entropy of the distribution $G(\Verts,q)$ with respect to $G(\Verts,p)$, for an appropriate $q<p$. In particular, the rate matches the log-probability 
that the edge density is uniformly lowered from $p$ to $q$.
This contrasts with our ``broken symmetry" results for the upper tail, where the rate asymptotically matches the log-probability for a 
small planted structure (see \eqref{Clique} and \eqref{Hub}).

For our first such result we recall some notation from graph limit theory. 
Consider
the space $\cW$ of all bounded symmetric measurable functions $f:[0,1]^2\to \R$, 
and for a simple graph $H=(V,E)$ define the associated \emph{homomorphism density} functional
\begin{equation}	\label{def:tH}
t(H,\cdot): \cW\to \R, \qquad t(H,f) := \int_{[0,1]^V}\prod_{kl\in E} f(x_k, x_l) \prod_{k\in V} dx_k \,.
\end{equation}
This extends to $\cW$
the homomorphism counting functionals \eqref{def:homX}.
Indeed, associating to each 
$X\in \cX_\Verts$ the element
$f_X\in \cW$ with 
\begin{equation}	\label{fX}
f_X(x,y) = X_{\lceil \Verts x\rceil, \lceil \Verts y\rceil}
\end{equation}
it follows that 
$t(H,f_X) = \Verts^{-|V|} \hom(H, X)$.
A simple graph $H=(V,E)$ is 
\emph{Sidorenko} 
if
\begin{equation}	\label{Sidorenko}
t(H,f) \ge t(K_2,f)^{|E|}\,,   \quad \qquad \forall f \in \cW \, , f \ge 0 \, .
\end{equation}
It was conjectured by Erd\H{o}s and Simonovits \cite{Simonovits84} and Sidorenko \cite{Sidorenko} that all bipartite graphs are Sidorenko.
While the conjecture remains open as of this writing, \eqref{Sidorenko} has been established for complete bipartite graphs, trees and even cycles \cite{Sidorenko}, hyper-cubes \cite{Hatami10} and bipartite graphs with a vertex complete to the other side \cite{CFS:Sidorenko}, among others; see the recent works \cite{Szegedy:Sidorenko,CKLL} and references therein for further results.
In the following theorem we provide a lower bound for the lower tail of $\hom(H,\Gnp)$, valid for \emph{any} simple graph $H$, and show that this bound is tight if $H$ is Sidorenko. 
In particular, conditional on Sidorenko's conjecture, \eqref{Sido:lower} and \eqref{Sido:upper} provide the sharp lower tail for homomorphism counts of any bipartite graph.

\begin{theorem}[Lower tail, Sidorenko graphs] 
\label{thm:Sidorenko}
Let $H=(V,E)$ be a finite, simple, graph. 
If $\Verts^{-1/(2\Delta_\star-1)}\ll p\le 1/2$ 
(for $\Delta_\star$ as in Theorem \ref{thm:hom}), 
then 
fixing $q/p \in (0,1)$ and setting ${\widehat q}:=q-q/\Verts$, we have when $\Verts \to \infty$, 
\begin{equation}	\label{Sido:lower}
\pro{ \hom(H,\Gnp) \le {\widehat q}^{~ \edges(H)} \Verts^{\verts(H)}} \ge 
e^{ - (1+o(1)) {\Verts\choose2} I_p(q)} \,. 
\end{equation}
Moreover, if $H$ is Sidorenko, then for
any $0<q<p<1$ and $\Verts \in \N$, 
\begin{equation}	\label{Sido:upper}
\pro{ \hom(H,\Gnp) \le {\widehat q}^{~ \edges(H)} \Verts^{\verts(H)} } \le 
e^{-{\Verts\choose2} I_p(q)} \,.
\end{equation}

\end{theorem}

\begin{remark}
We stress that the upper bound \eqref{Sido:upper} is \emph{non-asymptotic}, applying for 
any fixed $\Verts$ and $0<q<p<1$; thus, if $\Verts$ is an asymptotic 
parameter then $p$ and $q$ can depend in an arbitrary way on $\Verts$. The same goes for \eqref{Schatten.upper} below.
\end{remark}

\begin{remark}
Such bounds for Sidorenko graphs $H$ are derived for the regime of fixed $0< q < p < 1$ 
in \cite{LuZh12}, and in \cite{Zhao:lower} for general $H$,
when $\Verts^{-a_H}\le p\ll1$ and $\bar{s}_H<q/p<1$ for some $\bar{s}_H\in (0,1)$ 
and an extremely 
small $a_H>0$. Moreover, \cite[Prop.\ 3.5]{LuZh12} shows that conditional on 
the event $\{\hom(H,\Gnp)\le q^{\edges(H)} \Verts^{\verts(H)}\}$, the corresponding graphon $f_\Anp$ is close in cut-norm to the constant $q\in \cW$.
\end{remark}

\begin{remark}
Previous works considered the lower tail for \emph{subgraph counts} $\sub(H,\Gnp)$.
For $p\gg \Verts^{-1/\Delta(H)}$, with high probability $\sub(H,\Gnp)$ and $\hom(H,\Gnp)$ differ by a non-random, fixed
factor ($\sim \aut(H)$, the number of graph automorphisms of $H$). 
In contrast, $\sub(H,\Gnp)$ and $\hom(H,\Gnp)$ have substantially different behavior for smaller $p$.
For general $H$, \cite{JaWa} obtains upper and lower bounds for the lower tail $\log\pr(\sub(H,\Gnp)\le (1-\eps)\e\sub(H,\Gnp))$ matching up to a constant factor, whereas Theorem \ref{thm:Sidorenko} obtains the sharp lower tail (with asymptotically matching upper and lower bounds) for Sidorenko graphs. 
Such a sharp lower tail is obtained in \cite[Theorem 3]{JaWa}  
for a wide class of graphs $H$ including \emph{2-balanced} graphs, 
but only in a regime of sufficiently small $p=p(\Verts)$ that does not
overlap with Theorem \ref{thm:Sidorenko}.
\end{remark}

For convex functions of $\Anp$, such as the Schatten norms of Proposition \ref{prop:Schatten}, we can obtain strong results for the lower tail
via the following special case of Proposition \ref{prop:cvx}.

\begin{prop}
\label{prop:lower-cvx}
Fix $\Verts\in \N$, $h:\cX_\Verts\to \R$ and $p\in (0,1)$.
If $t\in \R$ is such that the sub-level set $\{X\in \cX_\Verts: h(X)\le t\}$ is 
convex, then
\begin{equation}\label{non-asymp-convex}
\pro{ h(\Anp)\le t} \le \expo{ - \psigen_p(h,t)} \,.
\end{equation}
\end{prop}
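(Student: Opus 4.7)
The plan is to deploy a Gibbs variational identity combined with subadditivity of entropy against the product Bernoulli reference, and to use the convexity of the sublevel set to force the conditional mean back inside the sublevel set. Let $\mu$ denote the law of $\ANP$, which is the product of $\binom{N}{2}$ independent $\mathrm{Bernoulli}(p)$ coordinates indexed by the upper-triangular edge slots, and set $K := \{X\in\cX_N : h(X)\le t\}$. We may assume $\mu(K)>0$, else \eqref{non-asymp-convex} is trivial.

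First, I would invoke the standard identity
\[
-\log\mu(K) \;=\; \DKL\bigl(\mu_K\,\|\,\mu\bigr),
\]
where $\mu_K := \mu(\,\cdot\,|\,K)$ is the conditional law, with Radon--Nikodym derivative $\mathbf{1}_K/\mu(K)$ against $\mu$. The key analytic input is then the following entropy inequality: for any law $\nu$ on $\cA_N$, setting $m_{ij} := \e_\nu[X_{ij}]$ and $M := (m_{ij})$, one has
\[
\DKL(\nu\,\|\,\mu) \;\ge\; \sum_{1\le i<j\le N} I_p(m_{ij}) \;=\; I_p(M).
\]
A direct computation shows that the gap between the two sides equals $\sum_{i<j} H\bigl(\mathrm{Bern}(m_{ij})\bigr) - H(\nu)$, which is nonnegative by subadditivity of Shannon entropy — exactly because the reference measure $\mu$ is a product.

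To conclude, I would apply this inequality with $\nu = \mu_K$. Its mean $M = \e[\ANP\mid \ANP \in K]$ inherits symmetry, zero diagonal, and $[0,1]$-valued entries from $\ANP$, so $M\in \cX_N$; crucially, $M\in K$, since $M$ is a convex combination of elements of $K\cap\cA_N$ and $K$ is convex by hypothesis. Chaining the two displays then gives
\[
-\log\mu(K) \;=\; \DKL(\mu_K\,\|\,\mu) \;\ge\; I_p(M) \;\ge\; \inf_{X\in K} I_p(X) \;=\; \psigen(h,N,p,t),
\]
which is the desired estimate.

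The substantive step is the subadditivity calculation, which is a routine one-line identity. The convexity hypothesis on the sublevel set enters in exactly one place, namely to guarantee that the conditional mean $M$ lands back in $K$; this is the only obstacle to overcome, but it is built into the assumption. No sparsity, asymptotic, or covering-number considerations arise, so the resulting bound is genuinely non-asymptotic, which is why convex functionals admit such a clean lower-tail estimate in contrast with the cycle-counting setting.
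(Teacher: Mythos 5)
Your proof is correct, and it takes a genuinely different route from the paper's. The paper derives Proposition~\ref{prop:lower-cvx} as an immediate special case of Proposition~\ref{prop:cvx}, which it in turn cites from Dembo--Zeitouni (Eq.~(4.5.6) and Exercise~2.2.23(b)); the underlying textbook argument proceeds via the Chernoff bound for half-spaces together with a convex-duality reduction (a closed convex set is an intersection of half-spaces, and the exponential rate is attained at a supporting hyperplane). You instead give a self-contained entropy argument: the identity $-\log\mu(K)=\DKL(\mu_K\|\mu)$, the inequality $\DKL(\nu\|\mu)\ge I_p(M)$ for product $\mu$ with $M$ the mean of $\nu$ (whose gap is exactly $\sum_{i<j}H(\mathrm{Bern}(m_{ij}))-H(\nu)\ge 0$ by subadditivity of Shannon entropy), and the observation that convexity of $K$ forces the conditional mean $M=\e[\ANP\mid\ANP\in K]$ back into $K$, so $I_p(M)\ge\psigen(h,N,p,t)$. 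This is the ``naive mean field is exact for product reference measures, with subadditivity giving an inequality against the conditional mean'' philosophy that the paper alludes to in its introduction but does not deploy here. A minor bonus of your route is that it makes no use of closedness of the sub-level set (the paper's cited Proposition~\ref{prop:cvx} assumes a closed convex set, and one would need a small continuity remark to drop closedness when applying it, whereas your argument bypasses the issue entirely). Both approaches are correct and non-asymptotic; yours is arguably more elementary and more in keeping with the paper's stated conceptual framework.
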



Proposition \ref{prop:lower-cvx} applies to any semi-norm of $\Anp$. Here,
we consider the lower tail for Schatten norms $\|\Anp\|_{S_\alpha}$, showing  
in particular that the leading order is 
the same for all $\alpha\in (2,\infty]$ and $\Verts^{\frac{2}{\alpha}-1}\ll p\le 1/2$ (for smaller $p$ there may be
slack in \eqref{Schatten.upper}).

\begin{theorem}[Lower tail, Schatten norms]
\label{thm:Schatten.lower}
For $0<q<p<1$, $\alpha \in [1,\infty]$ and $\Verts\in \N$, 
\begin{equation}	\label{Schatten.upper}
\pro{ \|\Anp\|_{S_\alpha} \le q (\Verts-1)} \le 
e^{-\psigen_p\big( \|\cdot\|_{S_\alpha},  q (\Verts-1)\big)} 
\le 
e^{- {\Verts\choose2} I_p(q) }
\, . 
\end{equation}
Moreover, if $\alpha \in (2,\infty]$ and $p=p(\Verts)$ satisfies 
$1/2 \ge p \gg \Verts^{\frac{2}{\alpha}-1}$ as $\Verts\to \infty$
(taking $p(\Verts) \gg \log \Verts/\Verts$ for $\alpha=\infty$),
then for fixed $s:=q/p\in (0,1)$, we have
\begin{equation}	\label{Schatten.lower}
\pro{ \|\Anp\|_{S_{\alpha}} \le q(\Verts-1)} \ge e^{-(1+o(1)) {\Verts\choose2} I_p(q)} \,. 
\end{equation}
\end{theorem}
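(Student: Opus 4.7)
The upper bound in \eqref{Schatten.upper} I would obtain by combining Proposition \ref{prop:lower-cvx} with a Jensen-type estimate. Since each Schatten norm is a norm (hence convex) on symmetric matrices, its sublevel sets are convex, and Proposition \ref{prop:lower-cvx} applied with $h=\|\cdot\|_{S_\alpha}$ and threshold $q(N-1)$ yields the first inequality. The second inequality reduces to showing $\uppsi(\|\cdot\|_{S_\alpha},N,p,q(N-1))\ge \binom{N}{2} I_p(q)$. Given any $X\in \cX_N$ with $\|X\|_{S_\alpha}\le q(N-1)$ one has $\|X\|_{S_\alpha}\ge \|X\|_{S_\infty}$, and applying the Rayleigh quotient along $\mathbf{1}/\sqrt{N}$ gives
\[
q(N-1)\;\ge\;\|X\|_{S_\infty}\;\ge\;\frac{\langle X\mathbf{1},\mathbf{1}\rangle}{N}\;=\;\frac{2}{N}\sum_{i<j}X_{ij}\,.
\]
The mean off-diagonal entry $\bar x:=\binom{N}{2}^{-1}\sum_{i<j}X_{ij}$ is therefore at most $q<p$, and Jensen's inequality applied to the convex function $I_p$ together with the monotonicity of $I_p$ on $[0,p]$ yields $I_p(X)\ge \binom{N}{2} I_p(\bar x)\ge \binom{N}{2} I_p(q)$.

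For the matching lower bound \eqref{Schatten.lower}, I would run a standard change-of-measure argument with a small downward tilt of the edge density. Set $\tilde q:=q(1-\delta_N)$ for a sequence $\delta_N\to 0$ to be chosen, and let $\tilde{\mathbb{P}}$ denote the law of $\tilde A$, the adjacency matrix of $G(N,\tilde q)$. The log-likelihood ratio
\[
\log\frac{d\mathbb{P}}{d\tilde{\mathbb{P}}}(\tilde A)\;=\;e(\tilde A)\log\frac{p}{\tilde q}+\Big(\tbinom{N}{2}-e(\tilde A)\Big)\log\frac{1-p}{1-\tilde q}
\]
equals $-(1+o(1))\binom{N}{2} I_p(\tilde q)$ on the Chernoff event $\{|e(\tilde A)-\tilde q\tbinom{N}{2}|\le \eta_N \tilde q \tbinom{N}{2}\}$ for any $\eta_N\to 0$ with $\eta_N^2 \tilde q\binom{N}{2}\to\infty$; this event has $\tilde{\mathbb{P}}$-probability $1-o(1)$. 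A Taylor expansion gives $I_p(\tilde q)=(1+O(\delta_N |\log s|))I_p(q)$ as $\delta_N\to 0$, so the theorem reduces to exhibiting $\delta_N\to 0$ for which
\[
\tilde{\mathbb{P}}\bigl(\|\tilde A\|_{S_\alpha}\le q(N-1)\bigr)\;=\;1-o(1)\,.
\]

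This spectral concentration is the main obstacle. I would decompose $\tilde A=\tilde q(J-I)+W$, where $J$ is the all-ones matrix and $W$ is a centered sparse Wigner-type matrix with entrywise variance $\tilde q(1-\tilde q)$. Weyl's inequality together with standard rigidity/edge results for the extremal eigenvalues of sparse Erd\H os--R\'enyi adjacency matrices (this is where the hypotheses $p\gg N^{2/\alpha-1}$, and for $\alpha=\infty$ the stronger $p\gg(\log N)^6/N$, enter) yield $\lambda_1(\tilde A)=\tilde q(N-1)+O(\sqrt{\tilde q N})$ with $\tilde{\mathbb{P}}$-probability $1-o(1)$, while the remaining eigenvalues of $\tilde A$ lie within $O(\sqrt{\tilde q N})$ of $-\tilde q$. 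For $\alpha=\infty$ non-negativity of $\tilde A$ gives $\|\tilde A\|_{S_\infty}=\lambda_1(\tilde A)$ directly, while for finite $\alpha\in(2,\infty)$
\[
\|\tilde A\|_{S_\alpha}^\alpha\;\le\;\lambda_1(\tilde A)^\alpha+O\bigl(N(\sqrt{\tilde q N})^\alpha\bigr)\;=\;\lambda_1(\tilde A)^\alpha\Bigl(1+O\bigl(N(\tilde q N)^{-\alpha/2}\bigr)\Bigr),
\]
and the bracket is $1+o(1)$ exactly when $\tilde q N\gg N^{2/\alpha}$. Choosing $\delta_N\to 0$ slowly enough that $\delta_N \tilde q N$ dominates both $\sqrt{\tilde q N}$ and the resulting bulk correction $o(\lambda_1)$ (which is possible since both relative errors tend to zero under the hypotheses on $p$), one obtains $\|\tilde A\|_{S_\alpha}\le \tilde q(N-1)+o(\delta_N q N)<q(N-1)$ with $\tilde{\mathbb{P}}$-probability $1-o(1)$, completing the argument.
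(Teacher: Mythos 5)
Your proof is correct and follows essentially the same route as the paper's: the upper bound combines Proposition~\ref{prop:lower-cvx} with the Jensen/monotonicity argument exactly as the paper does, and the lower bound is a tilt to edge density roughly $q$ paired with spectral concentration for the tilted sparse adjacency matrix. The paper's implementation is slightly leaner at two points: it packages the tilt via Jensen's inequality applied to $\int_{\cB_r}\log\frac{d\mu_r}{d\mu_p}\,d\mu_r$ (needing only $\mu_r(\cB_r)\ge\tfrac12$ from Markov, rather than a $1-o(1)$ statement), and it bounds $\|\tilde A\|_{S_\alpha}\le r\|\jay_N\|_{S_\alpha}+\|\tilde A-r\jay_N\|_{S_\alpha}$ in one triangle-inequality step with $\|\cdot\|_{S_\alpha}\le N^{1/\alpha}\|\cdot\|_{\op}$, rather than separating $\lambda_1$ from the bulk and recombining the powers. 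One small correction to your parenthetical: the hypothesis $p\gg N^{2/\alpha-1}$ is not what makes the edge/rigidity bounds hold (those need only $p\gg(\log N)^{O(1)}/N$, which is why the separate condition appears for $\alpha=\infty$); it is what ensures the bulk contribution $N\cdot O((\tilde q N)^{\alpha/2})$ is $o(\lambda_1^\alpha)$, as you correctly use later in the argument.
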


\begin{remark} Whereas even-length cycles are Sidorenko \cite{Sidorenko},
taking $\alpha=2\ell\in2\N$ in \Cref{thm:Schatten.lower}
improves upon the range $p \gg \Verts^{-1/3}$ required for $H=C_{2\ell}$
in \Cref{thm:Sidorenko}.
\end{remark}

\subsection{Organization of the paper}
In Section \ref{sec:ideas} we briefly overview previous results in the theory of nonlinear large deviations based on low-complexity gradient conditions, and describe our new approach based on covering constructions and continuity.
In Section \ref{sec:approach} we motivate our spectral approach to covering and continuity arguments, starting with a very short proof for the sharp upper tail for triangle counts in the regime $((\log\Verts)/\Verts)^{1/8}\ll p\ll1$ (which already surpasses all previous works).  
We then state our versions of the regularity and counting lemmas for sparse random graphs, 
which we prove in Section \ref{sec:reg-proof}, aided by
the preliminary control on the spectrum of $\Anp$ that we establish
in Section \ref{sec:spec}. 
We apply our regularity and counting lemmas to
prove \Cref{thm:hom} in Section \ref{sec:hom}. In Section \ref{sec:cycles} we prove \Cref{thm:cycles} 
as a direct
consequence of the non-asymptotic version, \Cref{thm:cycles.quant};
the necessary modifications to obtain Proposition \ref{prop:Schatten} are given in Section \ref{sec:Schatten}. 
Lastly, we prove \Cref{thm:lam12} in Section \ref{sec:PF} and
Theorems \ref{thm:Sidorenko} and \ref{thm:Schatten.lower} in Section \ref{sec:lower}.

\subsection{Notation and conventions}\label{sec:notation}

\subsubsection*{Asymptotic notation} $~$
Unless otherwise stated, $C,C',C_o,c,$ etc.\ denote universal constants; if they depend on parameters we indicate this by writing e.g.\ $C_\kappa,C(H)$.
The notations $f=O(g)$, $f\ls g$ and $g\gs f$ are synonymous to
having $|f|\le Cg$ for some universal constant $C$, while 
$f=\Theta(g)$ and $f\asymp g$ mean $f\ls g\ls f$.
We indicate dependence of the implied constant on parameters (such as $H$ or $u$) with subscripts, e.g.\ $f\ls_Hg$. The statements
$f=o(g)$, $g=\omega(f)$, $f\ll g$, $g\gg f$ are synonymous to having
$f/g\to 0$ as $\Verts\to \infty$, where
the rate of convergence may depend on fixed parameters 
such as $H$ and $u$ without being indicated explicitly.
While our results use the qualitative $o(\cdot)$ notation, in the proofs we often give quantitative estimates with more explicit dependence on fixed parameters for the sake of clarity.
We assume throughout that $\Verts\ge 2$ (so that $\log \Verts \gs 1$).

\subsubsection*{Matrices and normed spaces}
We endow $\R^\Verts$ with the $\ell^\Verts_q$ norms $\|\cdot\|_q$, $q\in [1,\infty]$ 
and for $q=2$ the Euclidean inner product $\langle\cdot,\cdot\rangle$, denoting by
$\ball_q(r)$ the corresponding closed balls of radii $r \ge 0$, while
$\sphere^{d-1}$ stands for the unit Euclidean sphere in $\R^d$.
We write $\1=\1_\Verts\in \R^\Verts$ for the all-ones vector and $\id=\id_\Verts$ for the $\Verts\times\Verts$ identity matrix.
For a set $\Omega$ we write
$\Sym_\Verts(\Omega)$ for the set of symmetric $\Verts\times\Verts$ matrices with entries in $\Omega$, and $\Sym_\Verts^0(\Omega)\subset \Sym_\Verts(\Omega)$ for the subset of symmetric matrices with zeros along the diagonal. 
For $1\le \rnk \le \Verts$ we write
$
\Sym_{\Verts,\rnk }(\Omega)\subset \Sym_\Verts(\Omega)
$
for the subset of elements of rank at most $\rnk $. 
We abbreviate
\begin{equation}
\cX_\Verts:= \Sym_\Verts^0([0,1]), \qquad \cA_\Verts:= \Sym_\Verts^0(\{0,1\})
\end{equation}
as these sets will appear frequently.
When invoking Corollary \ref{cor:covering} we implicitly identify the above sets with $[0,1]^{{\Verts\choose2}}$ and $\{0,1\}^{{\Verts\choose2}}$, respectively.
Note that $\cA_\Verts$ is the set of adjacency matrices for simple (and undirected) graphs on $\Verts$ vertices.
Throughout we let $\Anp\in \cA_\Verts$ denote the adjacency matrix of $\Gnp\sim G(\Verts,p)$,
with $\mu_p(\cdot)= \pr(\Anp\in \cdot)$ the corresponding product Bernoulli measure on $\cA_\Verts$. We denote the adjacency matrix for the complete graph on $\Verts$ vertices by
\begin{equation}	\label{def:jay}
\jay= \jay_\Verts := \1\1^\tran - \id_\Verts \in \cA_\Verts.
\end{equation}

We label the eigenvalues of an element $X\in \Sym_\Verts(\R)$ in non-increasing order of modulus:
\begin{equation}	\label{eval.order}
|\lambda_1(X)|\ge |\lambda_2(X)|\ge \cdots\ge |\lambda_\Verts(X)| 
\end{equation}
and recall the Schatten norms on $\Sym_\Verts(\R)$ 
as in \eqref{def:Schatten}.
In particular, the spectral norm $\|X\|_{S_\infty}=|\lambda_1(X)|$ equals
the $\ell_2^\Verts\to \ell_2^\Verts$ operator norm 
\[
\|X\|_\op = \sup_{u\in \sphere^{\Verts-1}}\|Xu\|_2 = \sup_{u\in \sphere^{\Verts-1}} \langle u, Xu\rangle \,.
\]
Moreover, $\|X\|_{S_2}$ equals the Hilbert--Schmidt norm for
the inner product
\[
\langle X,Y\rangle_\HS = \Tr(XY), \qquad \|X\|_\HS = (\Tr X^2)^{1/2} \,,
\]
with the closed Hilbert--Schmidt ball in $\Sym_\Verts(\R)$ 
of radius $t$ denoted by $\ball_\HS(t)$. 
By the non-commutative 
H\"older inequality, whenever $1/\alpha+1/\beta=1/\gamma$,
\begin{equation}	\label{Holder}
\|XY\|_{S_{\gamma}} \le \|X\|_{S_\alpha}\|Y\|_{S_\beta} 
\end{equation}
(see \cite[Theorem 2.8]{Sim05b}), 
and in particular
\begin{equation}	\label{Schatten.op}
\|XY\|_{S_\alpha} \le \|X\|_\op\|Y\|_{S_\alpha}\,.
\end{equation}
For $X\in \Sym_\Verts(\R)$ having spectral decomposition
\[
X= \sum_{j=1}^\Verts \lambda_j u_ju_j^\tran\,,
\]
with eigenvalues arranged as in \eqref{eval.order}, and for any
$1 \le \rnk \le \Verts$, we further have that
\begin{equation}	\label{def:cutoff}
X=X_{\le \rnk } + X_{>\rnk }, \qquad
X_{\le \rnk }:= \sum_{j\le \rnk } \lambda_j u_ju_j^\tran, \qquad  X_{>\rnk } 
:= \sum_{j>\rnk } \lambda_ju_ju_j^\tran \,.
\end{equation}

\subsubsection*{Graph theory}
All graphs are assumed to be simple (without self-loops or multiple edges) and finite. 
For a graph $H=(V,E)$ we write $V(H)=V$, $E(H)=E$, $\verts(H)=|V|,$ and $\edges(H)=|E|$. 
We say that a graph $H$ is \emph{nonempty} if $E(H)\ne \emptyset$.
For $v\in V(H)$, $\deg_H(v)$ denotes the degree of $v$, and
$\Delta(H):=\max_{v\in V(H)} \{ \deg_H(v) \}$
denotes the maximum degree of $H$. We often take $V=[\verts]$. We use
$F\le H$ to mean that $F$ is a subgraph of $H$ (obtained by removing 
some of the vertices and/or edges of $H$). We further write
$F \preccurlyeq  H$
when $F$ is an induced subgraph of $H$ (i.e.\ $F=H[V']$ for some 
$V'\subseteq V(H)$), and 
$F \prec H$ if $F \preccurlyeq H$ 
and $F \ne H$.

\section{Relation to previous works and new ideas}
\label{sec:ideas}

Previous work on 
nonlinear large deviations 
focused on 
approximating the partition function $Z$ for Gibbs measures 
on the Hamming cube.
Specifically, given a Hamiltonian $f:\{0,1\}^d\to \R$ with associated Gibbs measure $\mu$  
of
density 
$Z^{-1} e^{f(\cdot)}$ on $\{0,1\}^d$
the aim is to approximate 
\[
Z = \sum_{x\in\{0,1\}^d} \exp(f(x)).
\]
This generalizes the problem of determining the large deviations 
of a function $h$ of a vector $\bx\in \{0,1\}^d$ with i.i.d.\ Bernoulli($p$) components, i.e.\ of approximating 
\begin{equation}\label{eq:tail-apx}
\log \pr( h(\bs{x}) \ge t\e h(\bs{x}) ) \,,
\end{equation}
which corresponds to $\log Z$ for
\begin{equation}\label{eq:f-of-h}
f_h (x) := g(h(x)) + d\log(1-p) + \sum_{i=1}^d x_i \log\frac{p}{1-p} \,,
\end{equation}
where $g(s)\equiv0$ for $s\ge t\e h(\bs{x})$ and $g(s)\equiv-\infty$ for $s<t\e h(\bs{x})$.

The Gibbs variational principle frames the log-partition function (or Helmholtz free energy) as the solution to a variational problem:
\[
\log Z =  \sup_{\nu\in\Meas_1(\{0,1\}^d)} \Big\{ \sum_x f(x)\nu(x) - \sum_x \nu(x) \log \nu(x) \Big\},
\]
where the supremum ranges over all probability measures on the cube.
The feasible region for optimization has dimension exponential in $d$; to reduce dimensionality it is common practice in physics to invoke the \emph{naive mean field approximation}, which is to restrict $\nu$ to range over product measures. When the Hamiltonian has the separable form $f(x)=f_1(x_1)+\cdots + f_d(x_d)$, 
the naive mean field approximation is an exact identity. 

The main idea introduced in \cite{ChDe14} 
(see also the survey \cite{Chatterjee:survey})
is that the naive mean field approximation can be rigorously justified when $f$ has \emph{low-complexity gradient}, meaning that the image of $\nabla f$ can be efficiently approximated using a net (in particular when $f$ is affine, 
so that $\mu$ is a product measure, the image of $\nabla f$ is a single point).
By ``efficient" we mean that the metric entropy of the image of the gradient is small in comparison with the free energy $\log Z$. 
This idea was further developed by Eldan in \cite{Eldan-NMF}
where the complexity of the gradient is quantified 
in terms of the Gaussian width of its image rather than covering numbers.
In addition, he showed a low complexity gradient yields 
an approximation of the Gibbs measure $\mu$ by a 
mixture of tilted measures, each of which is close to a product measure
(see also more recent works \cite{ElGr-decomp,Austin-NMF}).
However, when $h(x)$ stands for subgraph counts, the leading term $\RR(\cdot)$ 
decreases as $p=p(\Verts) \to 0$ and while this approach is relatively general, 
between the required smooth approximation of $g$
we must employ in \eqref{eq:f-of-h},
and the move from $\nabla f_h$ to $f_h$, the ability to recover the 
optimal range of $p(\Verts)$ is completely lost.

To overcome this deficiency we take here a different approach,
better tuned to yield sharper results in specific applications.
As in \cite{ChDe14,Eldan-NMF}, our approach involves a notion of low complexity, now working directly with \eqref{eq:tail-apx} using
nets to approximate the values of the function $h$ rather than 
$\nabla f_h$.
Specifically, we construct efficient coverings of the cube $\{0,1\}^{{\Verts\choose2}}$ (identified with the space of $\Verts\times\Verts$ adjacency matrices in the natural way) by convex bodies $\cB_i$ on which the function $h$ is nearly constant. 
This can be regarded as a quantitative version of the approach from \cite{ChVa11}, which relied on the compactness of the space of graphons; the coverings we construct in Sections \ref{sec:cycles}--\ref{sec:hom} using spectral arguments quantify the compactness of the space of adjacency matrices (see 
Section \ref{sec:approach}
for further discussion of these ideas).

Let $h:[0,1]^d\to \R$, $t\in \R$, and recall the notation \eqref{def:IpT}--\eqref{def:phigen}. 
Our aim is to show that for
the product Bernoulli($p$) measure $\mu_p$ on $\{0,1\}^d$,
\begin{equation}\label{eq:var-bd}
\mu_p( \cLc) \le \expo{ - \phigen_{p}(h,t) + {\rm Error} },
\end{equation}
where {\rm Error} is of lower order than the main term $\phigen_{p}(h,t)$,
and similarly for the lower tail (namely, for 
$\mu_p(\subL h t)$).
It is well known that \eqref{eq:var-bd} holds with ${\rm Error}=0$ 
whenever $h$ is an affine function, namely, for half-spaces $\subL h t$
(hence the exactness of the naive mean field approximation for the associated tilted measure). 
As stated next, thanks to the convexity of $I_p(\cdot)$, such a zero-error, non-asymptotic bound applies for
any closed convex set.
\begin{prop}
\label{prop:cvx}
For $p \in [0,1]$ and closed convex $\cK\subseteq \R^d$,
\[
\mu_p(\cK) \le \expo{ -I_p(\cK)}\,.
\]
\end{prop}
\begin{proof} With $\mu_p(\cdot)$ supported on the convex, compact set $[0,1]^d$,
outside of which $I_p(x)=\infty$, it suffices to consider compact and convex $\cK$.
Moreover, setting $\Lambda_p(\theta) = \log \sum_x e^{\langle \theta,x\rangle} \mu_p(x)$, 
by Markov's inequality
\[
\log \mu_p(\cK) \le 
-\inf_{y \in \cK} 
\langle \theta,y\rangle + \Lambda_p(\theta) \,.
\]
Taking the infimum over $\theta \in \R^d$ and multiplying both sides by minus one, leads to 
\begin{equation}\label{eq:sion}
-\log \mu_p(\cK) \ge \sup_{\theta \in \R^d} \inf_{y \in \cK} \big\{
\langle \theta,y \rangle - \Lambda_p(\theta) \big\} \,.
\end{equation}
By convexity of $\Lambda_p(\cdot)$, the lower-semi-continuous and convex  
functional $y \mapsto \{ \langle \theta,y \rangle - \Lambda_p(\theta) \}$ 
is also concave in $\theta$. Hence, by Sion's min-max theorem, we may change in 
the \abbr{rhs} of \eqref{eq:sion} the order of the supremum over $\theta$ in a
vector space $\R^d$ and the infimum over  $y$ in a compact, convex set $\cK$
(cf.\ \cite[Thm. 4.2']{Sio58}). Consequently,
\[
-\log \mu_p(\cK) \ge \inf_{y \in \cK} \{\Lambda_p^\star(y)\} \,, \qquad 
\Lambda_p^\star(y) = \sup_{\theta \in \R^d}  \big\{
\langle \theta,y \rangle - \Lambda_p(\theta) \big\} \,.
\]
For any product measure $\mu_p(\cdot)$, the functional $\Lambda_p(\theta)$ is of the form 
$\sum_{i} \widehat{\Lambda}_p(\theta_i)$, in which case clearly $\Lambda^\star_p(y) = \sum_{i} 
\widehat{\Lambda}^\star(y_i)$, with $\widehat{\Lambda}_p$ and $\widehat{\Lambda}^\star_p$ corresponding
to the case $d=1$. Finally, for $\mu_p(1)=p=1-\mu_p(0)$ and $d=1$, it is a simple calculus 
exercise to verify that  $\widehat{\Lambda}^\star_p=I_p$.
\end{proof}

The above 
does not at first appear to be useful for proving Theorems \ref{thm:cycles} and \ref{thm:hom}, since super-level
sets for homomorphism counting functions are non-convex (except for the trivial edge-counting function). 
For example, in the case $h=\hom(\Cyc_\ell, \,\cdot)$ for $\ell$ even, 
$\cLc$ is the \emph{complement} of a convex set.
However, our key observation is that such super-level sets can
be efficiently covered by convex sets on which $h$ has small fluctuations, 
thereby utilizing the following easy consequence 
of Proposition \ref{prop:cvx} and the union bound.

\begin{cor}
\label{cor:covering}
Let $h:[0,1]^d\to \R$. 
Suppose there is a finite family $\{\cB_i\}_{i\in \cI}$ of closed convex sets in $\R^d$, an``exceptional" set $\cE\subset \{0,1\}^d$, and $\delta>0$ such that 
\begin{equation} \label{ass:contain}
\{0,1\}^d\setminus \cE  \subseteq \bigcup_{i\in \cI} \cB_i \, 
\end{equation}
and 
\begin{equation}	\label{ass:fluct}
\forall i\in\cI, \;\forall x,y\in\cB_i,\qquad h(y)-h(x)  \le \delta.
\end{equation} 
Then, for any $p \in [0,1]$ and $t\in \R$,
\begin{align}	\label{cube.upper}
\mu_p\big( \cLc \big)
 &\le |\cI| \expo{ - \phigen_{p}(h,t-\delta)}  + \mu_p(\cE)\,,
\\
\mu_p\big(\cLl \big)
 &\le |\cI| \expo{ - \psigen_{p}(h,t+\delta)}  +\mu_p(\cE)\,.
\label{cube.lower}
\end{align}
\end{cor}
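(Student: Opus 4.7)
The plan is to establish the two inequalities \eqref{cube.upper}--\eqref{cube.lower} by a straightforward union bound over the cover, in which each convex piece $\cB_i$ is controlled by Proposition \ref{prop:cvx} and the oscillation hypothesis \eqref{ass:fluct} then converts the resulting exponent $I_p(\cB_i)$ into the target quantity $\phigen_{h,p}(t-\delta)$ or $\psigen_{h,p}(t+\delta)$.

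First, I would use the containment \eqref{ass:contain} to split
\[
\mu_p(\cL_\ge(h,t)) \le \mu_p(\cE) + \sum_{i \in \cI_\star} \mu_p(\cB_i),
\]
where $\cI_\star \subseteq \cI$ collects those indices for which $\cB_i$ meets $\cL_\ge(h,t)\cap\{0,1\}^d$; for each such $i$, Proposition \ref{prop:cvx} applied to the closed convex set $\cB_i \cap [0,1]^d$ yields $\mu_p(\cB_i) \le \exp(-I_p(\cB_i \cap [0,1]^d))$.

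Second, I would sharpen the exponent using \eqref{ass:fluct}: fixing a witness $y_i \in \cB_i \cap \cL_\ge(h,t) \cap \{0,1\}^d$, the oscillation bound forces $h(x) \ge h(y_i) - \delta \ge t - \delta$ for every $x \in \cB_i \cap [0,1]^d$, so $\cB_i \cap [0,1]^d \subseteq \cL_\ge(h, t-\delta)$, and hence by monotonicity of the infimum,
\[
I_p(\cB_i \cap [0,1]^d) \ge \phigen_{h,p}(t-\delta).
\]
Inserting this into the previous display and bounding $|\cI_\star|\le |\cI|$ gives \eqref{cube.upper}. The bound \eqref{cube.lower} is completely symmetric: choosing a witness $y_i \in \cB_i \cap \cL_\le(h,t) \cap \{0,1\}^d$ and applying \eqref{ass:fluct} in the opposite direction gives $\cB_i \cap [0,1]^d \subseteq \cL_\le(h, t+\delta)$, so the same two steps produce the $\psigen$-analogue.

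There is no genuine obstacle here --- the corollary is a light packaging of Proposition \ref{prop:cvx} with a union bound. The only mild subtlety is that $h$ is defined on $[0,1]^d$ while the $\cB_i$ live in $\R^d$, so both the fluctuation inequality and the resulting containment should be read on $\cB_i \cap [0,1]^d$; this intersection is still closed and convex, so Proposition \ref{prop:cvx} applies without modification.
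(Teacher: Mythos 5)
Your proof is correct and follows essentially the same route as the paper: a union bound over the covering family, Proposition \ref{prop:cvx} applied to each (closed, convex) $\cB_i \cap [0,1]^d$, and the oscillation hypothesis \eqref{ass:fluct} to push the containment $\cB_i \cap [0,1]^d \subseteq \cL_\ge(h,t-\delta)$ and hence $I_p(\cB_i) \ge \phigen_{h,p}(t-\delta)$. The only cosmetic difference is the choice of indices retained in the union bound --- the paper keeps only those $i$ with $(\cB_i\setminus\cE)\cap\cL_\ge(h,t)\ne\emptyset$, whereas you keep all $i$ with $\cB_i\cap\cL_\ge(h,t)\cap\{0,1\}^d\ne\emptyset$ --- but since both sets are bounded by $|\cI|$ the final estimate is identical (the paper's slightly more restrictive choice is what underlies the weakening of \eqref{ass:fluct} noted in its remark after the corollary, which your version would not quite yield).
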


\begin{proof} Denoting by
$\cI'\subset \cI$ the set of $i$ for which $(\cB_i \setminus \cE) \cap \cLc \ne \emptyset$,
we have by the union bound, followed by Proposition \ref{prop:cvx} that
\begin{align*}	
\mu_p (\cLc) &\le \mu_p(\cE) + \sum_{i \in \cI'} \mu_p(\cB_i) 
\le \mu_p(\cE) + \sum_{i \in \cI'} e^{-I_p(\cB_i)} \\
& \le |\cI'| \exp\big\{ -\min_{i\in \cI'} I_p(\cB_i)\big\} + \mu_p(\cE) \,.
\end{align*}
From \eqref{ass:fluct} it follows that
$\cB_i \cap[0,1]^d \subseteq \cLcd$ 
for any $i \in \cI'$. Hence,
\[
\min_{i \in \cI'} \{ I_p\big(\cB_i \big) \} 
\ge I_p(\cLcd) = \phigen_{p}(h,t-\delta)
\]
and \eqref{cube.upper} follows. The same line of reasoning yields
also \eqref{cube.lower}.
\end{proof}
\begin{remark} Our proof shows that it suffices for \eqref{cube.upper} 
to have \eqref{ass:fluct} for $x \in \cB_i$ and $y \in \cB_i \setminus \cE$,
whereas for \eqref{cube.lower} it suffices to have \eqref{ass:fluct} for 
$y \in \cB_i$ and $x \in \cB_i \setminus \cE$.
\end{remark}

Proposition \ref{prop:cvx} and Corollary \ref{cor:covering} 
yield only \emph{upper} bounds on tail probabilities.
In some cases tilting arguments can show that these bounds are sharp. 
For our main application 
even this is unnecessary since the 
variational problem \eqref{def:phi} was solved in \cite{LuZh14, BGLZ16}, with sharpness directly verified by considering the events Clique and Hub from \eqref{Clique}, \eqref{Hub}, respectively. 

Whereas 
Corollary \ref{cor:covering} is rather elementary, the \emph{real} technical challenge is in the design of coverings $\{\cB_i\}_{i\in \cI}$ for the space of $\Verts\times\Verts$ adjacency matrices (up to well-chosen exceptional sets) that are efficient enough to allow the sparsity parameter $p=p(\Verts)$ to decay as quickly as in our stated results. 
We obtain suitable coverings via quantitative versions of the regularity and counting lemmas, stated in Section \ref{sec:regularity}.
To establish these, we employ techniques of high-dimensional geometry and spectral analysis.
As detailed in Subsection \ref{sec:regularity}, our approach to the regularity lemma for sparse random graphs (Proposition \ref{prop:RLRG}) is a quantitative strengthening of a well-known spectral proof of the classic regularity lemma \cite{FrKa99,Szegedy11} (see also \cite{Tao-blog-regularity-lemma}).
As a key intermediate step we obtain upper tail bounds for ``outlier" eigenvalues of $\Anp$, i.e.\ eigenvalues of size $\sqrt{\Verts p}\ll |\lambda_j(\Anp)|\ls \Verts p$, which might be of independent interest.
For a slowly growing parameter $\rnk (\Verts)$ this allows us to approximate $\Anp$ by its rank-$\rnk $ projection, which in turn can be approximated by a point $Y\in \cX_\Verts$ in a net of size $O(\rnk \Verts\log\Verts)$. This provides an efficient covering by spectral-norm balls (up to an exceptional event containing matrices with many large outlier eigenvalues). 

For the case that $H$ is a cycle we can take a more refined approach. In particular, we take advantage of the approximate orthogonality of the images of the rank-$\rnk $ approximation $Y$ for $\Anp$ and of the residual matrix $\Anp-Y$ to get improved control on the fluctuation of $\hom(\Cyc_\ell,\,\cdot)$ on a convex body $\cB_Y$ that is specially designed to exploit this orthogonality. See 
Section \ref{sec:ref-approach} for further discussion of these ideas. In particular, 
the probabilistic parts of our arguments are confined to 
\Cref{lem:HSk}, which is used to control the exceptional set $\cE$ in our applications of \Cref{cor:covering}
(and to Lemmas \ref{lem:LB} and 
\ref{lem:HP-lb} for converses of \Cref{prop:cvx} that we utilize for matching 
lower bounds in \Cref{thm:lam12}, and in
Theorems \ref{thm:Sidorenko}, \ref{thm:Schatten.lower}, respectively).


\section{Spectral regularity method for random graphs}
\label{sec:approach}

To establish Theorems \ref{thm:hom} and \ref{thm:cycles} via 
\Cref{cor:covering}, we need to find a covering of ``most" of 
$\cA_\Verts$ 
by convex sets on which the functions $\hom(H, \cdot)$ have small fluctuations. 
In effect, our approach is a quantitative refinement of the argument in \cite{ChVa11}, which  
uses the topological space of graphons with the cut metric to obtain 
such coverings in the dense setting.
In this section, we first motivate our spectral approach to covering constructions, and how it can be optimized towards Theorem \ref{thm:cycles} for cycle counts. 
Then, in Subsection \ref{sec:regularity}, we make the connection with graphon methods more precise by stating quantitative versions of the regularity and counting lemmas tailored for applications to sparse random graphs; along with Corollary \ref{cor:covering}, these are the key ingredients for establishing Theorem \ref{thm:hom}.

\subsection{A simple argument for triangle counts}
\label{sec:C3}

We begin with a short, crude version of our argument
for the normalized homomorphism counting function
\[
h_3(X) := (\Verts p)^{-3} \hom(\Cyc_3, X) \,. 
\]
It yields the upper tail \eqref{eq:upper-tail.cyc} for 
$\ell=3$ and $ ((\log\Verts)/\Verts)^{1/8}\ll p\ll1 $, and motivates 
the derivation of refined estimates on the spectrum of $\Anp$ 
in Section \ref{sec:spec}. Specifically, observe that 
with eigenvalues as in \eqref{eval.order} and $1\le m \le \Verts$,
\begin{equation}\label{bd:HS-k}
\|X_{\le m}\|_\HS^2 :=
\sum_{j=1}^m 
\lambda_j^2(X) \ge m \, \lambda_m^2(X) \,.
\end{equation}
Thus, we have for the projection of $X \in \B_\HS(\Verts)$ to  
$X_{\le \rnk } \in \Sym_{\Verts,\rnk }(\R)$, as in \eqref{def:cutoff},
that 
\begin{equation}	\label{Rapprox.basic}
\|X-X_{\le \rnk }\|_\op = \|X_{>\rnk }\|_\op = |\lambda_{\rnk +1}(X)| \le \frac{\Verts}{\sqrt{\rnk +1}}.
\end{equation}

A \emph{$\delta$-net} for a metric space $(E,d)$ is a subset $\cN$ such that
$\sup_x \{ d(x,\cN) \} \le \delta$.
Lemma \ref{lem:net} constructs efficient $\delta$-nets for the collection
$\Sym_{\Verts,\rnk }(\R)$ of all $\Verts$-dimensional symmetric matrices of rank at most $\rnk$, yielding
the following bound on the size of such nets.
\begin{lemma}	\label{lem:net1}
For any $1\le\rnk\le \Verts$ and any $\delta\in (0,1]$, the set $\Sym_{\Verts,\rnk }(\R) \cap \B_\HS(\Verts)$ 
has a $3\Verts\delta$-net $\cN$ in the Hilbert--Schmidt norm of size 
\[
|\cN|\le \expo{ \rnk(\Verts+2)\log(3\Verts/\delta)} .
\]
\end{lemma}
\begin{proof}
Thanks to \eqref{HS.bd} we have the $3 \Verts\delta$-net $\cN = \Mat(\Sigma\times \cV)$  
for $\Sym_{\Verts,\rnk }(\R)\cap \ball_\HS(\Verts)$, in the Hilbert--Schmidt norm.
Further, from \eqref{bd:Vnet} its size is at most
\[
|\cN|\le |\Sigma| \cdot |\cV| \le \expo{ \rnk(2\Verts^2/\delta) + \rnk\Verts \log(3\sqrt{\rnk}/\delta)}
\le \expo{ \rnk(\Verts+2)\log(3\Verts/\delta)}\,,
\]
as stated.
\end{proof}

From \eqref{Rapprox.basic}, \Cref{lem:net1} and the triangle inequality,
for some $1\le \rnk\le \Verts$ to be chosen later
we have a set $\cN\subset \cX_\Verts$ of size $\exp(O(\rnk\Verts\log\Verts))$ consisting of matrices of rank at most $\rnk$
such that for any $X\in \cX_\Verts$ there exists $Y\in \cN$ with
\begin{equation}	\label{h3:XYop}
\|X-Y\|_\op \le \|X_{> \rnk}\|_\op + \|X_{\le \rnk} -Y\|_\HS 
\le \eps\Verts\,,\qquad \text{ where} \quad \eps \asymp \rnk^{-1/2}.
\end{equation}
This is the key fact behind the
quantitative covering of \cite{ChDe14}; 
incidentally, it also underlies a well-known 
spectral proof of the regularity lemma 
\cite{FrKa99,Szegedy11,Tao-blog-regularity-lemma}.
Note that whereas in \cite{ChDe14} such a net is used to approximate the \emph{gradient} of the functions $\hom(H, \cdot)$, here
we use nets to approximate the values of the functions themselves.

To each $Y\in \cN$ we associate the closed, convex 
set $\cB_Y =\{ X\in \cX_\Verts: \|X-Y\|_\op\le \eps\Verts\}$. 
By Weyl's inequality, 
upon ordering the eigenvalues of $M_1,M_2\in \Sym_\Verts(\R)$ on $\R$ 
(instead of by modulus), we have that
\begin{equation}	\label{lambda.diff}
|\lambda_j(M_1)-\lambda_j(M_2)|\le \|M_1-M_2\|_\op\qquad \forall \; 1\le j\le \Verts \,.
\end{equation}
Since $|a^{\ell} - b^{\ell}| \le \ell |a-b| (|a|^{\ell-1} + |b|^{\ell-1})$ 
for any $a,b\in \R$, $\ell \in  \N$, it follows that
\begin{align}\label{claim2.1}
|\Tr M_1^\ell - \Tr M_2^\ell | &\le 
\sum_{j=1}^\Verts |\lambda_j(M_1)^{\ell} - \lambda_{j}(M_2)^{\ell}|
\notag \\
&\le \ell \|M_1-M_2\|_\op \, (\, \|M_1\|_{S_{\ell-1}}^{\ell-1}+ \|M_2\|_{S_{\ell-1}}^{\ell-1}\, ) \,. 
\end{align}
Considering \eqref{claim2.1}  
for $\ell=3$ and matrices 
$Y$, $X \in \cB_Y	
 \subseteq  \cX_\Verts \subset \B_\HS(\Verts)$, we have by 
\eqref{h3:XYop} that
\begin{equation} \label{133}
|\Tr X^3-\Tr Y^3| \le 3 \|X-Y\|_\op \, ( \|X\|^2_{\HS} + \|Y\|^2_{\HS} ) 
\le 6 \eps \Verts^3 \,.
\end{equation}
Consequently,
when $\eps=o(p^3)$, for which it suffices 
to take $\rnk \gg p^{-6}$ (see \eqref{h3:XYop}), we get by the 
triangle inequality, that uniformly over $Y\in \cN$ and $X_1,X_2\in \cB_Y$, 
\begin{equation}	\label{h3.fluct}
|h_3(X_1)-h_3(X_2)| = (\Verts p)^{-3} |\Tr X_1^3-\Tr X_2^3| =o(1)\,.
\end{equation}
Hence, by Corollary \ref{cor:covering} with $\{\cB_i\}_{i\in \cI}= \{\cB_Y\}_{Y\in \cN}$ and $\cE=\emptyset$, we deduce that 
\begin{align}
\pro{ h_3(\Anp) \ge 1+u} 
&\le |\cN| \expo{ - \phigen_{ p}(h_3,1+u-o(1))}		\notag \\
&= \expo{ -\upphi_{\Verts,p}(\Cyc_3,u-o(1)) + O(\rnk\Verts\log\Verts)} \,.\label{h3:above}
\end{align}
The main term in \eqref{h3:above} dominates the error term 
when $\rnk \Verts\log\Verts \ll \Verts^2 p^2$. We can
satisfy this and our requirement that $\rnk \gg p^{-6}$, provided
$p\gg ((\log\Verts)/\Verts)^{1/8}$.

\subsection{Refined approach}\label{sec:ref-approach}
The element $Y=Y(A)\in \cN$ 
was obtained by approximating the rank $\rnk$ matrix $A_{\le \rnk}$ associated with
each adjacency matrix $A\in \cA_\Verts$. 
In doing so, we can even take 
$\delta = \Verts^{-3 \ell}$ and
the net $\cN$ fine enough to ensure 
$\|A_{\le \rnk} -Y(A)\|_\HS \le 3 \delta\Verts$ while still having 
$\log|\cN|\ls \ell \rnk\Verts \log\Verts$ (cf.\ \Cref{lem:net1}).
Thus, $Y$ is essentially the projection of $A$ onto the eigen-space of its $\rnk$ extremal eigenvalues. 
In particular, the images of the linear operators
 $Y$ and $A-Y$ are nearly orthogonal linear sub-spaces of $\R^n$. 
 This property roughly carries over to any matrix $X$ in the convex hull $\cB'_Y$
of all $A\in \cA_\Verts$ with $\|A_{\le \rnk} - Y\|_\HS \le 3 \delta\Verts$ (see  
\eqref{def:By}). Consequently, 
\begin{equation}\label{eq:fluct-triag}
|\Tr X^3 - \Tr Y^3| \approx |\Tr(X-Y)^3| \le 
\|X-Y\|_{S_3}^3 \approx 
\|X_{>\rnk}\|_{S_3}^3
\qquad \forall X\in \cB'_Y\,,
\end{equation}
thereby reducing the task of controlling the fluctuation of $h_3(X)$ on 
sets $\cB'_Y$ to that of bounding the tail of the (absolute) third moment of the spectrum. Such approximate orthogonality applies to any 
spectral function of $A$ that is dominated by the large eigenvalues
(among 
$\hom(H, \cdot)$ these are precisely 
$\hom(\Cyc_\ell, \cdot)$, but
Schatten norms 
also have this property). The bound $\Verts^3/\sqrt{\rnk}$ of
\eqref{133} is the best we can achieve in \eqref{eq:fluct-triag} with the bound \eqref{Rapprox.basic} on $\{ |\lambda_j|$, $j>\rnk \}$.
While 
it is essentially sharp for \emph{general} elements of $\cX_\Verts$, for \emph{random} elements of $\cA_\Verts$ (under the Erd\H{o}s--R\'enyi measure $\mu_p$) we can do much better. 
Indeed, with probability $1-o(1)$ we have $\lambda_1(\Anp)\sim \Verts p$ and $|\lambda_2(\Anp)|=O(\sqrt{\Verts p})$. In fact, reordering the eigenvalues as $\lambda_1(\Anp)\ge \lambda_2(\Anp)\ge \cdots\ge \lambda_\Verts(\Anp)$, 
we have that $\lambda_2(\Anp)/\sqrt{\Verts p}$ and $\lambda_\Verts(\Anp)/\sqrt{\Verts p}$ ``stick" to the edges $\pm2$ of the support of Wigner's semicircle distribution
(see Lemma \ref{lem:gap}).
However, we are limited to exploiting properties of random elements holding with probability $1-\exp(-\omega(\Verts^2p^2\log(1/p)))$, which do not include the event that $|\lambda_2(\Anp)|=O(\sqrt{\Verts p})$.
Indeed, on the events Clique and Hub from \eqref{Clique}, \eqref{Hub} the Perron--Frobenius eigenvalue $\lambda_1(\Anp)$ is joined by a second ``outlier" eigenvalue at scale $\Verts p$ (on Clique it is a positive outlier, while on Hub it is negative). 
Additional outlier eigenvalues correspond to having a large-scale pattern for the edge distribution which is of rank $\ge 3$ (see \cite{Tao-blog-regularity-lemma} for one formalization of this heuristic).
Fortunately, for $\Cyc_\ell$-counts we only need
\begin{equation}\label{eq:power}
\|\Anp_{>\rnk}\|_{S_\ell} = o(\Verts p)
\end{equation}
with $\rnk=\rnk(\Verts)$ growing poly-logarithmically, in order
to allow $p$ of size $\Verts^{-1/2}(\log\Verts)^C$.
Using the appropriate exceptional sets,
we accomplish this by utilizing Proposition \ref{prop:tail.refined}
(for triangle counts we must further assume $p\gg \Verts^{-1/3}$, though this can be avoided with some extra arguments -- see Remark \ref{rmk:Augeri}).

For the lower tail bound in Theorem \ref{thm:cycles} we can only exclude events of probability $1-\exp( -\omega(\Verts^2p))$, hence the somewhat larger lower limit on $p$ in \eqref{prange.cyc.lower}, but as seen 
in Section \ref{sec:lowertails}, for even $\ell$ we have
no such restrictions (by the convexity of sub-level sets).

A key feature of cycle homomorphism counts is that they can be expressed as functions of the spectrum of $\Anp$ alone, which lets us get sharp control on the fluctuations of these functions on the sets $\cB_Y'$ from \eqref{eq:fluct-triag} via \eqref{eq:power}.
For general $H$ as in Theorem \ref{thm:hom} we lack a spectral representation of $\hom(H, \cdot)$, 
so instead of the sets $\cB_Y'$ we use a covering by spectral-norm balls. In particular, we cannot exploit orthogonality of the images of $Y$ and the residual $A-Y$ as we do for cycles to obtain sharp control on the fluctuations of $\hom(H, \cdot)$.  
Nevertheless, 
after removing improbable events involving 
extremely large values of $\hom(F, \Anp)$ for sub-graphs $F$ of $H$, 
we get strong control on fluctuations of $\hom(H, \cdot)$ by an iterative ``pruning" procedure on $H$,
resulting in the ``counting lemma" of \Cref{prop:countingB} below.

\subsection{Regularity and counting lemmas for random graphs}
\label{sec:regularity}

We first recall the definitions of the space $\cW_0$ of graphons and the cut metric. Denote by $\cW$ the space
of bounded, symmetric, Lebesgue-measurable functions $f:[0,1]^2\to \R$
(as in Section \ref{sec:lowertails}), equipped with the \emph{cut-norm}
\[
\|f\|_\square = \sup_{S,T\subseteq[0,1]} \Big| \int_{S\times T} f(x,y) dxdy \Big|,
\]
where the supremum is taken over measurable subsets of $[0,1]$. 
The \emph{cut-metric} on $\cW$ is then
\[
\delta_\square(f,g):= \inf_{\sigma\in \Sigma} 
\{ \|f-g^{\sigma}\|_\square \} \,, 
\]
where 
$g^\sigma(x,y):= g(\sigma(x), \sigma(y))$ and
the infimum is taken over all measure-preserving bijections $\sigma:[0,1]\to[0,1]$.   
On $\cW$ we have the equivalence relation $f\sim g$ if and only if 
$f=g^\sigma$ for some $\sigma\in \Sigma$, and denote by $\wt{g}$ 
the $\delta_\square$-closure of the corresponding
orbit $\{g^\sigma:\sigma\in \Sigma\}$ of $g \in \cW$.
Setting as $\cW_0$ the collection of elements $f \in \cW$ with $0 \le f \le 1$,
the associated quotient spaces 
$\wt{\cW}=\{\wt{g}: g\in \cW\}$, $\wt{\cW}_0= \{ \wt{g}: g\in \cW_0\}$
are thus $\delta_\square$-metrizeable.

Graphons provide a topological reformulation of the \emph{regularity method} from extremal graph theory, which rests on two key facts: Szemer\'edi's regularity lemma, and the counting lemma. These can be formulated for graphons as follows (cf.\ \cite{Lovasz-book}):

\begin{lemma}[Weak regularity lemma for graphons]
\label{lem:WRL}
For every $f\in \cW_0$ and $k\ge 1$ there exists a step function $g\in \cW_0$ with $k$ steps (i.e.\ a partition $\cP$ of $[0,1]$ into $k$ measurable sets, such that $g$ is constant on $S\times T$ for all $S,T\in \cP$) such that
\[
\|f-g\|_\square \le \frac{2}{\sqrt{\log k}} \,.
\]
\end{lemma}
\begin{lemma}[Counting lemma for graphons]
\label{lem:counting}
For every simple graph $H$ and every $f,g\in \cW_0$, 
\[
|t(H,f) - t(H,g) | \le \edges(H) \delta_\square(f,g).
\]
(Recall the homomorphism density functionals $t(H,\cdot)$ from \eqref{def:tH}.)
\end{lemma}

The weak regularity lemma is closely related to the fact that $(\wt{\cW}_0, \delta_\square)$ is a compact metric space, while the counting lemma says that the functionals $t(H,\cdot)$ are continuous with respect to the cut-metric. 
Taken together, they allow one to cover large deviation events for \emph{dense} Erd\H{o}s--R\'enyi graphs by a bounded collection of graphon neighborhoods on which the functions $\hom(H, \cdot)$ are essentially constant, which was the approach taken in
 \cite{ChVa11}.
Towards proving Theorem \ref{thm:hom} we 
obtain the following quantitative analogues of the regularity and counting lemmas for the probability space $(\cA_\Verts,\mu_p)$. A notable feature is to replace the 
cut norm, which for $f\in \cW$ is equivalent to the $L_{\infty}\to L_1$ norm of the associated operator $g\mapsto \int_0^1 f(\cdot,y)g(y)dy$ (cf.\ \cite[Section 8.2]{Lovasz-book}),
by the (spectral) $\ell_2\to \ell_2$ operator norm on matrices.

\begin{prop}[Spectral regularity lemma for random graphs]
\label{prop:RLRG}
For some absolute constant $C_\star<\infty$, 
any $\errRLs \le 1$, $K,\Delta >0$,  $\Verts^{-1}\log \Verts \le p<1$ and 
$\Verts \ge \rnk \ge  K (p^\Delta/\errRLs^2) \log(1/p)$, there
exists a partition $\cA_n = \bigsqcup_{j=0}^{\Net} \cE_j$ such that 
\begin{align}
\label{RLRG:a} 
\log \Net & \le \rnk(\Verts+2) \log\big(\frac{3\Verts}{\errRLs} \big) 
\,, \\
\label{RLRG:b} 
\mu_p(\cE_0) & \le 4 \exp(- K \Verts^2 p^\Delta \log (1/p)) \,,
\end{align}
and for each $1 \le j \le \Net$, 
there exists $Y_j\in \Sym_{n,\rnk }(\R) \cap \B_\HS(n)$ with
\begin{equation}	\label{RLRG.diam}
\max_{A \in \cE_j} \{ \|A-(p\jay+Y_j)\|_\op \}  
\le C_\star ( \sqrt{\Verts p} + \errRLs n )  \,.
\end{equation}
(Recall the notation $\jay= \1\1^\tran-\id$ for the adjacency matrix of the complete graph.)
\end{prop}
\begin{remark} Note that \Cref{prop:RLRG} is essentially optimal for 
establishing the sharp upper tail for $\hom(H,\Anp)$ in the regime $p\gg \Verts^{-1/\Delta(H)}$. Indeed,
we seek a partition of $\cA_\Verts$ which is fine enough to detect the presence of a clique or hub structure of appropriate size -- for the upper tail event of $\hom(H,\Anp)$ the events are as in \eqref{Clique} and \eqref{Hub} but with $p$ replaced by $p^{\Delta(H)/2}$.
These planted structures are perturbations of $\Anp$ of norm $\Theta(np^{\Delta(H)/2})$.
In view of \eqref{RLRG.diam}, \Cref{prop:RLRG} at 
$\Delta=\Delta(H) \ge 2$ and $\errRLs = O(p^{\Delta/2})$ exhibits a fine enough partition, 
provided $\rnk\gg \log(1/p)$ (where 
choosing a 
sufficiently large $K=O(1)$ yields 
an exceptional event $\cE_0$ which is negligible in comparison 
to the large deviation event). By \eqref{RLRG:a}, the entropy $\log \Net$ of this partition is also negligible in comparison to the rate 
$\RR_{\Verts,p}(H, u) \asymp_{H,u} \Verts^2 p^{\Delta(H)}\log(1/p)$ provided 
$\rnk\ll \Verts p^{\Delta(H)}$, and  we can satisfy both constraints on $\rnk$ 
as long as $\Verts p^{\Delta(H)} \gg \log \Verts$.
\end{remark}
\begin{remark}
To allow for $p\ll \Verts^{-\Delta(H)}$ would require a reduction of the size of the collection of events $\cE_j$ used for covering the large deviation events for $\hom(H, \Anp)$. 
The right hand side of \eqref{RLRG:a} is the metric entropy of the set of rank $k$ matrices in $\ball_\HS(\Verts)$ and we cannot do any better for a spectral norm covering of $\cA_\Verts$. 
However, it is likely that the large deviation events themselves have much lower metric entropy. Indeed, on the events \eqref{Clique} and \eqref{Hub}, $\Anp$ is close to rank-2 matrices whose eigenvectors consist of the nearly-constant Perron--Frobenius eigenvector, together with a second eigenvector that is localized to a set of size $\Theta(\Verts p)$ and $\Theta(\Verts p^2)$ respectively. The set of low-rank matrices with strongly localized eigenvectors has significantly smaller metric entropy than the right hand side of \eqref{RLRG:a}, but we
do not pursue this direction here.
\end{remark}
Hereafter we denote the sub-level sets
\begin{equation}	\label{def:LH}
\cL_H(t):= \{ X\in \cX_\Verts: \hom(H,X) \le t n^{\verts(H)} p^{\edges(H)} \}.
\end{equation}
The following analogue of \Cref{lem:counting} shows that for any graph $H$ and $K=O(1)$, after 
localizing to a region that is near all of the sub-level sets $\cL_F(K)$ with $F\prec H$, the normalized 
homomorphism count $\Verts^{-\verts(H)} p^{-\edges(H)}\hom(H,\cdot)$ is 
$O_H(1/(\Verts p^{\Delta_\star}))$-Lipschitz in the spectral norm.

\begin{prop}[Spectral norm counting lemma]\label{prop:countingB}
$~$

\noindent
For any finite graph $H$ with $\Delta_\star(H) \le \Delta_\star$ and
any convex set $\cB \subseteq \cX_\Verts$ satisfying
\begin{align}	\label{homF-min}
\cB \cap \cL_F(K) \ne \emptyset & \,, \quad  \forall F \prec H  \,, \\
\label{op-diam-cB}
\sup_{X,Y\in \cB} \{ \|X-Y\|_\op \} &\le \eps_0\Verts p^{\Delta_\star} \,,
\end{align}
for some $p \in (0,1)$, $\Verts \in \N$ and $K \ge 1 \ge \eps_0$,
we have that for all $F\preccurlyeq H$, 
\begin{equation}	\label{eq:bd-fluct}
\Fluct(F;\cB) := \sup_{X,Y\in \cB} \{ | \hom(F, X) - \hom(F,Y)| \}  \le C_H
\eps_0 K \Verts^{\verts(F)}p^{\edges(F)}
\end{equation}
for a constant $C_H<\infty$ depending only on $H$.
\end{prop}
\begin{remark}\label{prop:counting}
Note that \eqref{eq:bd-fluct} applies for any spectral-norm ball $\cB$ of 
radius $\eps_0\Verts p^{\Delta_\star(H)}$ which intersects  $\bigcap_{F\prec H} \cL_F(K)$.
Thus, with $\hom(H,\Anp)$ typically of size $\Verts^{\verts(H)} p^{\edges(H)}$, our counting lemma says that a spectral-norm net for $\cA_\Verts$ as in \Cref{prop:RLRG} can detect deviations of size $\eps \Verts^{\verts(H)} p^{\edges(H)}$ as long as we have there $\errRLs = O(\eps p^{\Delta_\star(H)})$. 
This is the only reason for the constraint $p\gg \Verts^{-1/(2\Delta_\star(H))}$ in \Cref{thm:hom}; an improvement of \Cref{prop:countingB} with $\Delta(H)/2$ in place of $\Delta_\star(H)$ would allow us to have 
$p\gg \Verts^{-1/\Delta(H)}$ in \Cref{thm:hom} (recall that $\Delta(H)+1\le 2\Delta_\star(H)\le 2\Delta(H)$). 
We achieve such an improved sparsity range 
in \Cref{thm:cycles}, via a finer  analogue of \eqref{eq:bd-fluct} that exploits the tighter 
relationship between $\hom(C_\ell, \Anp)$ and the spectrum of $\Anp$ (see \Cref{claim2}). 
\end{remark}

\begin{remark}[Comparison with the entropic stability method]
\label{rmk:HMS2}
Taken together, Propositions \ref{prop:RLRG} and \ref{prop:countingB} yield a covering of $\cA_\Verts$ by spectral-norm balls on which $\hom(H,\cdot)$ is essentially constant, allowing us to cover 
large deviation events for $\hom(H,\Anp)$ by a relatively small number of sets where we can apply \Cref{prop:cvx}. However, the covering becomes inefficient when $p$ is too small, and it is natural to ask whether in that case one could still find an efficient covering for the large deviation event only, rather than for all of $\cA_\Verts$.

The approach developed in the more recent work \cite{HMS19} (and also followed in \cite{BaBa20}) uses a refinement of a moment method argument from \cite{JOR04} to show that the large deviation event 
must coincide with the appearance of a small localized structure that they call a \emph{core} (recall from Remark \ref{rmk:HMS1} that these works consider large deviations for subgraph counts $\sub(H,\Gnp)$ rather than homomorphism counts). Morally, cores are approximately cliques and hubs of the appropriate size (as is indeed shown to hold when $H$ is a clique in \cite{HMS19}). 
In this way, they obtain a covering of the large deviation event by events of the form $\{\prod_{\{i,j\}\in E} \Anp(i,j) = 1\}$ with $E\in {[\Verts]\choose 2}$ ranging over the possible edge sets for cores. 
The technical challenge that takes up the bulk of their proof is to show that this covering is indeed efficient, i.e.\ that the number of cores is at most $\exp( o( \upphi_{\Verts,p}(H,u)))$, a property they term \emph{entropic stability}. 
The reduction to verification of this property applies to general low-degree polynomials of sparse i.i.d.\ Bernoulli variables. However, the task of establishing entropic stability is problem specific;  in the case of subgraph-counting functions 
the task involves many nontrivial facts from graph theory, and at present it has only been achieved for counts of regular graphs.
%
%
\end{remark}

\section{Preliminary control on the spectrum}
\label{sec:spec}
We consider for $1\le m\le \Verts$, the norms
\begin{equation}\label{def:HSR}
\|\Anp_{\le m}\|_\HS
= \sup_{{\sf W}:\dim {\sf W} = m} \|\Pi_{\sf W} (\Anp)\|_\HS \,, 
\end{equation}
where $\Pi_{\sf W}$ denotes the operator for projection to the subspace ${\sf W}$,
and link the growth of $m \mapsto \|X_{\le m}\|_\HS$ to
the decay of $\rnk \mapsto \|X_{\ge \rnk}\|_{S_\alpha}$ (when $\alpha > 2$).
\begin{lemma}\label{lem:HSk-Salpha-tail}
Fixing finite $L,D \ge 0$, let
\begin{equation}
\cG(L,D) := \big\{ X \in \Sym_\Verts(\R) \,:\, 
\|X_{\le m}\|_\HS \le L  + \sqrt{m} D \,, \qquad \forall\; 1\le m\le \Verts\, 
\big\} \, .
\end{equation}
Then, for 
$\kappa_\alpha:= (\frac{2}{\alpha-2})^{1/\alpha}$,
any $L,D$, $\alpha \in (2,\infty]$, $1 \le \rnk \le \Verts$ and $X \in \cG(L,D)$,
\begin{equation}\label{tail.refined}
\|X_{>\rnk}\|_{S_\alpha} \le (\Verts-\rnk)^{1/\alpha} D  +
\kappa_\alpha \, L \, \rnk^{1/\alpha-1/2} \,.
\end{equation}
\end{lemma}
\begin{proof} Recall from \eqref{bd:HS-k}, that if
$X \in \cG(L,D)$, then for any $m \in [\Verts]$,
\begin{equation}	\label{lambdaR.bd}
|\lambda_m(X)| = \|X_{\ge m}\|_\op \le m^{-1/2} \|X_{\le m}\|_\HS \le D + L m^{-1/2} \,,
\qquad \forall\; 1\le m\le \Verts \,.
\end{equation}
That is, \eqref{tail.refined} holds at $\alpha=\infty$ (with $\kappa_\infty=1$). Having 
\eqref{lambdaR.bd} at all $m \in (\rnk,\Verts]$, it follows by 
the triangle inequality, that for any finite $\alpha > 2$,
\[
\|X_{>\rnk}\|_{S_\alpha} \le (\Verts-\rnk)^{1/\alpha} D + L \Big( \sum_{m>\rnk} m^{-\alpha/2} \Big)^{1/\alpha}
\,.
\] 
Further, bounding the latter $\ell_\alpha$-norm on $\R^\Verts$, we get 
that for any $\alpha>2$ and $\rnk \ge 1$,  
\[
\Big( \sum_{m=\rnk+1}^\infty m^{-\alpha/2} \Big)^{1/\alpha}
\le 
\Big(\int_{\rnk}^{\infty} u^{-\alpha/2} du\Big)^{1/\alpha}
= \kappa_\alpha \rnk^{1/\alpha-1/2} \,,
\]
thereby establishing \eqref{tail.refined}.
\end{proof}

The main result of this section, used for controlling the 
exceptional set $\cE$ in \Cref{cor:covering},
is as follows.
\begin{prop} 
\label{prop:tail.refined}
For some $C,C', c>0$, any $K\ge 2$ and $\Verts p \ge \log\Verts$,

\begin{equation}\label{def:excep.pm}
\pr\Big(\Anp \notin \cG\big(K \Verts p, C' \sqrt{\Verts p}\,\big)\, \Big) 
\le C \expo{ -cK^2\Verts^2p^2} =: \pexcep (K) \,.
\end{equation} 
Hence, up to probability $\pexcep(K)$, the matrix $\Anp$ 
satisfies \eqref{tail.refined} with $L=K \Verts p$, $D=C' \sqrt{\Verts p}$,  
any $\alpha \in (2,\infty]$ and all $1\le \rnk\le \Verts$.
\end{prop}

For $A\in \cA_\Verts$ and $0 \le \rnk \le n$ we define 
\begin{equation}	\label{def:Ak}
A^{(\rnk)} := p\jay + (A - p\jay)_{\le \rnk}\,, 
\end{equation}
so $A^{(0)}=p\jay$, while $A^{(\Verts)}=A$. Our next lemma shows that $\Anp^{(\rnk)}$ 
approximates $\Anp$ in the spectral norm 
(whereas by \Cref{lem:net}, if $\rnk$ is not too large 
we can approximate $\Anp^{(k)}$ with a net of acceptable size). 
In particular, since $\|\jay\|_{\HS} \le n$, combining \eqref{HS:bound} 
for $t=(K-1) \Verts p$ and $k=m$, with a union bound over $1 \le m \le \Verts$, yields 
\Cref{prop:tail.refined} for $C'=C_1$.
\begin{lemma}
\label{lem:HSk}
For any $\kappa>0$ there exists $C_\kappa<\infty$ such that for
$\frac{\kappa}{n} \log n\le p\le 1$, $t\ge0$ and $1\le \rnk\le n$, 
\begin{align}	\label{HS:bound}
\pr \big(  \| ( \Anp - p\jay)_{\le\rnk} \|_{\HS} \ge  C_\kappa \sqrt{\rnk np} + t  \big) &\le 4e^{-t^2/16} \,, \\\label{approx:op}
\pr \Big( \| \Anp - \Anp^{(\rnk-1)}\|_\op \ge C_\kappa \sqrt{np} + \frac{t}{\sqrt{\rnk}} \Big) &\le 4e^{-t^2/16} \, . 
\end{align}
\end{lemma}

In proving Lemma \ref{lem:HSk} we employ the following well-known 
concentration inequality.
\begin{theorem}[\!\! cf.\ {\cite[Theorem 6.6]{Talagrand-newlook}}]
\label{thm:Talagrand}
Suppose
$F:[-1,1]^d\to \R$ is convex and $L$-Lipschitz with respect to the Euclidean metric for some $L<\infty$ and the random vector  
$\xi\in [-1,1]^d$ has independent components.
Then, for any median $m$ of $F(\xi)$ and $t\ge0$, 
\[
\pr(|F(\xi)-m|\ge t) \le 4\exp \Big\{ -\frac{t^2}{16L^2}\Big\} \, .
\]
\end{theorem}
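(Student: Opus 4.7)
The plan is to derive Theorem~\ref{thm:Talagrand} from Talagrand's \emph{convex distance inequality} applied to the convex sublevel sets of $F$. For $x \in [-1,1]^d$ and any Borel $A \subseteq [-1,1]^d$, define the convex distance
\[
d_T(x,A) \;:=\; \sup_{\alpha \in \R_{\ge 0}^d,\, \|\alpha\|_2 \le 1} \;\inf_{y \in A}\; \sum_{i=1}^d \alpha_i\, \tfrac{|x_i-y_i|}{2}\,,
\]
an $\ell_2$-type measure of how far $x$ lies from $A$ when weighted in the ``worst'' direction $\alpha$.

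The first step is to establish the convex distance inequality
\[
\int \exp\!\Bigl(\tfrac14\, d_T(x,A)^2\Bigr)\, d\mu(x) \;\le\; \frac{1}{\mu(A)}
\]
for the product law $\mu$ of $\xi$ on $[-1,1]^d$ and any Borel $A$ with $\mu(A) > 0$. The standard route is an induction on $d$: the base case $d=1$ is elementary, and the inductive step isolates the last coordinate, conditions on it, and combines the two conditional bounds (one using $A$ restricted to the observed last coordinate, the other ignoring the last coordinate) by a Cauchy--Schwarz argument tuned with an optimal convex combination; see \cite[\S 4--6]{Talagrand-newlook}.

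The second step is a purely geometric lemma: for any \emph{convex} set $A \subseteq [-1,1]^d$ and every $x \in [-1,1]^d$, the Euclidean distance satisfies $\mathrm{dist}_2(x,A) \le 2\, d_T(x,A)$. This follows from a minimax/separation argument: the supremum defining $d_T(x,A)$ is essentially achieved by an $\alpha$ playing the role of a separating hyperplane for $A$, from which the nearest point in $A$ to $x$ can be extracted with Euclidean distance comparable to $d_T$.

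The final step applies Steps~1--2 to sublevel sets. Let $m$ be a median of $F(\xi)$ and fix $t > 0$. For the upper tail, take $A = \{y : F(y) \le m\}$, which is convex by convexity of $F$. If $F(x) > m + t$, then every $y \in A$ obeys $F(x) - F(y) > t$, so $\|x-y\|_2 \ge t/L$ by the Lipschitz hypothesis; combined with Step~2 this forces $d_T(x,A) \ge t/(2L)$. Markov's inequality together with Step~1 then gives
\[
\pr\!\bigl(F(\xi) > m+t\bigr) \;\le\; \pr\!\bigl(d_T(\xi,A) \ge t/(2L)\bigr) \;\le\; \mu(A)^{-1}\, e^{-t^2/(16L^2)} \;\le\; 2\, e^{-t^2/(16L^2)}.
\]
For the lower tail, set $A' = \{y : F(y) \le m-t\}$ (still convex); the identical Lipschitz argument shows $\{F \ge m\} \subseteq \{d_T(\cdot,A') \ge t/(2L)\}$, and since $\pr(F \ge m) \ge 1/2$, Markov's inequality now forces $\mu(A') \le 2\, e^{-t^2/(16L^2)}$. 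Summing the two tails yields the claimed bound. The main obstacle is Step~1, the convex distance inequality itself, whose inductive proof requires a careful Cauchy--Schwarz splitting in the last coordinate; modulo this input (which is standard and available from \cite{Talagrand-newlook}), Steps~2 and~3 amount to a routine combination of separation-of-convex-sets geometry and a union bound.
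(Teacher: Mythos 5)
The paper does not actually prove this statement --- it is imported verbatim from Talagrand (the cited Theorem 6.6) --- so there is no internal proof to compare against; your argument is essentially the standard derivation behind that citation: Talagrand's convex distance inequality, the geometric comparison $\mathrm{dist}_2(x,A)\le 2\,d_T(x,A)$ for closed convex $A\subseteq[-1,1]^d$, and the two-sided sublevel-set argument at the median. All three steps are sound. In particular, your weighted distance with $|x_i-y_i|/2$ is dominated by the usual convex (weighted Hamming) distance because the coordinates lie in $[-1,1]$, so Step 1 follows from the standard inequality without redoing the induction; Step 2 is the usual minimax argument (sublevel sets of a convex continuous $F$ are closed convex, so the barycenter trick applies); and the bookkeeping in Step 3 correctly uses $\mu(A)\ge \tfrac12$ for the upper tail and $\pr(F(\xi)\ge m)\ge\tfrac12$ for the lower tail.

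One discrepancy is worth flagging: your argument yields $4\exp(-t^2/(16L^2))$, which is the correct, scale-consistent form of the inequality, whereas the theorem as printed has $-t^2/(16L)$ in the exponent; the latter appears to be a typo, and for $L>1$ your (correct) bound is weaker than the printed one, so it does not literally imply it. This is immaterial for the paper, which applies the theorem only to the $1$-Lipschitz convex map $A\mapsto\|A\|_{\mathrm{HS},k}$, i.e.\ with $L=1$, where the two forms coincide.
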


We further need some control on the spectral gap of $\Anp$, as in the 
following result about the spectral norm of sparse Wigner matrices 
(whose root goes back to
\cite{FuKo81}).
\begin{lemma}[\!\! cf.\ {\cite[Theorem 3.2]{BBK}}, {\cite[Example 4.10]{LHY}}]
\label{lem:gap}
Let $\jay$ be as in \eqref{def:jay}.
For any $\kappa>0$ there exists $C_\kappa < \infty$ such that 
$C_\kappa \to 4$ as $\kappa \to \infty$ and
if $\kappa \log\Verts\le \Verts p\le \Verts/2$, then
\[
\e \|\Anp-p \jay\|_\op \le \frac{C_\kappa}{2}\sqrt{\Verts p} \,.
\]
\end{lemma}

\begin{proof}[Proof of \Cref{lem:HSk}]
Since $\|X_{\le k}\|_\HS \le \sqrt{k} \|X\|_\op$,  \Cref{lem:gap} and Markov's inequality yield
\begin{equation}	\label{HSk:goal}
\pr \big(\, \|(\Anp-p\jay)_{\le k}\|_\HS \le  C_\kappa \sqrt{k \Verts p} \,\big) \ge \frac{1}{2} \,.
\end{equation}
The bound  \eqref{HS:bound} then follows by Theorem \ref{thm:Talagrand}
as the mapping $A\mapsto \|A_{\le m}\|_{\HS}$ is 
convex and 1-Lipschitz with respect to $\|\cdot\|_{\HS}$. Next, with
$A-A^{(\rnk-1)} = (A-p\jay)_{\ge \rnk}$ (see \eqref{def:Ak}),   
by the left inequality in \eqref{lambdaR.bd}, 
\[
\|A-A^{(\rnk-1)}\|_\op = \|(A-p\jay)_{\ge \rnk}\|_\op \le k^{-1/2} \| (A-p\jay)_{\le \rnk} \|_{\HS} 
\]
and \eqref{approx:op} follows from  \eqref{HS:bound}.
\end{proof}

\section{Proof of regularity and counting lemmas for random graphs}\label{sec:reg-proof}

\subsection{Proof of Proposition \ref{prop:RLRG}}\label{sec:RLRG} $~$
Recall \Cref{lem:net1} that for any $1 \le k \le n$ and $\errRLs \in (0,1]$, the set
$\Sym_{\Verts,\rnk }(\R) \cap \B_\HS(\Verts)$ has a $3\Verts\errRLs$-net $\cN$
in the Hilbert--Schmidt norm with
\begin{equation}	\label{logcN.bd}
\log |\cN| \le  \rnk(\Verts+2)\log(3\Verts/\errRLs) \, .
\end{equation}
Hence, in view of \eqref{def:Ak}, for any $A\in \cA_\Verts$ there exists $Y\in \cN$ such that
\begin{equation}	\label{RLRG:above}
\|A^{(k)} - (p\jay+Y)\|_\op\le \|A^{(k)}-(p\jay+Y)\|_\HS \le 3\Verts\errRLs . 
\end{equation}
In particular, by \eqref{RLRG:above} and the triangle inequality, for any  
$C,r>0$, setting
\[
\cE_0 = \Big\{ A\in \cA_\Verts :  \|A-A^{(k)}\|_\op \ge C \sqrt{\Verts p} + r \Verts \errRLs  \Big\}\,,
\]
any enumeration $\{Y_j\}_{j=1}^\Net$ of those $Y\in \cN$ for which 
\[
\cB_Y := \Big\{ X\in \cX_\Verts: \|X-(p\jay+Y)\|_\op \le C \sqrt{\Verts p} + 
(r+3) \Verts \errRLs \Big\}\,,
\]
intersects $\cA_\Verts$, induces a covering of $\cA_\Verts\setminus \cE_0$ by
pairwise disjoint sets $\cE_j\subseteq\cA_\Verts\cap \cB_{Y_j}$
where \eqref{RLRG:a} holds thanks to \eqref{logcN.bd} and
\eqref{RLRG.diam} follows from the definition of $\cB_{Y_j}$.
Setting $C=C_1$ of \Cref{lem:HSk} and $r=4$, it follows from \eqref{approx:op} 
with $\rnk+1$ in place of $\rnk$ and $t=\sqrt{\rnk} r \Verts \errRLs$, that \eqref{RLRG:b} holds whenever
$\rnk \errRLs^2 \ge K p^\Delta \log(1/p)$, as claimed.
\qed

\subsection{Proof of \Cref{prop:countingB}} \label{sec:fluct} 
We begin with the following crude bound on the directional derivatives of $\hom(H, \cdot)$.
\begin{lemma}[Derivatives of homomorphism counts]
\label{lem:derivatives}
For $W,Z\in \Sym_\Verts(\R)$ and a simple graph $H=(V,E)$, 
the directional derivative of $\hom(H, \cdot)$ at $W$ in the direction $Z$ is
\begin{equation}
\cD_H(W,Z) := \big\langle Z \,, \nabla\hom(H, W) \big\rangle_\hs 
=  \sum_{1\le i<j\le \Verts}Z_{ij}\partial_{W_{ij}} \{ \hom(H, W) \} \,.	\label{def:D}
\end{equation}
Fixing a non-empty simple graph $H=([\verts],E)$,  for $v \in V$ 
let $H_{(v)}$ denote the induced subgraph of $H$ on the vertices 
$V\setminus \{v\}$. Then, if $W \in \cX_\Verts$, 
\begin{equation}\label{eq:bd-D}
|\cD_H(W,Z)| \le  \|Z\|_{\op} 
\sum_{\{v_1,v_2\} \in E}  \sqrt{ \hom(H_{(v_1)}, W) \hom(H_{(v_2)},W)} 
\,. 
\end{equation}
\end{lemma}
\begin{proof} 
For $\bs{i}=(i_1,\dots, i_{\verts})\in[\Verts]^{\verts}$, $W\in \Sym_\Verts(\R)$, 
and $E'\subseteq E$, we denote
\[
W_{E'}(\bs{i}) := \prod_{e'=k'l' \in E'} W_{i_{k'},i_{l'}}, \qquad W_{\emptyset}(\bs{i}):= 1\,,
\]
so that
\[
\hom(H, W) = \sum_{\bs{i} \in [\Verts]^{\verts}} W_{E} (\bs{i})
\,. 
\]
All directional derivatives are zero when $E=\emptyset$. 
Thus, assuming \abbr{wlog} that $\verts \ge 2$ and $m=|E| \ge 1$, 
from \eqref{def:D} we can express the directional derivative as a sum over ``labeled" homomorphism counts in which all but one of the edges are labeled by entries of $W$, with the remaining edge labeled by an entry of $Z$:
\begin{align*}
\cD_H(W,Z) = \sum_{e \in E} \hom(H, \Lab^e) \,, \qquad 
\hom(H, \Lab^{\{k,l\}}) := \sum_{\bs{i} \in [\Verts]^\verts}\, Z_{i_ki_l} 
W_{E \setminus \{k,l\}} (\bs{i})
\,.
\end{align*} 
Hence, it suffices to show that for any $W \in \cX_\Verts$ and 
$e = \{v_1,v_2\} \in E$,
\begin{equation}\label{eq:bd-Le}
| \hom(H, \Lab^e) | \le \|Z\|_{\op} \sqrt{ \hom(H_{(v_1)}, W)
 \hom(H_{(v_2)}, W) }  \,.
\end{equation}
To this end, \abbr{wlog} take $e=\{1,2\}$ and partition $E$ to 
$\{e\},E_1,E_2,E_3$, where for $j=1,2$, we denote by $E_j$ the 
set of edges incident to vertex $j$ in the graph $H$, with the exception 
of $e$.  With $W_{E_3}(\bs{i})$ independent of $i_1,i_2$, 
we have that
\[
\hom(H, \Lab^{\{1,2\}}) 
= \sum_{i_3,\dots, i_\verts\in [\Verts]} W_{E_3}(\bs{i}) \sum_{i_1,i_2\in [\Verts]} 
W_{E_1}(\bs{i}) \, Z_{i_1,i_2} \, W_{E_2}(\bs{i}) \,.
\]
Further, for any fixed $i_3,\dots, i_\verts$, the value of $W_{E_1}(\bs{i})$ 
depends only on $i_1$, with $W_{E_2}(\bs{i})$ depending only on 
$i_2$. The inner sum is thus a quadratic form in $Z$, yielding that
\begin{align*}
\bigg| \sum_{i_1,i_2\in [\Verts]} W_{E_1}(\bs{i}) Z_{i_1,i_2} W_{E_2}(\bs{i})\bigg| 
&\le \|Z\|_\op \Big( \sum_{i_1\in [\Verts]} W_{E_1}(\bs{i})^2\Big)^{1/2} 
\Big( \sum_{i_2\in [\Verts]} W_{E_2}(\bs{i})^2\Big)^{1/2}  \\
& \le \|Z\|_\op 
\Big( \sum_{i_1\in [\Verts]} W_{E_1}(\bs{i}) \Big)^{1/2} 
\Big( \sum_{i_2\in [\Verts]} W_{E_2}(\bs{i}) \Big)^{1/2} 
\,,
\end{align*} 
where in the last inequality we used the fact that  
$W_{E'}(\bs{i}) \in [0,1]$ for any $\bs{i}$, $E'$ and $W \in \cX_\Verts$.
Consequently, by the above bound and Cauchy--Schwarz,
\begin{align*}
|\hom(H, \Lab^{\{1,2\}}) |
&\le \sum_{i_3,\dots, i_\verts\in [\Verts]} W_{E_3}(\bs{i}) \bigg| \sum_{i_1,i_2\in [\Verts]}  W_{E_1}(\bs{i}) Z_{i_1,i_2} W_{E_2}(\bs{i}) \bigg| \\
&\le  \|Z\|_\op  
 \Big( \sum_{i_1,i_3,\dots, i_\verts\in [\Verts]} W_{E_3} (\bs{i}) W_{E_1}(\bs{i}) \Big)^{1/2}
\Big( \sum_{i_2,i_3,\dots, i_\verts\in [\Verts]} W_{E_3} (\bs{i}) W_{E_2} (\bs{i}) \Big)^{1/2}  \\
& =  \|Z\|_\op  \Big( \hom(H_{(2)},W) \Big)^{1/2} 
\Big( \hom(H_{(1)},W) \Big)^{1/2} 
\,.
\end{align*}
The same holds for any $e \in E$, resulting with \eqref{eq:bd-Le}
and thereby with \eqref{eq:bd-D}.
\end{proof}

For any set $\cB\subseteq \cX_\Verts$ and any graph $F$ 
(including 
when $\edges(F)=0$),  
we trivially have that
\begin{equation}	\label{MaxBF.trivial}
\bd(F;\cB) := \sup_{X\in \cB} \{ \hom(F, X) \} \le \bd(F;\cX_\Verts) \le \Verts^{\verts(F)} \,.
\end{equation}
We also have the following immediate consequence of 
Lemma \ref{lem:derivatives}.
\begin{lemma}
\label{lem:fluct}
For any non-empty simple graph $F$ and convex set $\cB\subseteq\cX_\Verts$,
\begin{equation}\label{fluct.goal}
\Fluct(F;\cB) \le  \, \sup_{X,Y \in \cB} \{ \|X-Y\|_{\op} \}  \, 
\sum_{\{v_1,v_2\} \in E(F)}  \sqrt{ \bd(F_{(v_1)};\cB) \bd(F_{(v_2)};\cB) 
 \,.}
\end{equation}
\end{lemma}
\begin{proof}
Fixing $X,Y \in \cB$, for $t \in [0,1]$ let $W_t = (1-t) Y + t X$. Note that
\[
\hom(F, X) - \hom(F, Y) =  \int_0^1 \frac{d}{dt} \{ \hom(F, W_t) \} dt = 
\int_0^1 \cD_F(W_t,X-Y) dt \,.
\]
Applying the bound \eqref{eq:bd-D} on the expression on the \abbr{rhs},
\[
|\hom(F, X)-\hom(F, Y)| \le  \|X-Y\|_{\op}  \int_0^1 \sum_{\{v_1,v_2\} \in E(F)} 
 \sqrt{\hom(F_{(v_1)},W_t) \hom(F_{(v_2)},W_t)} \, dt\,.
\]
Since $\cB$ is convex, $W_t\in \cB$ for all $t \in [0,1]$. Hence
\[
\hom(F_{(v)},W_t) \le \bd(F_{(v)};\cB)
\]
and \eqref{fluct.goal} follows by combining the previous two displays.
\end{proof}
We proceed to establish Proposition \ref{prop:countingB} by 
iterating the preceding lemma (thereby sharpening 
the argument from \cite[Lemma 5.4]{ChDe14}).

\begin{proof}[Proof of Proposition \ref{prop:countingB}]
 Set $\fun_{1} (\cdot)=\fun(\cdot)+1$,
$\fun(k)=k$ for $k \le \Delta_\star$ and thereafter set 
$\fun(k)=k \fun_1(k-1)$ recursively, to guarantee that for any subgraph 
$F$ of $H$ with $\edges(F) > \Delta_\star$
\begin{equation}\label{def:c-edges}
\sum_{\{v_1,v_2\} \in E(F)} \, 
\sqrt{\fun_1(\edges(F_{(v_1)})) \fun_1(\edges(F_{(v_2)}))} \le \fun(\edges(F)) \,.
\end{equation}
By \eqref{def:Deltas} we have for any 
$e=\{v_1,v_2\} \in E(F)$ and $F \le H$,
\begin{equation}\label{bd:e-gap}
\Delta_\star + \edges(F_{(v_1)})/2 + \edges(F_{(v_2)})/2 \ge  \edges(F) \,.
\end{equation}
To establish \eqref{eq:bd-fluct} we show that
\begin{equation}	\label{eq:bd-fluct1}
\Fluct(F;\cB) \le \eps_0 f(\edges(F)) K \Verts^{\verts(F)}p^{\edges(F)}
\end{equation}
by induction on 
$\edges(F)$. To this end, note that from
Lemma \ref{lem:fluct} together with 
\eqref{op-diam-cB} and
\eqref{MaxBF.trivial},  
we have for any nonempty graph $F$, 
\[
\Fluct(F;\cB) \le  \eps_0 \Verts p^{\Delta_\star} \sum_{e\in E(F)}  
\Verts^{\verts(F)-1}  = 
\eps_0 \, \edges(F) \Verts^{\verts(F)} p^{\Delta_\star} \,.
\]
This also holds trivially in the case $E(F)=\emptyset$ for which 
$\Fluct(F;\cB)=0$, thereby establishing \eqref{eq:bd-fluct1}
for any $F  \le  H$ having $\edges(F) \le \Delta_\star$. 
Next, let $k\in \{\Delta_\star+1,\ldots,\edges(H)\}$ and assume 
inductively that \eqref{eq:bd-fluct1} holds whenever $F\prec H$ 
has $\edges(F) < k$. For such $F$ we then have
from \eqref{homF-min} and the triangle inequality that
\begin{align*}
\bd(F;\cB) &\le \inf_{X \in \cB} \{ \hom(F, X) \} + \Fluct(F;\cB) \\
& \le K\Verts^{\verts(F)}p^{\edges(F)} + \Fluct(F;\cB) 
\le \fun_1(\edges(F)) K \Verts^{\verts(F)}p^{\edges(F)} \,.
\end{align*}
Considering  
$F\preccurlyeq H$
with $\edges(F)=k$, the preceding applies to all $\{F_{(v)}, v \in V(F)\}$. 
Hence, by Lemma \ref{lem:fluct},
\begin{align*}
\Fluct(F;\cB) &\le \eps_0 \Verts p^{\Delta_\star} \sum_{\{v_1,v_2\}\in E(F)}  
\sqrt{\fun_1(\edges(F_{(v_1)})) \fun_1(\edges(F_{(v_2)}))} K \Verts^{\verts(F)-1} 
p^{\edges(F_{(v_1)})/2} p^{\edges(F_{(v_2)})/2} \\
&\le \eps_0  \, \fun(\edges(F)) \, K \Verts^{\verts(F)} p^{\edges(F)} \,,
\end{align*}
as claimed, where in the second inequality we have used
\eqref{def:c-edges} and \eqref{bd:e-gap}.
\end{proof}

\section{Proof of Theorem \ref{thm:hom}: Upper tail for general homomorphism counts} 
\label{sec:hom}

\subsection{Preliminary lemmas}
\label{sec:hom.prelim} 

We will establish the lower bound on $\RR_{\Verts,p}$ as stated in \Cref{thm:hom} by combining 
Propositions \ref{prop:RLRG} and \ref{prop:countingB} with \Cref{cor:covering}.
In doing so, we shall require the following rough bounds on $\upphi_{\Verts,p}$
for showing that the complement of  
$\bigcap_{F\prec H} \cL_F(K)$ is of negligible probability. 
\begin{lemma}
\label{lem:phiH.rough}
Let $F$ be a graph with $\Delta(F)\ge 2$. For $\theta_F(\cdot)$ of \Cref{thm:BGLZ},
any fixed $u > 0$ and $\Verts^{-1} \ll p^{\Delta(F)} \ll 1$,
\begin{equation}	\label{rough.UB}
\upphi_{\Verts,p}(F,u) \le (1+o(1))\theta_F(u) \Verts^2 p^{\Delta(F)} \log(1/p) \,.
\end{equation}
Further, for some absolute constant $p_0>0$ and all $p\in (0,p_0]$,
\begin{equation}	\label{rough.LB}
\upphi_{\Verts,p}(F,u) \ggs ((1+u)^{1/\edges(F)} -1)^{\Delta(F)} \Verts^2 p^{\Delta(F)} \log(1/p).
\end{equation}
\end{lemma}
\begin{proof} The bound \eqref{rough.UB} is merely \cite[Proposition 2.1(b)]{BGLZ16}. 
Dropping hereafter
the dependence on $F$ in $\verts(F),\edges(F), \Delta(F)$, and fixing
$X\in \cX_\Verts$ with $\hom(F,X)\ge (1+u) \Verts^\verts p^\edges$, it suffices 
to bound $I_p(X)$ below by the right hand side of \eqref{rough.LB}.
Since increasing any entry $X_{ij}\in [0,p)$ to $p$ decreases $I_p(X)$ while increasing $\hom(F, X)$, 
without loss of generality 
$U:= X-p\jay$ has non-negative entries, 
and thereby 
\cite[Corollary 3.5]{LuZh14} implies that for some absolute constant $p_0>0$ and all $p \in (0,p_0]$, 
\[
I_p(X) \ge \frac{1}{2} \|U\|^2_{\HS}
I_p( 1-1/ \log(1/p)) \ggs   \|U\|_{\HS}^2\log(1/p) \,. 
\]
Thus, it only remains to show that 
\begin{equation}	\label{rough.goal}
\|U\|_{\HS}^2 \ge  ((1+u)^{1/\edges} -1)^{\Delta}n^2 p^{\Delta}.
\end{equation}
To this end, recall  the following special case of Finner's generalized H\"older inequality 
\begin{equation}	\label{finner}
\Big[ \int_{\Omega_o^{\verts}} \prod_{\{a,b\}\in E} |f(x_a,x_b)| \mu_o(dx_1)\cdots \mu_o(dx_{\verts}) 
\Big]^{1/\edges} \le \|f\|_{L_{\Delta}(\Omega_o^2, \mu_o^{\otimes 2})}
\end{equation}
for any  graph $F=([\verts],E)$ with $|E|=\edges$ and maximal degree $\Delta$, any
probability space $(\Omega_o,\mu_o)$ and 
$f\in L_{\Delta}(\Omega_o^2, \mu_o^{\otimes 2})$ (cf.\ \cite[Theorem 4.1]{BGLZ16}). 
In particular, taking the uniform measure $\mu_o$ on $\Omega_o:=[\Verts]$ and 
identifying elements $Y\in \cX_\Verts$ with functions 
$\Omega_o^2 \ni (i,j)\mapsto Y_{ij}$, it follows from \eqref{finner} that  
\[
\Big(\frac1{\Verts^\verts}\hom(F,X)\Big)^{1/\edges} \le  \| X\|_{L_\Delta(\Omega_o^2,\mu_o^{\otimes2})} 
\le  p + \|U\|_{L_\Delta(\Omega_o^2,\mu_o^{\otimes2})} 
\le  p + \big(\Verts^{-1} \|U\|_{\HS} \big)^{2/\Delta} \,,
\]
where in the second bound we used the triangle inequality, and in the third that $U_{ij}\in [0,1]$ and $\Delta\ge2$. 
Combining with our assumption $\hom(F,X) \ge (1+u)\Verts^\verts p^\edges$ and rearranging yields \eqref{rough.goal} and hence the claim.
\end{proof}
\begin{remark} Since any independent set in the induced subgraph $F^\star$ 
on the vertices of degree $\Delta(F)$ is of size at most $\edges(F)/\Delta(F)$, 
it follows from \eqref{rough.UB}-\eqref{rough.LB}  that for $u$ sufficiently large,
\[
\upphi_{\Verts,p}(F,u)\asymp u^{\Delta(F)/\edges(F)} \Verts^2p^{\Delta(F)}\log(1/p) \,.
\]
\end{remark}

In preparation for lower bounding the large deviation rates for super-level sets of $\hom(H,\cdot)$, 
we first derive a general lower bound for the product Bernoulli($p$) measure $\mu_p$.
\begin{lemma}	\label{lem:LB}
Identifying $\cA_\Verts$ with $\{0,1\}^{\Verts \choose 2}$, let $\mu_Q$ 
denote the law induced on $\cA_\Verts$ by the product of Bernoulli($Q_{ij}$) measures.
For some $\cstar<\infty$, any $p\in (0,\frac12]$, 
$Q \in \cX_\Verts$ and $\cB\subset\cA_\Verts$, 
\begin{equation}\label{eq:LB-bernoulli}
\log\mu_p(\cB) \ge -I_p(Q) + \log\mu_Q(\cB) -  \frac{\Verts \log (\cstar/p)}{2 \sqrt{2 \mu_Q(\cB)}}  \,.
\end{equation}
\end{lemma}
\begin{proof} 
Since \eqref{eq:LB-bernoulli} trivially holds when $\mu_Q(\cB)=0$, we assume 
that $\mu_Q(\cB)>0$. For any probability measures 
$\Q \ll \pr$ 
with 
$H(\Q \| \pr) = \int_\cX  d\Q \log\frac{d\Q}{d\pr}$ finite, set
$Y=\log \frac{d\Q}{d\pr} - H(\Q \| \pr)$, noting that if $\cB \subseteq \cX$ with 
$\Q(\cB)>0$, then for any $s \ge 1$, by Jensen and H\"older inequalities,
\begin{align}
H(\Q \| \pr) + \log\pr(\cB)  &\ge H(\Q \| \pr) + \log \int_\cB \frac{d\pr}{\d\Q} d\Q   
= \log \int_{\cB} e^{-Y} d\Q \nonumber \\
&\ge  \log \Q(\cB) - \frac1{\Q(\cB)} \int_{\cB} Y d\Q \ge  \log \Q(\cB) 
-  \frac{\|Y\|_{L_s(\Q)}}{\Q(\cB)^{1/s}} \,.
\label{eq:ent-lbd}
\end{align}
In particular, for $\Q=\mu_Q$ and $\P=\mu_p$ one has that at any $A \in \cA_n$,
\begin{align*}
Y  &=  \sum_{i<j} (Q_{ij} - A_{ij})  \Big( \log \frac{(1-Q_{ij})}{(1-p)} - \log \frac{Q_{ij}}p \Big) =  \frac{1}{2} \langle  Q-A, M
\rangle_\HS\,,
\end{align*}
for $M \in \Sym_\Verts^0(\R)$ with entries 
\[
M_{ij} := \gamma_p(Q_{ij}) :=    \log \frac{(1-Q_{ij})}{Q_{ij}} - \log \frac{(1-p)}{p} \,.
\]
Thus, with $H(\mu_Q\|\mu_p)=I_p(Q)$ and
\begin{align*}
\| Y \|^2_{L_2(\mu_Q)}=  \sum_{i<j}  \Var_{Q} (X_{ij}) M_{ij}^2
= \sum_{i<j} Q_{ij}(1-Q_{ij}) \gamma_p(Q_{ij})^2,
\end{align*}
we arrive at \eqref{eq:LB-bernoulli} upon verifying that
\begin{equation}\label{eq:co-bd}
\sup_{x \in [0,1]} \{ 2 \sqrt{x(1-x)} |\gamma_p(x)|\} \le \log (\cstar/p) \,,
\end{equation}
for some $\cstar<\infty$ 
and all $p \in (0,\frac12]$.
\end{proof}
We have the following direct corollary for spectral-norm balls.
\begin{cor}\label{lem:LB.ball}
For some $C_0<\infty$, any $p \in (0,\frac{1}{2}]$ and all $Q \in \cX_\Verts$, 
\begin{equation}\label{lbd-ballQ}
\mu_p( \| A - Q \|_\op  \le C_0 \sqrt{\Verts}) \ge \frac{1}{2} e^{-I_p(Q)} e^{-\frac{\Verts}{2} \log(\cstar/p)} \,.
\end{equation}
Further, for $Q=r \jay_\Verts$ and $\frac{1}{\Verts} \log \Verts \le r \le p \le \frac{1}{2}$, the bound 
\eqref{lbd-ballQ} holds with $C_1 \sqrt{r}$ and $4 \sqrt{2p}$ replacing $C_0$ and  
$\log(\cstar/p)$, respectively.
\end{cor}
\begin{proof} Letting $\tAn$ have distribution $\mu_Q$, the
centered symmetric random matrix $\tAn-Q$
has independent entries bounded in modulus by one. It is standard that
for some $C_0<\infty$ and all $Q \in \cX_n$ 
\begin{equation}\label{eq:mean-op}
\e \|\tAn-Q\|_\op \le \frac{C_0}{2} \sqrt{\Verts}  
\end{equation}
(e.g. see  
\cite[Theorem 4.8]{LHY}).  Thus,
$\mu_Q(\cB^c) \le 1/2$ for $\cB=\{A
: \|A-Q\|_\op \le C_0 \sqrt{\Verts} \}$,
and \eqref{lbd-ballQ} follows from \Cref{lem:LB}. Considering
\Cref{lem:gap} (for $p=r$), we see that when $Q=r \jay_\Verts$ the bound
\eqref{eq:mean-op} holds with $C_0=C_1 \sqrt{r}$, hence the same applies in \eqref{lbd-ballQ}.
In addition, for $Q=r \jay_\Verts$ one needs only consider $x=r$ in \eqref{eq:co-bd}. Thus, 
with $r \le p \le \frac{1}{2}$ yielding $0 \le \gamma_p(r) \le \log (2p/r)$, we can replace the bound \eqref{eq:co-bd} by
\[
2\sqrt{r(1-r)} \gamma_p(r) \le 4 \sqrt{r} \log (\sqrt{2p/r}) \le 4 \sqrt{2p} \,,
\]
with the corresponding improvement in \eqref{lbd-ballQ}.
\end{proof}

\subsection{Proof of Theorem \ref{thm:hom}: lower bound}
\label{sec:hom.LB}
$~$
We proceed
to prove that for any fixed finite graph $H$, $0<\eps<1/2$ and $u \ge 2 \eps$,
assuming $1 \gg p^{2\Delta_\star(H)}\gg \Verts^{-1}\log\Verts$, one has
\begin{equation}	\label{hom.LB.goal}
\RR_{\Verts,p}(H,u) \ge (1-\eps) \upphi_{\Verts,p}(H, u-\eps)
\end{equation}
for all $\Verts$ sufficiently large depending only on $H$, $u$ and $\eps$. 
To this end, note first that if $H$ is the 
disjoint union of graphs $F$ and $F'$ with $\edges(F')=0$, then for any $X\in \cX_\Verts$,
\begin{equation}\label{eq:isolated}
\hom(H, X) = \Verts^{\verts(F')}\hom(F,X),
\end{equation}
so it suffices to establish \eqref{hom.LB.goal} for the graph $F$ (and  \eqref{hom.LB.goal} 
trivially holds if $\edges(H)=0$, i.e. $H=F'$). Assuming hereafter that $H$ has no isolated vertices, 
we prove \eqref{hom.LB.goal} by induction on $\edges(H)$. For 
$\edges(H)=1$, 
necessarily $H={\sf K}_{1,1}$, a single edge. Since $\hom({\sf K}_{1,1},\Anp)= \frac12\1^\tran \Anp\1$ is a linear 
functional on $\Anp$, the set $\{X\in \cX_n: \hom({\sf K}_{1,1},X)\ge (1+u) \Verts^2p\}$ is convex and
\eqref{hom.LB.goal}, even at $\eps=0$, is a consequence of \Cref{prop:cvx}. Upon mapping 
$u \mapsto (1+u)^{1/\edges(H)}-1$, this extends to any such graph with $\Delta(H)=1$, since then 
\begin{equation}	\label{K11tensorize}
\hom(H,X) = \hom({\sf K}_{1,1},X)^{\edges(H)} \,.
\end{equation}
Further, from \eqref{K11tensorize} and standard tail estimates for the Binomial(${n \choose 2},p$) law,
it follows that for some $c>0$, any $K \ge 2$ and all graphs $F$ with $\Delta(F)=1$,
\begin{equation}\label{lem:Delta1}
\pr\big( \Anp \notin \cL_F(K^{\edges(F)}) \big) 
= \pr(\hom({\sf K}_{1,1},\Anp) > K \Verts^2 p) \le e^{-c K \Verts^2 p} 
\end{equation}
(recall the definition \eqref{def:LH} of $\cL_F(\cdot)$).

Now assume $\Delta(H) \ge 2$ and that \eqref{hom.LB.goal} holds, 
with $H$ replaced by $F$, for every fixed graph $F$ with $\edges(F)<\edges(H)$, provided
$p^{2 \Delta_\star(F)} \gg n^{-1} \log n$. For 
$K_0,K_1 \ge 2$ to be chosen sufficiently large depending only on $H$ and $u$,
we invoke \Cref{prop:RLRG} with $K=K_0$, $\Delta=\Delta(H)$ and  
\begin{equation}\label{set.rnk}
\rnk = \lceil K_0 (p^{\Delta(H)}/\errRLs^2) \log(1/p) \rceil \,,
\end{equation}
to obtain a partition 
$\cA_\Verts=\bigsqcup_{j=0}^\Net \cE_j$ with the stated properties \eqref{RLRG:a}--\eqref{RLRG.diam}.
Aiming at \Cref{prop:countingB} with $K=K_1$ and $\eps_0=\eps/(K_1 C_H)$,
we take in turn 
\begin{equation}\label{set.delta}
\errRLs = \frac{\eps_0}{4 C_\star} p^{\Delta_\star(H)}  
\end{equation}
(for $C_\star$ of \Cref{prop:RLRG}). Towards applying  \Cref{cor:covering}
we specify the exceptional set as
\begin{equation}\label{def:cE}
\cE =\cE(H, \eps, K_0,K_1) := \cE_0  \cup \cE_H(K_1) ,
\end{equation}
where $\cE_H(K_1) := \cX_n \setminus \bigcap_{F\prec H}  \cL_F(K_1)$.
Denoting by $\cC_j \subseteq \cX_\Verts$, $j \ge 1$, the closed 
convex hull of $\cE_j$ and taking
\begin{equation}\label{def:cI}
\cI = \big\{ j\in [\Net]: \cC_j
\cap (\cA_\Verts \setminus \cE) \ne \emptyset\big\}\,,
\end{equation}
we have
\begin{equation}\label{claim1.hom}
\cA_\Verts \setminus \cE\subseteq \bigcup_{j\in \cI} \cC_j \,.
\end{equation}
Since $n p^{2\Delta_\star(H)-1} \to \infty$, clearly $\sqrt{n p} \le \errRLs n$ for our choice of  $\errRLs$ 
and all $n$ large enough, in which case, by \eqref{RLRG.diam} and the triangle inequality,  for any $j \ge 1$,
\[
\max_{X,Y \in \cE_j} \{ \|X-Y\|_{\op} \} \le 2 C_\star (\sqrt{np}+ \errRLs n) \le \eps_0 n p^{\Delta_\star(H)} \,.
\]
Further, by \eqref{def:cE} and \eqref{def:cI}, each of the convex sets $\cC_j$, $j \in \cI$,
intersects $\cap_{F\prec H} \cL_F(K_1)$. Thus, from 
\eqref{eq:bd-fluct} and our choice of $\eps_0 \le 1$ we deduce that
\[
\big| \hom(H,X)-\hom(H,Y) \big| \le 
\eps  \Verts^{\verts(H)}p^{\edges(H)}\,, \qquad \forall j\in \cI, \;\;  \forall X,Y\in \cC_j \,.
\]
Hence, we can apply \Cref{cor:covering} with $h=\Verts^{-\verts(H)}p^{-\edges(H)}\hom(H,\cdot)$ to obtain
\begin{align}	\label{from2.2}
\pr(\hom(H,\Anp)\ge (1+u) \Verts^{\verts(H)}p^{\edges(H)}) \le \mu_p(\cE) + \expo{ \log \Net
-\upphi_{\Verts,p}(H, u-\eps) }\,,
\end{align}
where in view of \eqref{set.rnk}, for $\eta_n:=K_0 \Verts^{-1} \errRLs^{-2} \log(\Verts/\errRLs)$, 
\begin{align}	\label{bd.J}
\log \Net \le \rnk (\Verts+2) \log (\frac{3 \Verts}{\errRLs} ) 
\,\ls\, \eta_n \, \Verts^2 p^{\Delta(H)} \log(1/p) \,.
\end{align}
Next, note that $\eta_n \to 0$ thanks to \eqref{set.delta} and
our assumption that $p^{2\Delta_\star(H)}\gg \Verts^{-1}\log\Verts$. Hence, comparing 
the right hand sides of \eqref{bd.J} and \eqref{rough.LB} with $F=H$, we have that 
\begin{equation}\label{bd.Nat}
\log \Net \ll \upphi_{\Verts,p}(H,u-\eps).
\end{equation}
Further, from \eqref{RLRG:b}, if $K_0=K_0(H,u)$ is large enough, then by  \eqref{rough.UB}, 
\[
\mu_p(\cE_0) \le 4 e^{- K_0 \Verts^2p^{\Delta(H)}\log(1/p)} \ll e^{-2 \upphi_{\Verts,p}(H,u)} \,.
\]
Combining this with 
\eqref{from2.2}, \eqref{bd.Nat}, \eqref{def:cE} and the union bound, it suffices to show that 
\begin{equation}	\label{hom.remains}
\pr(\Anp \in \cE_H(K_1)) \le \sum_{F\prec H} \pr(\Anp \notin \cL_F(K_1)) \ll e^{-2\upphi_{\Verts,p}(H,u+\eps)} \,.
\end{equation}
Turning to this task, note that 
$\Delta(F) \le \Delta(H)$ and $\Delta_\star(F)\le \Delta_\star(H)$ 
for each $F\prec H$. Since $H$ has no isolated vertices 
we additionally have $\edges(F)<\edges(H)$. Considering
\eqref{lem:Delta1} at $K=K_1^{1/\edges(F)}$ (in case $\Delta(F)=1$), or the induction hypothesis 
\begin{equation}	\label{LF.bd}
\pr( \Anp\notin \cL_F(K_1)) \le e^{- (1-\eps) \upphi_{\Verts,p}(F,K_1-1-\eps)} \,,
\end{equation}
in case $\Delta(F)\ge2$, together with \eqref{rough.LB} and \eqref{rough.UB},
it follows that 
\[
\pr( \Anp\notin \cL_F(K_1)) \ll e^{-2\upphi_{\Verts,p}(H,u+\eps)} \,,
\] 
for some $K_1 = K_1(F,H,u)$
and all $n$ large enough. Taking the maximal value of $K_1$ 
among the finitely many $F \prec H$, completes the proof of 
\eqref{hom.remains} and thereby also of \eqref{hom.LB.goal}.

\subsection{Proof of \Cref{thm:hom}: upper bound}\label{sec:hom.UB}
We complete the proof of \Cref{thm:hom}  by complementing
\eqref{hom.LB.goal} with an asymptotically matching upper bound. Specifically, under the preceding 
assumptions on $H,u,\eps$ and $p=p(n)$,
for all $\Verts$ sufficiently large depending only on $H,u,\eps$, we have that 
\begin{equation}	\label{hom.UB.goal} 
\RR_{\Verts,p}(H,u) \le (1+\eps) \upphi_{\Verts,p} (H, u+ \eps) \, . 
\end{equation}
To this end, in view of \eqref{eq:isolated}--\eqref{K11tensorize} it again suffices to 
consider $H$ with no isolated vertices and for $\Delta(H)=1$ to deal only with $H={\sf K}_{1,1}$.
Further, by convexity of $I_p(\cdot)$, for any $p \ll 1$
\[
\upphi_{\Verts,p}({\sf K}_{1,1},u) = {\Verts \choose 2} I_p ((1+u)p) = \frac{1}{2}
\{ (1+u) \log (1+u) -u \} \Verts^2 p
(1+o(1)) \,,
\]
whereby \eqref{hom.UB.goal} follows from standard Binomial tail estimates.
Assuming hereafter that $H$ has maximal degree $\Delta(H) \ge 2$ and no isolated vertices, 
fix $K_1=K_1(H,u,\eps)$ 
so \eqref{hom.remains} holds, setting
$\cB_Q= \{ X\in \cX_\Verts: \|X-Q\|_\op \le C_0\sqrt{\Verts}\}$ (for $C_0$ of \Cref{lem:LB.ball}),
and $\cQ=\{Q\in \cX_\Verts: \cB_Q\subseteq \cE_H(K_1)\}$.
For any $Q\in \cQ$, we get by \eqref{lbd-ballQ}, \eqref{hom.remains} and monotonicity that 
\begin{align}\label{lbd-neglig-ball}
e^{-2\upphi_{\Verts,p}(H,u+\eps)} &\gg \pr(\Anp \in \cE_H(K_1)) 
 \ge  \mu_p(\cB_Q) \ge \frac{1}{2} e^{-I_p(Q)-\eta_n}  \,,
\end{align}
where $\eta_n := \Verts \log(\cstar/p) \ll \upphi_{\Verts,p}(H,u+\eps)$, 
since $n p^{\Delta(H)} \gg 1$ (see \eqref{rough.LB}). From \eqref{lbd-neglig-ball} we thus deduce
that $\inf\{I_p(Q): Q \in \cQ\} > \upphi_{\Verts,p}(H,u+\eps)$ and thereby for any $\eps\in (0,1)$,
\begin{equation}	\label{LB.except}
\upphi_{\Verts,p}(H,u+\eps) = \inf\{ I_p(Q) : Q \in \overline{\cL_H(1+u+\eps)^c}\cap\cQ^c\} \,.
\end{equation}
Setting again $\eps_0=\eps/(K_1 C_H)$, recall that $n p^{2\Delta_\star(H)} \gg 1$, hence
$C_0 \sqrt{n} \le \frac{\eps_0}{2} n p^{\Delta_\star(H)}$ for all $n$ large enough, in which case the 
convex set $\cB=\cB_Q$ satisfies \eqref{op-diam-cB}. By definition, for $Q \in \cQ^c$ such 
$\cB_Q$ also satisfies \eqref{homF-min}, hence it follows from \eqref{eq:bd-fluct} 
and our choice of $\eps_0$ that $\cB_Q\subset \overline{\cL_H(1+u)^c}$ for any 
$Q\in \overline{\cL_H(1+u+\eps)^c}\cap\cQ^c$. Thus, combining
\eqref{lbd-ballQ} (as in the right side of \eqref{lbd-neglig-ball}), and \eqref{LB.except}, we arrive at
\[
\pr(\Anp\in \overline{\cL_H(1+u)^c}) \ge \sup\{ \mu_p(\cB_Q) :  Q\in \overline{\cL_H(1+u+\eps)^c}\cap\cQ^c \} 
\ge \frac{1}{2} e^{- \upphi_{\Verts,p}(H,u+\eps) - \eta_n} \,,
\]
from which \eqref{hom.UB.goal} follows upon taking logarithms on both sides.

\section{Proof of Theorem \ref{thm:cycles}: Upper and lower tails for cycle counts}
\label{sec:cycles}

As we show next, Theorem \ref{thm:cycles} is  
a straightforward consequence of 
the following non-asymptotic tail bounds. 
\begin{theorem}[Quantitative large deviations for cycle counts]
\label{thm:cycles.quant} 
There are constants $c>0$ and $C'<\infty$ such that for any 
integer $\ell\ge3$, $\Verts^{-1/2}\le p\le 1/2$, $K\ge 2$,
$1\le \rnk\le \Verts$, we have for any $u>0$,
\begin{align}
&\pro{ \hom(\Cyc_\ell, \Gnp) \ge (1+u) \Verts^\ell p^\ell}  \le	
\expo{ - \upphi_{\Verts,p}(\Cyc_\ell, u - \eps_{\fluct} \big) + \Ecomplex } 
+ \pexcep \,,	\label{cyc.upper}
\end{align}
and for any $0\le u \le 1$,
\begin{align}
&\pro{ \hom(\Cyc_\ell, \Gnp) \le (1-u) \Verts^\ell p^\ell}  \le	
\expo{ - \uppsi_{\Verts,p}(\Cyc_\ell, u - \eps_{\fluct} \big) + \Ecomplex } 
+ \pexcep \,,	\label{cyc.lower}
\end{align}
where the \emph{fluctuation term} is
$\eps_{\fluct} = 3 \eps^\ell$ with
\begin{align}
\eps(K,\rnk ) :=  \frac{C'}{\Verts^{1/2-1/\ell}p^{1/2}} + 
\frac{\kappa_\ell K}{\rnk^{1/2-1/\ell}}	
\label{def:eps+}
\end{align}
and $\kappa_\ell$ as in Lemma \ref{lem:HSk-Salpha-tail},
the \emph{complexity term} is
\begin{align}
\Ecomplex (\rnk) &= O(\ell \rnk\Verts\log\Verts),	\label{def:Ecomplex}
\end{align}
and $\pexcep=\pexcep (K)$ is the 
exceptional probability from \eqref{def:excep.pm}.
\end{theorem}
The bounds \eqref{cyc.upper}--\eqref{cyc.lower} are the result of applying Corollary \ref{cor:covering} with a covering $\{\cB_i\}_{i\in \cI}$ of
$\cA_\Verts \cap \cG(K \Verts p, C' \sqrt{\Verts p})$,
throughout which the corresponding bound \eqref{tail.refined} holds. 
Thanks to Proposition \ref{prop:tail.refined}, the $\mu_p$-probability 
of its complement, exceptional set $\cE$, is at most $\pexcep$. 
The error term $\Ecomplex$ is $\log|\cI|$, which in our case is basically 
the metric entropy of $\Sym_{\Verts,\rnk }([0,1])$.

\begin{proof}[Proof of Theorem \ref{thm:cycles}]
Starting with \eqref{eq:upper-tail.cyc},
the first term in the definition \eqref{def:eps+}
of $\eps(K,\rnk )$ is $o(1)$ as long as $p\gg \Verts^{2/\ell-1}$. 
Fixing an arbitrarily slowly growing function $W=W(\Verts)$, we take 
\begin{equation}	\label{take.KR}
K= (W^2 \log(1/p))^{1/2}\,, \qquad \rnk= (W^4 \log\Verts)^{\ell/(\ell-2)} \, \,.
\end{equation}
Since $\Verts p \ge 1$, with these choices we have that 
\[
\frac{K}{\rnk^{1/2-1/\ell}} = \Big(\frac{W^2 \log(1/p)}{W^4 \log\Verts}\Big)^{1/2} \le W^{-1} =o(1)\,,
\]
hence also $\eps_{\fluct}=o(1)$. Furthermore, for such $K$,
\begin{equation}	\label{cycpf1}
\pexcep=C \exp(-cK^2\Verts^2p^2) = \exp(-\omega(\Verts^2p^2\log(1/p)) \,.
\end{equation}
For
\[
\frac{\big(W^6 \log\Verts\big)^{\frac{\ell}{2\ell -4}}}{\sqrt{\Verts}} \le p\le \Verts^{-1/10}
\]
we have
\begin{equation}\label{bd:large-ell}
\frac{\ell  \rnk\Verts\log\Verts}{\Verts^2p^2\log(1/p)} \asymp \frac{\rnk}{\Verts p^2}  
\le W^{-2} = o(1)\,,
\end{equation}
whereas for $\Verts^{-1/10}\le p\ll1$, 
\[
\frac{\ell  \rnk\Verts\log\Verts}{\Verts^2p^2\log(1/p)} \le\frac{\ell  \rnk\log\Verts}{\Verts p^2}\le \frac{\ell  \rnk\log\Verts}{\Verts^{0.8}} =o(1).
\]
To conclude the proof of \eqref{eq:upper-tail.cyc} it
remains to dominate the error \eqref{cycpf1} by
the first term on the \abbr{rhs} of \eqref{cyc.upper}, for which it suffices to show 
the analogue of \eqref{R3.upper}, namely
\begin{equation}	\label{cycpf2}
\upphi_{\Verts,p}(C_\ell, u-\eps_{\fluct}) \lls \Verts^2p^2\log(1/p)
\end{equation}
for any fixed $u\ge0$. 
While we could appeal to Theorem \ref{thm:BGLZ}, it is easy to verify \eqref{cycpf2} directly. That is, for the projection $\id_{[\Verts_0]}$ 
to the first $\Verts_0$ coordinates, consider
the matrix
\begin{equation}	\label{X.planted}
X_{\star} = p(\1\1^\tran-\id) + (1-p)(\1_{[\Verts_0]}\1_{[\Verts_0]}^\tran-\id_{[\Verts_0]}) \in \cX_\Verts \,.
\end{equation}
As $I_p(X_{  \star}) = {\Verts_0\choose 2} \log(1/p)$, taking $\Verts_0=\lf a\Verts p\rf$ for fixed $a=a(u)>0$ to be chosen gives 
\[
I_p(X_{\star}) \,\ls_{u} \, \Verts^2p^2\log(1/p)\,.
\]
Moreover, for any fixed $\ell \in \N$,
\[
\Tr X_{\star}^\ell \ge \Tr (\1_{[\Verts_0]}\1_{[\Verts_0]}^\tran-\id_{[\Verts_0]})^\ell = 
\frac{\Verts_0!}{(\Verts_0-\ell)!}
 = (a\Verts p-O(1))^\ell \,.
\]
With $p \ll 1$, we can take $a=(2(1+u))^{1/\ell}$, yielding that 
\[
\upphi_{\Verts,p}(\Cyc_\ell,u- \eps_{\fluct}) \le \upphi_{\Verts,p}(\Cyc_\ell, u) \le I_p(X) \,\ls_u\, \Verts^2p^2\log(1/p),
\]
as claimed in \eqref{cycpf2}. 
Turning to prove \eqref{eq:lower-tail.cyc}, let
\begin{equation}\label{KR-minus:def}
K=W p^{-1/2}\,,\qquad \rnk= \big(W^4/p\big)^{\ell/(\ell-2)}\, .
\end{equation}
By \eqref{prange.cyc.lower} we have that $p\gg \Verts^{2/\ell-1}$
(since $\frac{\ell-2}{2\ell-2} \le 1 - \frac{2}{\ell}$). Hence, from
\eqref{def:eps+} and \eqref{KR-minus:def}, 
\[
\eps(K,\rnk ) =  \frac1{\Verts^{1/2-1/\ell}p^{1/2}} + \frac{1}{W}=o(1) \,,
\]
yielding that $\eps_{\fluct}=o(1)$. Further, from  \eqref{def:excep.pm} 
we now have that
\[
\pexcep= C \exp(-cK^2\Verts^2p^2)=\exp(-\omega(\Verts^2p)) \,.
\]
Next, assuming
\[
p \ge \left( \frac{\log\Verts}{\Verts}\right)^{\frac{\ell-2}{2\ell-2}}\, 
W^{3\ell/(\ell-1)} \,,
\]
it follows that
\[
\frac{\rnk\Verts\log\Verts}{\Verts^2p} = \frac{(\log\Verts)\, W^{4\ell/(\ell-2)}}{\Verts p^{(2\ell-2)/(\ell-2)}} 
\le W^{\frac{4\ell}{\ell-2}- \frac{6\ell}{\ell-2}}\le W^{-2} = o(1)
\]
and so it only remains to show that
\begin{equation}	\label{cycpf3}
\uppsi_{\Verts,p}(C_\ell, u - \eps_{\fluct}) \,\ls_{u}\, \Verts^2 p \,.
\end{equation}
For this consider the matrix $X_o = b p \jay_\Verts \in \cX_\Verts$
for some fixed $b=b(u)\in [0,1]$. Clearly
$I_p(X_o) = {\Verts\choose2} I_p(bp)$, whereas since $p=o(1)$,
\[
I_p(bp) \sim p(b\log b - b + 1) \,.
\]
Thus $I_p(X_{o}) \lls_{u} \Verts^2p$. Moreover,
\[
\Tr X_{o}^\ell =  (bp)^\ell \frac{\Verts!}{(\Verts-\ell)!}\,,
\]
which for $b=(1-u)^{1/\ell}$ yields that
$
\uppsi_{\Verts,p}(\Cyc_\ell, u - \eps_{\fluct}) \le \uppsi_{\Verts,p}(\Cyc_\ell, u) \le 
I_p(X_o) 
\lls_{u} \Verts^2p
$
as claimed in \eqref{cycpf3}. 
\end{proof}

\subsection{Constructing a net}

For $\rnk\in \N$ let 
\begin{equation}
\Lambda_\rnk = \big\{ \, \bs{\lambda}=(\lambda_1,\dots, \lambda_\rnk)\in \R^\rnk : |\lambda_1|\ge |\lambda_2| \ge \cdots\ge |\lambda_\rnk|\,\big\}
\end{equation}
and for $L<\infty$ write
\begin{equation}
\Lambda_\rnk(L) = \{ \bs{\lambda}\in \Lambda_\rnk: \|\bs{\lambda}\|_2 \le L \}.
\end{equation}
For $1\le \rnk\le \Verts$ we denote the Stiefel manifold $\St(\Verts,\rnk )$ of ordered orthonormal bases for sub-spaces of $\R^\Verts$ of dimension $\rnk$ by
\[
\St(\Verts,\rnk ) = \big\{\, \bs{u} = (u_1,\dots, u_\rnk) \text{ orthonormal in } \R^\Verts\,\big\}.
\]
We denote a mapping
\begin{equation}	\label{def:matmap}
\Mat: \Lambda_\rnk\times \St(\Verts,\rnk )\to \Sym_{\Verts,\rnk }(\R), \qquad \Mat(\bs{\lambda},\bs{u}) = \sum_{j\le\rnk} \lambda_j u_ju_j^\tran,
\end{equation}
which is a surjection by the spectral theorem (recall from Section \ref{sec:notation} that $\Sym_{\Verts,\rnk }(\R)$ is the set of symmetric $\Verts\times\Verts$ matrices of rank at most $\rnk$).
We equip $\Lambda_\rnk$ and $\St(\Verts,\rnk )$ with the Euclidean metrics (where elements are naturally associated to points in $\R^\rnk$ and $\R^{\Verts\rnk}$, respectively). 
\begin{lemma}
\label{lem:net}
For any $1\le \rnk\le \Verts$ and $\delta\in (0,1]$, there
exist $\delta$-nets $\Sigma\subset \Lambda_\rnk(\Verts)$, $\cV\subset \St(\Verts,\rnk )$ (with respect to the Euclidean metrics) of size
\begin{equation}\label{bd:Vnet}
|\Sigma| \le \exp( \rnk\log(2\Verts^2/\delta)),
\qquad |\cV| \le  \exp(\rnk\Verts\log(3\sqrt{\rnk}/\delta)). 
\end{equation}
Furthermore, if $X=\Mat(\bla,\bu)\in \Sym_{\Verts,\rnk }(\R)\cap \ball_\HS(\Verts)$ and $Y=\Mat(\bmu,\bv)\in \Sigma\times \cV$ is such that $\|\bla-\bmu\|_2,\|\bu-\bv\|_\HS\le \delta$, then
\begin{equation}	\label{HS.bd}
\|X-Y\|_\HS\le 3 \Verts\delta \,.
\end{equation}
\end{lemma}

\begin{remark}\label{rm:delta-HS-net}
From \eqref{HS.bd} we have
that $\Mat(\Sigma\times \cV)$ is a $3 \Verts\delta$-net for $\Sym_{\Verts,\rnk }(\R)\cap \ball_\HS(\Verts)$ in the Hilbert--Schmidt metric.
In the proof of Theorem \ref{thm:cycles.quant} it will be convenient to separately approximate the spectrum and the eigenbasis of rank $\rnk$ projections of matrices $A \in \cA_\Verts$, which is why we have defined the net in terms of $\Sigma$ and $\cV$.
\end{remark}

\begin{proof}
For $\Sigma$ of the specified size we intersect $\Lambda_\rnk(\Verts)$ with 
the $\rnk$-th Cartesian power of an $\delta/(2\Verts)$-mesh of the interval $[-\Verts,\Verts]$.
As for $\cV$, since a $\delta$-separated subset
of a metric space 
which is maximal under set inclusion must be a $\delta$-net,
a standard volume argument yields the existence 
of a $\delta$-net
of size at most $(1+2r/\delta)^d$ for the ball $\ball_2(r) \subset \R^d$.
Recalling that
$\St(\Verts,\rnk )$ is 
a subset of the ball $\ball_2(\sqrt{\rnk})$
 in $\R^{\Verts\times \rnk}$,
yields for $\rnk \ge 1 \ge \delta$
a $\delta$-net for $\St(\Verts,\rnk)$, whose size is bounded as in \eqref{bd:Vnet}.

Turning to show \eqref{HS.bd}, by
the triangle inequality and Cauchy--Schwarz,
\begin{align*}
\|X-Y\|_\HS &\le  \sum_{j\le\rnk} \big\| \lambda_j u_ju_j^\tran -
\mu_j  v_jv_j^\tran \Big\|_\HS \\
& \le \sum_{j\le\rnk}  |\lambda_j-\mu_j |\, \|u_ju_j^\tran \|_\HS 
+ \sum_{j \le\rnk} |\mu_j | \, \| u_ju_j^\tran - v_jv_j^\tran \|_\HS \\
&\le \sqrt{\rnk} \|\bs{\lambda}-\bs{\mu}\|_2 + \|\bs{\mu}\|_2 \Big( \sum_{j\le\rnk} \big\| u_ju_j^\tran - v_jv_j^\tran \big\|_\HS^2 \Big)^{1/2}\\
&\le \sqrt{\rnk} \delta + \Verts \Big( \sum_{j\le\rnk}\big\| u_ju_j^\tran - v_jv_j^\tran \big\|_\HS^2 \Big)^{1/2}\,.
\end{align*}
Next note that for any $u,w\in \R^\Verts$,
\begin{equation}\label{basic-iden}
2\|u-w\|_2^2 - \|uu^\tran - w w^\tran \|_\HS^2 
= 2 (\langle u,w\rangle - 1)^2 - (\|u\|_2^2-1)^2 - (\|w\|_2^2-1)^2 \,,
\end{equation}
which is non-negative for pairs of unit vectors such as $u_j,v_j$. Summing over
$1\le j\le\rnk$ gives
\[
\sum_{j\le\rnk}\big\| u_ju_j^\tran - v_jv_j^\tran \big\|_\HS^2  \le 2\sum_{j\le\rnk} \|u_j-v_j\|_2^2 = 2\|\bs{u}-\bs{v}\|_\HS^2 \le 2 \delta^2\,.
\]
Consequently, 
$\|X-Y\|_\HS \le (1+\sqrt{2}) \Verts\delta \le 3 \Verts\delta$, as claimed.
\end{proof}

\subsection{Proof of Theorem \ref{thm:cycles.quant} }

Fix $\ell > 2$ and $\rnk\in [\Verts]$. For $X\in \Sym_\Verts(\R)$ recall the decomposition 
$X=X_{\le \rnk}+X_{>\rnk}$ of \eqref{def:cutoff}, omitting hereafter 
the subscript $\rnk$, with the induced parameters
\begin{equation}
\bu_\le(X) = (u_1,\dots, u_\rnk)\in \St(\Verts,\rnk), \qquad \bs{\lambda}_\le(X) = (\lambda_1,\dots, \lambda_\rnk)\in \Lambda_\rnk.
\end{equation}
In order to apply Corollary \ref{cor:covering} for
\[
h_\ell(X) = (\Verts p)^{-\ell} \Tr X^\ell \,,
\]
we specify for $\eps = \eps(K,\rnk)$ of \eqref{def:eps+},
the ``exceptional" set 
\begin{equation}
\cE(\eps) := \big\{ X\in \ball_\HS(\Verts) : \|X_>\|_{S_\ell} > \eps \Verts p \big\}\,.
\end{equation}
Then, for the covering by closed convex sets,
let $\Sigma$ and $\cV$ be as in Lemma \ref{lem:net} and
for each $X \in \ball_\HS(\Verts)$ choose any
$\by(X)=(\bs{\mu}(X), \bv(X))\in \Sigma\times \cV$  such that 
\begin{equation}\label{eq:net-dist}
\|\bs{\lambda}_\le(X) - \bmu(X)\|_2 \le \delta \qquad
\mbox{and}
\qquad \|\bu_\le(X)-\bv(X)\|_\HS \le \delta \,.
\end{equation}
Setting $\delta' = 5 \delta \sqrt{\Verts}$,
for each $\by=(\bmu,\bu)\in \Sigma\times \cV$ consider the neighborhoods of $\Mat(\by)$,
\begin{align}
\cB_\by(\eps) :=\big\{
\Mat(\by)+W+Z  : \; W \in \ball_\HS(\delta' n),\;  Z \in \cZ_{\by}(\eps) \, \big\} \cap 
 \ball_\HS(n) \,, 
\label{def:By}
\end{align}
where for each $\eps>0$ the set
\[
\cZ_{\by}(\eps) := \big\{ Z \in \Sym_\Verts(\R) : \Im(Z)\subseteq \ker(\Mat(\by)),\;
 \|Z\|_{S_\ell}\le \eps \Verts p  \,\big\},
\]
consists of symmetric matrices controlled in $S_\ell$-norm, whose image as linear operators 
is orthogonal to that of $\Mat(\by)$.  The set $\cZ_{\by}(\eps)$ is convex, by 
convexity of the $S_\ell$-norm and the linear subspace $\ker(\Mat(\by)) \subset \R^n$,
hence $\cB_\by(\eps)$, being the intersection of the convex set $\ball_\HS(\Verts)$ with the 
translation by $\Mat(\by)$ of the convex sum-set $\ball_\HS(\delta'\Verts) + \cZ_{\by}(\eps)$, 
is also convex. In the following claims we show that the sets $\cB_\by(\eps)$,
with $\by$ ranging over the net $\Sigma\times \cV$, cover all of $\ball_\HS(\Verts)\supseteq\cX_\Verts$ but the exceptional set $\cE(\eps)$, and that the function $h_\ell$ is essentially constant on each $\cB_\by(\eps)$
 when $\eps$ is small. 
The intuition is that $h_\ell(X)$ is most sensitive to perturbations of $X$ in the directions $\bu_\le(X)$, so we approximate $X_\le$ with high precision in the Hilbert--Schmidt norm, while the remainder $X_>$ (approximated by $Z$) only needs to be of size $o(\Verts p)$ in $S_\ell$-norm (one should think of $\eps$ as being arbitrarily small but fixed while $\delta'=\delta'(\Verts)$ goes to zero polynomially fast).
\begin{claim}
\label{claim1}
For any $\ell > 2$, $\eps>0$, $\delta>0$ and 
$X \in \ball_\HS(\Verts) \cap \cE(\eps)^c$ we have 
$X\in \cB_{\by(X)}(\eps)$.
\end{claim}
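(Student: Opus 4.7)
Set $Y := \Mat(\by(X))$ with $\by(X) = (\bmu, \bv)$, and let $V := \mathrm{span}(v_1,\ldots,v_R)$. Since $\Im Y \subseteq V$, the orthogonal projection $Q$ onto $V^\perp$ satisfies $\Im Q \subseteq \ker Y$. The natural candidate is
\[
Z := Q X_> Q,
\]
namely the ``residual'' $X_>$ of $X$ after cutting off its top-$R$ spectral part, projected into $\ker Y$. This $Z$ is symmetric with $\Im Z \subseteq V^\perp \subseteq \ker Y$, and by two applications of the non-commutative H\"older inequality \eqref{Schatten.op} (with $\|Q\|_\op = 1$) together with the hypothesis $X \notin \cE(\eps)$,
\[
\|Z\|_{S_\ell} \le \|X_>\|_{S_\ell} \le \eps Np.
\]

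The remaining condition, $\|X - Y - Z\|_\HS \le N^{-50\ell}$, is the only real work. Splitting $X = X_\le + X_>$ and applying the triangle inequality gives
\[
\|X - Y - Z\|_\HS \le \|X_\le - Y\|_\HS + \|X_> - Q X_> Q\|_\HS,
\]
where by construction of $\by(X)$ and Lemma \ref{lem:net}, the first summand is bounded by $N^{-100\ell}$. For the second, expanding $X_> = (Q + P_V) X_> (Q + P_V)$ with $P_V := I - Q$ and using the symmetry of $X_>$, one sees
\[
\|X_> - Q X_> Q\|_\HS \le 3\|P_V X_>\|_\HS.
\]

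The crux is that $P_V X_>$ is negligible. Letting $U := \mathrm{span}(u_1,\ldots,u_R)$ be spanned by the top-$R$ eigenvectors of $X$, one has $P_U X_> = 0$ because $\Im X_> \subseteq U^\perp$; hence $P_V X_> = (P_V - P_U) X_>$. The elementary projection-perturbation bound $\|P_V - P_U\|_\op \le 2\sum_{j \le R} \|u_j - v_j\|_2 \le 2\sqrt{R}\,\|\bu_\le(X) - \bv\|_\HS \le 2 N^{1/2 - 200\ell}$ (from \eqref{eq:net-dist} and Cauchy--Schwarz), combined with $\|X_>\|_\HS \le \|X\|_\HS \le N$, then gives $\|P_V X_>\|_\HS \le 2 N^{3/2 - 200\ell}$, which is comfortably smaller than $N^{-50\ell}$. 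The only mildly delicate point is this last projection-perturbation step, which sidesteps the need for an eigenvalue gap by comparing projections onto two explicit ordered $R$-frames that are \emph{a priori} within $N^{-200\ell}$ in the Stiefel metric; everything else is a direct application of the triangle inequality and \eqref{Schatten.op}.
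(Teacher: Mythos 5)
Your proof is correct and takes essentially the same approach as the paper: the same choice of $Z$ (the compression of $X_>$ into $\ker Y$ via the projection $Q=\id - VV^\tran$), the same application of \eqref{Schatten.op} to bound $\|Z\|_{S_\ell}$, the same triangle-inequality split $\|X-Y-Z\|_\HS \le \|X_\le - Y\|_\HS + \|X_> - Z\|_\HS$, and the same key use of the orthogonality $\Im(X_>)\perp\Span(u_1,\dots,u_R)$ to reduce the second term to a multiple of $\|\bu_\le(X)-\bv\|_\HS$. The only variation is bookkeeping in that last estimate: you bound $\|X_>-QX_>Q\|_\HS\le 3\|(I-Q)X_>\|_\HS$ at the operator level and invoke a projection-perturbation bound on $\|VV^\tran - UU^\tran\|_\op$, whereas the paper expands $X_>-Z=\sum_{j>R}\lambda_j(u_ju_j^\tran-(Q u_j)(Q u_j)^\tran)$ and applies Cauchy--Schwarz termwise; both routes land comfortably below $N^{-50\ell}$.
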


\begin{claim}
\label{claim2}
For any $\ell > 2$, $\eps>0$,
$\delta' \le \Verts^{-2\ell}$,
$\by\in \Sigma\times\cV$ and $X\in \cB_\by(\eps)$, 
\[
|h_\ell(X)-h_\ell(\Mat(\by))| \le \eps^\ell + O(\Verts^{-\ell}) \, .
\]
\end{claim}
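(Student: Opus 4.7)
\textbf{Proof plan for Claim \ref{claim2}.}
Set $Y := \Mat(\by)$ and, for $X\in \cB_\by(\eps)$, fix the witness $Z$ from \eqref{def:By}, so that $E := X-Y-Z$ satisfies $\|E\|_{\HS}\le N^{-50\ell}$, while $\|Z\|_{S_\ell}\le \eps Np$ and $\Im(Z)\subseteq \ker(Y)$. The strategy is to exploit the orthogonality of the ranges of $Y$ and $Z$ that is built into $\cB_\by$ to split the trace exactly, so that $Z$ contributes the $\eps^\ell$ main term and the negligible $E$ contributes the $O(N^{-20\ell})$ error.

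The key algebraic point is that, since $Y$ is symmetric, $\ker(Y)=\Im(Y)^\perp$, and so $\Im(Z)\subseteq \ker(Y)$ gives $YZ=0$; taking transposes (using $Y=Y^\tran$, $Z=Z^\tran$) yields $ZY=0$ as well. A short induction on $k\ge 1$ then shows $(Y+Z)^k = Y^k + Z^k$ since every mixed monomial contains a factor $YZ$ or $ZY$. Taking $k=\ell$ and traces gives the exact identity
\[
\Tr (Y+Z)^\ell \;=\; \Tr Y^\ell \;+\; \Tr Z^\ell \,,
\]
and bounding $|\Tr Z^\ell|\le \sum_{j}|\lambda_j(Z)|^\ell = \|Z\|_{S_\ell}^\ell \le (\eps Np)^\ell$ and dividing by $N^\ell p^\ell$ yields $|h_\ell(Y+Z)-h_\ell(Y)|\le \eps^\ell$.

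It remains to absorb the $E$-perturbation. Applying \eqref{claim2.1} to $M_1=X=Y+Z+E$ and $M_2=Y+Z$,
\[
|\Tr X^\ell - \Tr(Y+Z)^\ell| \;\le\; \ell\,\|E\|_{\op}\bigl(\|X\|_{S_{\ell-1}}^{\ell-1}+\|Y+Z\|_{S_{\ell-1}}^{\ell-1}\bigr).
\]
Since $\ell-1\ge 2$, the Schatten norms are dominated by the Hilbert--Schmidt norm, and the bound $X\in \ball_\HS(N)$ together with $\|E\|_{\HS}\le N^{-50\ell}$ gives $\|X\|_{S_{\ell-1}},\|Y+Z\|_{S_{\ell-1}}\lls N$. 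Combined with $\|E\|_{\op}\le \|E\|_{\HS}\le N^{-50\ell}$, one obtains
\[
|\Tr X^\ell - \Tr(Y+Z)^\ell|\;\lls\;\ell\,N^{-50\ell}\cdot N^{\ell-1}\;\lls\; N^{-49\ell}.
\]
Under the standing hypothesis $p\ge N^{-1/2}$ of Theorem \ref{thm:cycles.quant}, we have $N^{-\ell}p^{-\ell}\le N^{-\ell/2}$, so this contributes $O(N^{-49\ell}\cdot N^{-\ell/2})=O(N^{-20\ell})$ after normalization, and the triangle inequality concludes the proof.

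\textbf{Main obstacle.} There is no deep difficulty once the orthogonal decomposition is in place: the set $\cB_\by$ has been specifically engineered so that the identity $(Y+Z)^\ell = Y^\ell + Z^\ell$ is exact, eliminating any genuine cross-term estimate. The only delicate point is the bookkeeping for $E$, which relies on \eqref{claim2.1} and on using the (weak) lower bound on $p$ to convert a polynomially small Hilbert--Schmidt error on $X$ into a polynomially small error on $h_\ell$. This is also the step that would need the most modification in extending to non-integer $\alpha>2$ (the setting of Proposition \ref{prop:Schatten}), since the clean binomial identity $(Y+Z)^\alpha = Y^\alpha + Z^\alpha$ then has to be justified through functional calculus on the two orthogonal spectral subspaces.
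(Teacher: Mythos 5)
Your proof is correct and follows essentially the same route as the paper: decompose $X = Y + Z + E$, use \eqref{claim2.1} to absorb the tiny Hilbert--Schmidt error $E$, and exploit $\Im(Z)\subseteq\ker(Y)$ to split $\Tr(Y+Z)^\ell = \Tr Y^\ell + \Tr Z^\ell$. The only cosmetic difference is that you establish this last identity at the matrix level via $YZ=ZY=0$ and $(Y+Z)^\ell=Y^\ell+Z^\ell$, whereas the paper argues it spectrally (the nonzero eigenvalues of $Y+Z$ are the union of those of $Y$ and of $Z$ restricted to $\ker(Y)$); you also make explicit the use of $p\ge N^{-1/2}$ in the final normalization step, which the paper leaves implicit.
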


We defer the proofs of these claims to subsequent subsections and conclude the proof of Theorem \ref{thm:cycles.quant}.
From Claim \ref{claim1} we have that for any $\eps>0$, 
\[
\ball_\HS(\Verts) \setminus  \cE(\eps)\subseteq  \bigcup_{\by\in \Sigma\times \cV} \cB_\by(\eps).
\]
From Claim \ref{claim2} and the triangle inequality we have for 
any $\by\in \Sigma\times \cV$, $\eps>0$ and $X,X'\in \cB_\by(\eps)$,
\begin{equation}	\label{hell.fluct}
|h_\ell(X)-h_\ell(X')|\le 2\eps^\ell + O(\Verts^{-\ell}).
\end{equation}
It is easy to check that for $L=K \Verts p$, $D=C' \sqrt{\Verts p}$ and 
$\eps=\eps(K,\rnk)$, 
\[
\Verts^{1/\ell} D  + \kappa_\ell \, L \, \rnk^{1/\ell-1/2} = \eps \Verts p \,.
\]
Hence, by \eqref{tail.refined} we have that 
$\|\Anp_{>}\|_{S_\ell} \le \eps \Verts p$ on the event that
$\Anp \in \cG(L,D)$. 
From \Cref{prop:tail.refined},
the latter holds up to $\mu_p$-probability  
$\pexcep (K)$ of \eqref{def:excep.pm}. In particular,
\begin{equation*}
\mu_p(\cE(\eps)) \le \pexcep (K) \,.
\end{equation*}
Further, for such $\eps$
the \abbr{rhs} of \eqref{hell.fluct} is
controlled by $\eps_{\fluct} = 3 \eps^\ell$. Thus, \eqref{cyc.upper} and \eqref{cyc.lower}
follow by applying Corollary \ref{cor:covering} 
for $h_\ell$, with $t=1\pm u$, $\cE=\cA_\Verts \cap \cE(\eps)$ 
and $\{\cB_i\}_{i\in \cI}=\{\cB_\by(\eps)\}_{\by\in \Sigma\times \cV}$.
\qed

\subsection{Proof of Claim \ref{claim1}}
Fix $X\in \ball_\HS(\Verts) \cap \cE(\eps)^c$ with spectral decomposition 
\[
X = \sum_{j=1}^\Verts \lambda_j u_ju_j^\tran 
\]
and write $Y:= \Mat(\bmu,\bv)$ for $(\bmu,\bv)=\by(X)$
 (with notation as in \eqref{def:matmap}). Consider the matrix
$V$ with columns $v_1,\dots, v_\rnk$ and the corresponding projection matrix
$\Pi=\id-VV^\tran$ onto $\Span(v_1,\dots, v_\rnk)^\perp$. Evidently,
$\Im(Z)\subseteq \Span(v_1,\dots, v_\rnk)^\perp \subseteq\ker(Y)$
for $Z := \Pi X_> \Pi$.
Proceeding to establish \eqref{def:By} for $X$, $Y$ and $Z$, upon
applying \eqref{Schatten.op}, 
our assumption that $X \notin \cE(\eps)$ yields that
\[
\|Z\|_{S_\ell} \le \|\Pi\|_\op^2 \|X_>\|_{S_\ell}\le \|X_>\|_{S_\ell} \le \eps \Verts p \,.
\]
Further, setting $w_j=\Pi u_j$, we have by the triangle inequality and Cauchy--Schwarz, that
\begin{align}
\|X_>-Z\|_\HS 
 = \big\| \sum_{j>\rnk} \lambda_j (u_ju_j^\tran - w_j w_j^\tran ) 
\big\|_\HS  
& \le \sum_{j>\rnk} \, |\lambda_j| \, \|u_ju_j^\tran - w_j w_j^\tran\|_\HS \nonumber \\
&\le 
\|X_>\|_{\HS} \,
\big( \sum_{j>\rnk} \|u_ju_j^\tran - 
w_j w_j^\tran\|_\HS^2\big)^{1/2}.
\label{eq:bd-am1}
\end{align}
Recall \eqref{basic-iden}, that 
$\|uu^\tran -  w w^\tran\|_\HS^2 \le 2 \|u -w \|_2^2$
whenever $\|w\|_2^2 = \langle u, w \rangle$ and $\|u\|_2=1$. 
With $\Pi$ a projection matrix, this applies for $w_j = \Pi u_j$
and since $\|V\|_{\op} = 1$, yields the bound 
\begin{align}\label{eq:bd-am2}
\|u_j u_j^\tran -  w_j w_j^\tran\|_\HS^2 \le
 2\| (\id - \Pi)u_j\|_2^2 = 2 \| V V^\tran u_j \|_2^2
 \,\le \, 2\|V^\tran u_j \|_2^2 \,.
\end{align}
Further, denoting by
$U$ the matrix of columns $u_1,\dots, u_\rnk$, as
$\{u_j\}$ are orthonormal, $U^\tran u_j =0$ for any $j>\rnk$ and
from \eqref{eq:net-dist} we deduce that 
\begin{equation}\label{eq:bd-am3}
\|V^\tran u_j\|_2 = 
\|(V-U)^\tran u_j \|_2 \le \|V-U\|_\HS \le \delta \,.
\end{equation}
Combining \eqref{eq:bd-am1}--\eqref{eq:bd-am3}, and recalling that
$\|X_>\|_\HS\le \|X\|_\HS\le \Verts$ for $X\in \ball_\HS(\Verts)$, yields
\begin{equation}\label{W.above}
\|X_>-Z\|_\HS \le \|X_>\|_\HS  \sqrt{2\Verts} \delta \le 2 \Verts^{3/2} \delta \,.
\end{equation}
Finally, by the triangle inequality and \eqref{HS.bd} we have that
\[
\|X-Y-Z\|_{\HS} \le \|X_\le - Y\|_\HS + \|X_>-Z\|_\HS \le  
3 \Verts\delta +\|X_>-Z\|_\HS \,.
\]
In view of \eqref{W.above}, we see that 
$\|X-Y-Z\|_\HS \le 5 \Verts^{3/2} \delta$ as desired for \eqref{def:By}.
\qed

\subsection{Proof of Claim \ref{claim2}}
$~$
For $Y=\Mat(\by)$ and 
$X \in \cB_\by(\eps)$, let $Z$ be as in \eqref{def:By}.
Considering \eqref{claim2.1} for matrices $X$, $Y+Z$, we get
by the monotonicity of $\ell \mapsto \|\cdot\|_{S_\ell}$ that 
\begin{align}\label{eq:err-eps}
|\Tr X^\ell - \Tr (Y+Z)^\ell| 
 & \le \ell \|X-Y-Z\|_\op ( \|X\|_{S_{\ell-1}}^{\ell-1} 
 + \|Y+Z\|_{S_{\ell-1}}^{\ell-1}) \notag \\
&  \le \ell \, \|X-Y-Z\|_\HS \, ( \|X\|_{\HS}^{\ell-1} + \|Y+Z\|_{\HS}^{\ell-1}) \,.
\end{align}
Further, $\|X-Y-Z\|_\HS\le \delta'\Verts$ by \eqref{def:By}, and
$\|X\|_\HS \le \Verts$. Thus, with $\delta' \le 1$,
\[
\|Y+Z\|_{\HS} \le \|X\|_{\HS} +  \|X-Y-Z\|_{\HS} \le \Verts+ \delta'\Verts
\le  2\Verts\,.
\]
Along with \eqref{eq:err-eps}, the preceding yields
\begin{equation}\label{TrXY+Z}
|\Tr X^\ell - \Tr (Y+Z)^\ell| \le \ell \delta'\Verts
(\Verts^{\ell-1} + (2\Verts)^{\ell-1}) \le \delta' (4\Verts)^\ell \,.
\end{equation}
Since $\Im(Z) \subseteq \ker(Y)$, with $Y,Z \in \Sym_\Verts(\R)$, 
we have that $YZ=ZY=0$, and hence
\begin{equation}	\label{TrYZ}
\Tr(Y+Z)^\ell = \Tr Y^\ell + \Tr Z^\ell.
\end{equation}
Since $\delta' \le \Verts^{-2 \ell}$, we see that
\[
|\Tr X^\ell - \Tr Y^\ell| \le |\Tr Z^\ell| + O(\Verts^{-\ell})   
\le \|Z\|_{S_\ell}^\ell +  O (\Verts^{-\ell}) \le (\eps \Verts p)^\ell + O(\Verts^{-\ell}) \,,  
\]
and the claim follows from dividing through by $(\Verts p)^\ell \ge 1$.
This concludes the proof of Claim \ref{claim2} and hence of Theorem \ref{thm:cycles.quant}.

\subsection{Proof of Proposition \ref{prop:Schatten}}
\label{sec:Schatten}
Fix $\alpha \in (2,\infty]$ and for $X\in\Sym_\Verts(\R)$ denote
\[
g_\alpha (X) = (\Verts p)^{-1} \|X\|_{S_\alpha} \,.
\]
Setting $\ell=\alpha \in (2,\infty]$ possibly non-integer, $t=q/p$, 
$\eps_{\fluct}=3 \eps$, while
replacing $h_\ell(\cdot)$ by $g_\alpha(\cdot)$,
only three items of the proof of Theorem \ref{thm:cycles} require modification.
First, since $\|X\|_{S_\alpha} \ge \|X\|_\op \ge 
\Verts_0^{-1} \1_{[\Verts_0]}^T X \1_{[\Verts_0]}$
for any $X \in \Sym_\Verts(\R)$, verifying that
$\1_{[\Verts_0]}^T X_\star \1_{[\Verts_0]} = \Verts_0 (\Verts_0-1)$ for $X_\star$ of
\eqref{X.planted}, yields 
the analog of \eqref{cycpf2}. Similarly, having 
$\|\jay_\Verts \|_{S_\alpha} 
= \Verts(1+o(1)),$ yields the analog of \eqref{cycpf3}.
Lastly, replacing Claim \ref{claim2} with the following
substitute eliminates the factor $\ell$ of \eqref{bd:large-ell}, 
thereby handling also $\alpha=\infty$.
\begin{claim}\label{claim2b}
For any $\alpha \in (2,\infty]$, $\delta' \le \Verts^{-2}$,
$\eps>0$, $\by\in \Sigma\times \cV$ and $X\in \cB_\by(\eps)$, 
\begin{equation}\label{Sell-apx}
\big| g_\alpha(X) - g_\alpha(\Mat(\by))  \big| \le \eps + \Verts^{-1}\,.
\end{equation}
\end{claim}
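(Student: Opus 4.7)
The plan is to follow the structure of the proof of Claim \ref{claim2}, replacing its two integer-specific ingredients by appropriate Schatten-norm analogues. Fix $X \in \cB_\by(\eps)$, set $Y = \Mat(\by)$, and take $Z$ as guaranteed by the (suitably modified) definition \eqref{def:By}, so that $\Im(Z) \subseteq \ker(Y)$, $\|Z\|_{S_\alpha} \le \eps N p$, and $\|X - Y - Z\|_\HS \le N^{-50\alpha}$. By the triangle inequality,
\[
\bigl| \|X\|_{S_\alpha}^\alpha - \|Y\|_{S_\alpha}^\alpha \bigr| \;\le\; \bigl| \|X\|_{S_\alpha}^\alpha - \|Y+Z\|_{S_\alpha}^\alpha \bigr| \;+\; \bigl| \|Y+Z\|_{S_\alpha}^\alpha - \|Y\|_{S_\alpha}^\alpha \bigr|,
\]
and I will control the two terms separately.

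For the second term, the inclusion $\Im(Z) \subseteq \ker(Y)$ together with symmetry of $Z$ forces $Z$ to vanish on $\Im(Y)$, so $Y$ and $Z$ act on mutually orthogonal subspaces of $\R^N$. Hence the multiset of eigenvalues of $Y+Z$ is the union of those of $Y$ and of $Z$, giving the Schatten identity $\|Y+Z\|_{S_\alpha}^\alpha = \|Y\|_{S_\alpha}^\alpha + \|Z\|_{S_\alpha}^\alpha$ valid for any $\alpha\ge 0$, not merely even integers. This is the replacement for the trace identity \eqref{TrYZ}, and after dividing by $N^\alpha p^\alpha$ it contributes at most $\eps^\alpha$.

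For the first term I apply the elementary scalar bound $\bigl||a|^\alpha - |b|^\alpha\bigr|\le \alpha |a-b|\bigl(|a|^{\alpha-1}+|b|^{\alpha-1}\bigr)$, valid for $\alpha\ge 1$, eigenvalue-by-eigenvalue together with Weyl's inequality \eqref{lambda.diff}. This yields the direct Schatten analogue of \eqref{claim2.1},
\[
\bigl| \|X\|_{S_\alpha}^\alpha - \|Y+Z\|_{S_\alpha}^\alpha \bigr| \;\le\; \alpha \|X-Y-Z\|_\op \bigl( \|X\|_{S_{\alpha-1}}^{\alpha-1} + \|Y+Z\|_{S_{\alpha-1}}^{\alpha-1} \bigr),
\]
and here $\|X-Y-Z\|_\op \le \|X-Y-Z\|_\HS \le N^{-50\alpha}$ while $\|X\|_\HS\le N$ and $\|Y+Z\|_\HS\le 2N$, exactly as in the original argument. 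It remains only to bound the two $S_{\alpha-1}$ norms polynomially in $N$.

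The one real wrinkle beyond Claim \ref{claim2} is the $S_{\alpha-1}$ bound. For $\alpha\ge 3$ the monotonicity $\|M\|_{S_{\alpha-1}}\le \|M\|_{S_2}=\|M\|_\HS$ gives $\|X\|_{S_{\alpha-1}}^{\alpha-1}\le N^{\alpha-1}$ and the proof goes through as before. For $2<\alpha<3$ the exponent $\alpha-1$ drops below $2$ and this inequality reverses, so I will instead invoke the standard Schatten interpolation $\|M\|_{S_p}\le N^{1/p-1/q}\|M\|_{S_q}$ for $p\le q$ (a consequence of H\"older applied to the eigenvalue sequence) with $q=2$, which still yields $\|X\|_{S_{\alpha-1}}^{\alpha-1}\le N^{1+(\alpha-1)/2}$. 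Either way the first piece is bounded by $\alpha\cdot N^{-50\alpha+C(\alpha)}p^{-\alpha}$ with $C(\alpha)$ depending only on $\alpha$, which sits safely inside the claimed $O(N^{-20\alpha})$ slack over the range of $p$ stipulated in Proposition \ref{prop:Schatten}; if necessary the $N^{-200\ell}$-accuracy in Lemma \ref{lem:net} can be sharpened by an $\alpha$-dependent factor to absorb $C(\alpha)$.
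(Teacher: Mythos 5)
Your argument is correct and follows the same route as the paper: derive the Schatten analogue of \eqref{claim2.1} via the scalar inequality and Weyl's bound \eqref{lambda.diff}, apply it to the pair $X$, $Y+Z$, and exploit the orthogonal direct-sum identity $\|Y+Z\|_{S_\alpha}^\alpha = \|Y\|_{S_\alpha}^\alpha + \|Z\|_{S_\alpha}^\alpha$ (valid for all $\alpha$, replacing \eqref{TrYZ}). Your observation about the $S_{\alpha-1}$ bound is a genuine improvement in care: the paper's instruction to proceed ``as in the derivation of \eqref{TrXY+Z}'' tacitly uses the monotonicity $\|\cdot\|_{S_{\alpha-1}}\le\|\cdot\|_{\HS}$, which requires $\alpha-1\ge 2$ and thus fails for $\alpha\in(2,3)$. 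Your interpolation fix is valid, but note that given the enormous slack between $N^{-50\alpha}$ and the target $O(N^{-20\alpha})$ (and the standing assumption $p\ge N^{-1/2}$), even the crude bound $\|M\|_{S_{\alpha-1}}^{\alpha-1}\le N\cdot\|M\|_\op^{\alpha-1}\le N\cdot(2N)^{\alpha-1}$ for $M\in\{X,Y+Z\}$ is more than enough, so there is no need to contemplate sharpening the accuracy of the nets in Lemma \ref{lem:net}.
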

\begin{proof} For $Y=M(\by)$ and $Z$ as in \eqref{def:By} and 
$\alpha \in (2,\infty]$, by the triangle inequality 
\[
\big| \| X \|_{S_\alpha} - \|Y+Z\|_{S_\alpha} \big| 
\le \| X-Y-Z \|_{S_\alpha} \le \|X-Y-Z\|_{\HS} \le \delta'\Verts \le \Verts^{-1}\,.
\]
By the same reasoning, 
\[
\big| \|Y+Z\|_{S_\alpha} - \|Y\|_{S_\alpha} \big| \le  \|Z\|_{S_\alpha} \le 
\eps \Verts p \,.
\]
Adding the preceding inequalities and dividing by $\Verts p \ge 1$, yields the bound \eqref{Sell-apx}.
\end{proof}

\begin{remark}
As with Theorem \ref{thm:cycles}, our argument yields a quantitative version of Proposition \ref{prop:Schatten}, which is the same as Theorem \ref{thm:cycles.quant} but with the integer $\ell \ge 3$ in \eqref{def:eps+}--\eqref{def:Ecomplex}
and $(\Verts p)^{-\ell} \hom(\Cyc_\ell, \Gnp)$, replaced by 
$\alpha \in (2,\infty]$ and 
$h(\Anp)=(\Verts p)^{-1} \|\Anp\|_{S_\alpha}$, respectively, 
where now $\eps_{\fluct} = 3 \eps$ and
$\upphi_{\Verts,p}(C_\ell, u),$ $\uppsi_{\Verts,p}(C_\ell, u)$, are correspondingly 
replaced with \eqref{def:phigen} and \eqref{def:psigen} 
for such $h$ and $t=1 \pm u$.
\end{remark}

\section{The upper tail for largest eigenvalues}
\label{sec:PF}

\Cref{thm:lam12} is a direct consequence of the following more general, quantitative bounds.
\begin{theorem}\label{thm:lam1.quant}
For $B\in \Sym_\Verts(\R)$ (non-random), let
\[
g_B:\cX_\Verts\to \R_+, \qquad  g_B(X)= \|X+B\|_\op \,.
\]
Then, for any such $B$ and all $\Verts\in \N$, $p \in (0,1)$,
$\delta \in (0,\frac13)$ and $t\ge0$, 
\begin{align}	\label{PF:bounds}
\phigen_p\big(g_B, \,(1-3\delta)t\big) - \Verts\log(9/\delta) 
&\,\le\, -\log\pr(g_B(\Anp)\ge t)  
\,\le\, \phigen_p\big( g_B,\, t+2 \big) + \log 2 \,.
\end{align}
\end{theorem}

\begin{remark} Slight modifications in the proof of \Cref{thm:lam1.quant}
yield the same bounds on the right-most eigenvalue, namely for 
$g_B^+(X) := \sup_{u\in \sphere^{\Verts-1} } \langle u, (X+B)u\rangle$. 
\end{remark}

\begin{proof}[Proof of \Cref{thm:lam12}] 
We start with \eqref{PF.tail}.
Fix $s=q/p > 1$. 
With $\Verts p \ge \kappa \log\Verts$, for  
all $\Verts$ large enough, 
$t=\frac{s-1}{2} \Verts p \ge C_\kappa \sqrt{\Verts p}$, so we deduce 
from \eqref{approx:op} at $k=1$ and the triangle inequality that
\begin{equation}\label{log-lamd1-ut}
-\log\pr(\|\Anp\|_\op \ge \Verts q) \,\gs_s\, (\Verts p)^2 \,.
\end{equation}
Combined with the \abbr{rhs} of \eqref{PF:bounds} for $B=0$,
this implies that for $\Verts^{-1}\log\Verts\lls p\le 1/2$,
\begin{equation}\label{lbd-phi-p}
\phigen_p(\|\cdot\|_\op, \Verts q) \,\gs_s\, (\Verts p)^2 \,.
\end{equation}
In particular, the upper bound in \eqref{PF.tail} on the \abbr{lhs}  
of \eqref{log-lamd1-ut} holds for any such $p$. In case
$p \gg \Verts^{-1/2}$, we have by \eqref{lbd-phi-p} that the leading term 
on the \abbr{lhs} of \eqref{PF:bounds} (at $B=0$), is at least 
$(\Verts p)^2 \gg\Verts$. We can then set $\delta(\Verts) \to 0$ sufficiently 
slowly for it to dominate the error term $\Verts \log (9/\delta)$, 
yielding the matching lower bound in \eqref{PF.tail}.

Turning to \eqref{lam2.tail}, taking
$p \ge \frac{\kappa}{\Verts} \log\Verts$ and $t \ge C_\kappa \sqrt{\Verts p}$ (so $t \gg p$), we get 
from \eqref{approx:op}, as before, that  
\begin{equation}\label{log-lamd2-ut}
-\log\pr( \|\Anp- p \1 \1^\tran \|_\op \ge t) \ggs t^2 \,.
\end{equation}
Setting hereafter $B=-p \1 \1^\tran$, combined with the \abbr{RHS} of \eqref{PF:bounds} this yields that 
\[
\phigen_p(\|\cdot \,-p \1 \1^\tran \|_\op, \,t) = 
\phigen_p(g_B,t) \ggs t^2 \,.
\]  
Thus, the upper bound in \eqref{lam2.tail} on the \abbr{lhs} of 
\eqref{log-lamd2-ut} holds for any such $t(\Verts)$ and $p(\Verts)$. Similarly to
our proof of \eqref{PF.tail}, when $t \gg \sqrt{\Verts}$ the leading term 
in the \abbr{LHS} of \eqref{PF:bounds} is much larger than $\Verts$, so taking
$\delta(\Verts) \to 0$ sufficiently slowly yields the 
matching lower bound.
\end{proof}

To establish Theorem \ref{thm:lam1.quant} will use the following 
standard converse to Proposition \ref{prop:cvx} for 
the case that $\cK$ is a closed half-space.
\begin{lemma}
\label{lem:HP-lb}
For $s \in \R$ and non-zero $v\in \R^d$, let 
$\cH_v(s)=\{x\in \R^d: \langle v,x\rangle\ge s\}$. Then,
\begin{equation}\label{eq:HP-lb}
\mu_p(\cH_v(s)) \ge  \frac{1}{2} \expo{ - I_p(\cH_v(s+ \sqrt{2} \|v\|_2)} .
\end{equation}
\end{lemma}
\begin{proof} Let $\bx\in \{0,1\}^d$ have distribution $\mu_p$ and
$\Lambda(\beta):= \log \e e^{\beta T}$, the \abbr{cgf} of 
$T:=\langle v , \bx \rangle$.
While proving \Cref{prop:cvx} we have seen 
that for any $\beta \ge 0$ and $y \in [0,1]^d$, 
\[
I_p(y) = \Lambda_p^\star(y) 
 \ge 
\beta \langle v,y \rangle - \Lambda(\beta) \,.
\]
Consequently, 
\begin{equation}\label{st-comp}
I_p(\cH_v(t)) = \inf_{\{y:\langle v,y \rangle \ge t\}} I_p(y) \ge 
\sup_{\beta \ge 0} \{\beta t - \Lambda(\beta) \} \,.
\end{equation}
Next, with $\e_\beta$ denoting expectation  
under the tilted product measure $\mu_{p,\beta}$ such that
\[
\frac{d \mu_{p,\beta}}{d \mu_p} = e^{\beta T -\Lambda(\beta)} \,,
\]
recall that $m_\beta := \e_\beta T = \Lambda'(\beta)$ is an 
increasing function, with 
$\Lambda'(\beta) \uparrow m_\infty
<\infty$ as $\beta \to \infty$.
In particular, setting $w= 2^{-1/2} \|v\|_2$, we deduce 
from \eqref{st-comp} that whenever $s + w \ge m_\infty$ we have  
$I_p(\cH_v(s+2w))=\infty$ 
and \eqref{eq:HP-lb} trivially holds. 
Further,
$\Var_\beta (T) = \Lambda''(\beta) \le \frac{1}{4} \|v\|_2^2$
for any $\beta$. Hence, for $J_\beta:=[m_\beta-w,m_\beta+w]$ we have
from Chebychev's inequality that
\begin{equation}\label{tpge}
\pr_\beta(T \notin J_\beta) \le w^{-2} \Var_\beta(T)
\le \frac{1}{2} \,.
\end{equation}
This yields \eqref{eq:HP-lb} when $s + w \le m_0$, since
\[
\mu_p(\cH_v(s)) = 1 - \pr_0(T < s) \ge 1 - \pr_0(T \notin J_0) \ge \frac{1}{2} \,.   
\]
If $s+w \in (m_0,m_\infty)$, then $s+w = m_\beta$ for some $\beta>0$ 
with $J_\beta \subseteq [s,s+2w]$. Hence, 
\begin{equation}\label{Hp.above}
\mu_p(\cH_v(s))
\ge \pr(T \in J_\beta) = 
e^{\Lambda(\beta)} \e_\beta \big[ e^{-\beta T} \ind(T \in J_\beta) \big] 
\ge e^{\Lambda(\beta)-\beta(s+2w)} \pr_{\beta} (T \in J_\beta) \,. 
\end{equation}
Combining \eqref{st-comp} at $t=s+2w$ with  
\eqref{tpge} and \eqref{Hp.above}, we again get \eqref{eq:HP-lb}.
\end{proof}

\begin{proof}[Proof of Theorem \ref{thm:lam1.quant}]
Starting with the lower bound in \eqref{PF:bounds}, 
let $\cV\subset \sphere^{\Verts-1}$ be a Euclidean $\delta$-net of size at most $(3/\delta)^\Verts$ (for example take $\rnk=1$ in \eqref{bd:Vnet}).
Note that for all $X\in \Sym_\Verts(\R)$, 
\begin{align}\label{g-cover}
g_B(X) = & \sup_{u,v\in \sphere^{\Verts-1}}\langle u, (X+B)v\rangle 
\\
&\ge \max_{u,v\in \cV} \langle u, (X+B) v\rangle \ge (1-3\delta) g_B(X) \,.
\label{gghat}
\end{align}
Indeed, for \eqref{gghat},
supposing that $u_\star=u_\star(X),v_\star=v_\star(X)$ attain the supremum in 
\eqref{g-cover}, 
there exist $\tilde{u},\tilde{v}\in \cV$ with $\|\tilde{u}-u_\star\|_2, \|\tilde{v}-v_\star\|_2 \le \delta$, whence
\begin{align*}
\langle  \tilde{u}, (X+B) \tilde{v}\rangle 
\ge \langle u_\star, (X+B) v_\star\rangle- (2\delta+\delta^2) \|X+B\|_\op \ge (1-3\delta)g_B(X) \,.
\end{align*}
Further, from 
\eqref{g-cover}, each super-level set $\cL_\ge (g_B,s)$, $s \ge 0$, 
is the union of the closed half-spaces
$\cH_{u,v}(s):= \{ X\in \Sym_\Verts(\R): \langle u , (X+B) v\rangle \ge s\}$
over $u,v\in \sphere^{\Verts-1}$. Consequently,
\begin{equation}\label{sup-level-gB}
\phi_p(g_B,s) = I_p(\cL_\ge(g_B,s)) = 
\inf_{u,v\in \sphere^{\Verts-1}} I_p\big(\cH_{u,v}(s)\big) \,.
\end{equation}
Thus, with $s=(1-3\delta) t$, applying \eqref{gghat}, the union bound and Proposition \ref{prop:cvx}
yields
\begin{align*}
\pr(g_B(\Anp)\ge t) &
\le \sum_{u,v\in \cV} \mu_p\big( \cH_{u,v} (s) \big)
\le |\cV|^2 \max_{u,v\in \cV} \big\{ e^{-I_p(\cH_{u,v}(s))} \big\}
\le |\cV|^2 e^{ -\phi(g_B,s)} \,.
\end{align*}
The lower bound in \eqref{PF:bounds} follows from substituting the bound on $|\cV|$ and taking logarithms.

Viewing $\Sym_\Verts^0(\R)\cong \R^d$ for $d={\Verts\choose2}$, 
we see that $\cH_{u,v}(t) = \cH_{y} (t-\langle u,Bv\rangle)$ 
for $\cH_y(\cdot)$ of \Cref{lem:HP-lb}, where 
$y \ne 0$ is the upper-triangular part of $u v^\tran + v u^\tran$. It is 
easy to check that 
\[
\|y\|_2^2 \le \|u\|_2^2 \|v\|_2^2 + \langle u,v \rangle^2 \le 2 \,,
\]
hence from Lemma \ref{lem:HP-lb},  
we have that
\begin{equation}	\label{cH-uplow}
 \mu_p\big( \cH_{u,v}(t) \big) \ge \frac12 \expo{ -I_p\big(\cH_{u,v}(t+2)\big)}\,.
\end{equation}
Now from 
the identities \eqref{g-cover}, \eqref{sup-level-gB}
and the bound \eqref{cH-uplow} we have
\begin{align*}
\pr(g_B(\Anp)\ge t) &
\ge \sup_{u,v\in \sphere^{\Verts-1}} \mu_p\big( \cH_{u,v}(t) \big)
\ge \frac12 \sup_{u,v\in \sphere^{\Verts-1}}
\big\{ e^{-I_p( \cH_{u,v}(t+2) )}  \big\}
= \frac12 e^{ -\phi_p(g_B,t+2)}\,,
\end{align*}
and the upper bound in \eqref{PF:bounds} follows.
\end{proof}

\section{Lower tails: proofs of Theorems \ref{thm:Sidorenko} and 
\ref{thm:Schatten.lower}}\label{sec:lower}

In proving Theorems \ref{thm:Sidorenko} and \ref{thm:Schatten.lower} we 
set for $r \in (0,1)$, $\alpha\in[2,\infty]$ and $\eps>0$,
\begin{equation}	\label{def:Bqe}
\cB_r(\eps,\alpha) := \{ X\in \cX_\Verts: \|X - r\jay_\Verts\|_{S_{\alpha}} \le \eps r\Verts \} 
\end{equation}
and employ the following consequence of \Cref{lem:LB.ball}.
\begin{lemma}
\label{lem:tilt}
Let $\alpha\in[2,\infty]$ and $\Verts^{-1} \log\Verts \le r<p\le1/2$. If 
\begin{equation}	\label{tilt:eps}
\eps \ge \frac{C_1 \Verts^{1/\alpha}}{\sqrt{\Verts r}} 
\end{equation}
for $C_1$ of Lemma \ref{lem:gap}, then 
\begin{equation}\label{lbd-alpha-ball}
\mu_p( \cB_r(\eps,\alpha))  \ge \frac{1}{2} 
e^{-{\Verts\choose2} I_p(r)} e^{-2 \Verts \sqrt{2p}} \,.
\end{equation}
\end{lemma}
\begin{proof} Since
$\|\cdot\|_{S_\alpha} \le \Verts^{1/\alpha} \|\cdot\|_{\op}$,
thanks to condition \eqref{tilt:eps},  
\[
\{ X\in \cX_\Verts \, : \|X - r\jay_\Verts\|_\op \le C_1 \sqrt{\Verts r} \} 
\subseteq \cB_r(\eps,\alpha) \,,
\]
so \eqref{lbd-alpha-ball} is an immediate consequence of \Cref{lem:LB.ball} for 
the special case of $Q=r \jay_\Verts$.
\end{proof}

\begin{proof}[Proof of Theorem \ref{thm:Sidorenko}]
We first establish \eqref{Sido:upper}.
The event on the \abbr{lhs} 
of \eqref{Sido:upper} is $t(H,f_\Anp) \le {\widehat q}^{|E|}$, which by 
the Sidorenko property \eqref{Sidorenko} 
is contained in the event
$t_{K_2}(f_\Anp) \le {\widehat q}$. The latter is the restriction  
\[
\sum_{1 \le i<j\le \Verts} A_{ij} \le \tiny{\Verts\choose2} q \,.
\]
Since the \abbr{lhs} has the $\Bin({\Verts\choose2}, p)$ distribution, the claim follows from a classical result for tails of the binomial distribution (or one can apply \Cref{prop:lower-cvx} and follow the lines after 
\eqref{Schatten.lower-mono} in the proof of \Cref{thm:Schatten.lower} below).

Turning to the lower bound \eqref{Sido:lower}, recall that 
$\hom(F, r \jay_\Verts) \le \Verts^{\verts(F)} r^{\edges(F)}$ for any subgraph $F$
and $r\in (0,1)$. Hence, \Cref{prop:countingB}
applies with $K=1$ and
$r$ in place of $p$, 
for $\cB=\cB_r(\eps,\infty)$ and any $\eps = \frac{\eps_0}{2} r^{\Delta_\star-1}$,
$\eps_0 \le 1$. Thus, for some $C=C(\edges)$ and any such $\eps$,
\[
\sup_{X\in \cB_r(\eps, \infty)} \;
\big| \hom(H, X) - \hom(H, r\jay_\Verts) \big| \le \eps_0 C \, r^\edges \Verts^\verts \,,
\]
implying by the triangle inequality that for all $X \in \cB_r(\eps,\infty)$,
\[
\hom(H, X) \le \hom(H, r\jay_\Verts) + \eps_0 C r^\edges \Verts^\verts \le (1+ \eps_0 C) r^\edges \Verts^\verts \,.
\]
For $r= \wh{q}/(1+\eps_0 C)^{1/\edges}$ the \abbr{rhs} is at most 
$\wh{q}^{~ \edges} \Verts^\verts$, hence
\[
\{ X\in \cX_\Verts: \hom(H, X) \le \wh{q}^{~ \edges}\Verts^\verts \} \supseteq \cB_r(\eps, \infty)\,.
\]
Thanks to our assumption that $p^{\Delta_\star-1} \gg 1/\sqrt{\Verts p}$,
taking $q=sp$ for fixed $s\in (0,1)$, \Cref{lem:tilt} applies for 
$\alpha=\infty$ and some $\eps_0(\Verts) \to 0$ (such that $\eps \ge C_1/\sqrt{\Verts r}$), giving
\begin{align*}
\pr( \hom(H, \Anp) \le \wh{q}^{~ \edges}\Verts^\verts ) \ge \mu_p(\cB_r(\eps, \infty)) 
 \ge \frac{1}{2} e^{-{\Verts\choose2} I_p(r) - 2 \Verts \sqrt{2p}} \,.
\end{align*}
This completes the proof, since $I_p(r)/I_p(s p) \to 1$ and 
$p^{-1} I_p(s p)$ is bounded away from zero for such $p=p(\Verts)$ and $r=r(\Verts)$.
\end{proof}

\begin{proof}[Proof of \Cref{thm:Schatten.lower}]
We first prove \eqref{Schatten.upper}.
The first inequality is a direct consequence of  \Cref{prop:lower-cvx}. For the second inequality in \eqref{Schatten.upper}
it suffices to show that
\begin{equation}	\label{Schatten.lower:goal}
\inf\big\{ I_p(X): X\in \cX_\Verts, \, \|X\|_{S_\alpha} \le (\Verts-1)q \big\} \ge {\Verts\choose2} I_p(q).
\end{equation}
If $X\in \cX_\Verts$ is such that $ \|X\|_{S_\alpha} \le (\Verts-1) q$, then by the monotonicity of $\beta\mapsto \|\cdot \|_{S_\beta}$
\begin{equation}	\label{Schatten.lower-mono}
\frac{1}{{\Verts\choose2}}\sum_{1 \le i<j\le \Verts} X_{ij} = \frac1{\Verts(\Verts-1)}\1^\tran X\1 \le \frac1{\Verts-1} \|X\|_\op \le \frac1{\Verts-1}\|X\|_{S_{\alpha}} \le q \,.
\end{equation}
Since $I_p(\cdot)$ is convex on $[0,1]$ and decreasing on $[0,p]$, it follows
from 
\eqref{Schatten.lower-mono} that 
\[
\frac1{{\Verts\choose2}} I_p(X) =\frac1{{\Verts\choose2}}  \sum_{1 \le i < j \le \Verts} I_p(X_{ij}) \ge I_p\Big( \frac1{{\Verts\choose2}}  \sum_{1 \le i < j \le \Verts} X_{ij} \Big)  \ge I_p(q) \,,
\]
for all $X\in \cX_\Verts$ such that $ \|X\|_{S_{\alpha}} \le (\Verts-1)q$. This yields \eqref{Schatten.lower:goal} and thereby \eqref{Schatten.upper}.

Turning to the lower bound \eqref{Schatten.lower}, by the 
triangle inequality and monotonicity of $\beta \mapsto \|\cdot\|_{S_\beta}$,
we have that for any $r \in (0,1)$, $\alpha \in [2,\infty]$ and $X\in \cB_r(\eps, \alpha)$,
\[
\|X\|_{S_\alpha} \le r \|\jay_\Verts\|_{S_\alpha} + \|X-r\jay_\Verts\|_{S_\alpha} 
\le r \|\jay_\Verts\|_{\HS} +  \eps r\Verts  \le (1+\eps) r\Verts \,.
\]
For $r=q/(1+\eps)^2$ and $\eps \ge 1/(\Verts-1)$ the \abbr{rhs} is at 
most $q (\Verts-1)$, hence
\[
\{ X\in \cX_\Verts : \|X\|_{S_\alpha} \le q (\Verts-1) \} \supseteq \cB_r(\eps,\alpha) \,. 
\]
Taking $q=s p$ for fixed $s \in (0,1)$, thanks to our assumption 
that $\sqrt{\Verts p} \gg \Verts^{1/\alpha}$
(or $\Verts p \gg \log\Verts$ in case $\alpha=\infty$),
Lemma \ref{lem:tilt} 
applies for  
some $\eps=\eps(\Verts) \to 0$. 
The proof then concludes exactly as in the proof of Theorem \ref{thm:Sidorenko}.
\end{proof}

\subsection*{Acknowledgments}

$~$ We thank Sourav Chatterjee, Ronen Eldan, Ofer Zeitouni and Alex Zhai for helpful discussions and for their encouragement.  
We also thank Gady Kozma and Wojciech Samotij for providing us with an early version of their work \cite{KoSa}.
We thank Anirban Basak and the anonymous referee for feedback that improved the accuracy and clarity of presentation, and in particular for pointing out an error in our original argument for \eqref{LF.bd}, which led us to find the simpler and sharper inductive argument presented here.

\bibliographystyle{myalpha}
\bibliography{LDT.bib}

\end{document}